\newtheorem{theorem}{Theorem}[section]
\newtheorem{proposition}{Proposition}[section]
\newtheorem{lemma}{Lemma}[section]
\newtheorem{corollary}{Corollary}[section]
\newtheorem{definition}{Definition}[section]
\newtheorem{remark}{Remark}[section]
\numberwithin{equation}{section} \numberwithin{theorem}{section}
\numberwithin{proposition}{section} \numberwithin{lemma}{section}
\numberwithin{corollary}{section}
\numberwithin{definition}{section} \numberwithin{remark}{section}
\newcommand{\R}{\mathbb{R}}
\newcommand{\N}{\mathbb{N}}
\date{\today}
\title{Minimization of the first eigenvalue for the Lam\'e system}
\author{Antoine Henrot\footnote{Universit\'e de Lorraine, CNRS, Institut Elie Cartan de Lorraine, BP 70239 54506 Vand\oe uvre-l\`es-Nancy Cedex, France ({\tt antoine.henrot@univ-lorraine.fr}).}
\and Antoine Lemenant\footnote{Universit\'e de Lorraine, CNRS, Institut Elie Cartan de Lorraine, BP 70239 54506 Vand\oe uvre-l\`es-Nancy Cedex, France ({\tt antoine.lemenant@univ-lorraine.fr}).}~\footnote{Institut Universitaire de France (IUF)}
\and Yannick Privat\footnote{Universit\'e de Lorraine, CNRS, Institut Elie Cartan de Lorraine, Inria, BP 70239 54506
Vandœuvre-l\`es-Nancy Cedex, France. ({\tt yannick.privat@univ-lorraine.fr}).}~\footnotemark[3]
%\footnote{Institut Universitaire de France (IUF)}
}
\date{\today}
\begin{document}
\maketitle

\begin{abstract}
In this article, we address the problem of determining a domain in $\R^N$ that minimizes the first eigenvalue of the Lamé system under a volume constraint. We begin by establishing the existence of such an optimal domain within the class of quasi-open sets, showing that in the physically relevant dimensions $N = 2$ and $3$, the optimal domain is indeed an open set. Additionally, we derive both first and second-order optimality conditions. Leveraging these conditions, we demonstrate that in two dimensions, the disk cannot be the optimal shape when the Poisson ratio is below a specific threshold, whereas above this value, it serves as a local minimizer. We also extend our analysis to show that the disk is nonoptimal for Poisson ratios $\nu$ satisfying $\nu \leq 0.4$.
\end{abstract}

 \tableofcontents

\section{Introduction}
The Faber–Krahn inequality is one of the most fundamental results in spectral geometry. It states that, among all sets of a given volume (in any dimension), the ball uniquely minimizes the first eigenvalue of the Dirichlet–Laplacian, see \cite{Faber1923}, \cite{Krahn}.
Similar results hold for other boundary conditions. For instance:
\begin{itemize} 
\item the ball maximizes the first non-trivial eigenvalue of the Neumann–Laplacian (Szeg\"o-Weinberger see \cite{Szego}, \cite{Weinberger}),
\item the ball minimizes the first eigenvalue of the Robin-Laplacian (Bossel-Daners (when the boundary parameter is positive) see \cite{Bossel}, \cite{Daners}), 
\item the ball maximizes the first (non-trivial) eigenvalue 
of the Steklov-Laplacian (Brock see\cite{Brock}). 
\end{itemize}
In all of these problems, a volume constraint is imposed. For further discussion on eigenvalue optimization problems, see \cite{henrot06}, \cite{henrot18}.

The question of minimizing or maximizing the first eigenvalue in the case of systems is far less understood. Notably, unlike the scalar cases discussed earlier, the ball does not necessarily serve as the extremal domain. Recent studies have explored this question for the Stokes operator.
It is shown that in three dimensions, the ball does not minimize the first eigenvalue among sets of a given volume. In two dimensions, however, the disk is found to be a local minimizer and is conjectured to be the global minimizer. A numerical investigation of this problem is presented in \cite{li-zhu23}. Another operator that has recently attracted attention in a series of studies is the curl operator, see \cite{Cantarella_2000_Bis,Cantarella_2000},\cite{Gerner23}, 
\cite{enciso-peralta}, \cite{enciso2022optimal}. These studies also examine various properties of potential optimal domains, revealing that the ball does not minimize the first eigenvalue. In \cite{enciso-peralta}, the authors demonstrate, under certain regularity conditions, that an optimal domain cannot possess axial symmetry. Conversely, it is conjectured in \cite{Cantarella_2000_Bis} that the optimal domain is a {\it spheromak}, namely a torus in $\mathbb{R}^3$ with its central hole minimized to form an almost spherical shape, often likened to a cored apple.
%sphere with its north and south poles fused together.

In the recent paper \cite{KLZ24}, the authors investigate the Maxwell operator (or vectorial Laplacian) with the boundary condition $ u \times \nu = 0$. They demonstrate that, in three dimensions, the ball is neither a minimizer nor a maximizer for the first eigenvalue under volume or perimeter constraints. Specifically, the authors show that the infimum of the first eigenvalue is zero, while the supremum is $ +\infty $ under both constraints.

In this article, we focus on the Lamé system with Dirichlet boundary conditions, which is fundamental to the theory of linear elasticity. Let $\Omega$ be a bounded open set in $\R^N$ and $H^1_0(\Omega)^N$ denotes the space of vectors $u=(u_1, \ldots, u_N)$
where each $u_i$ belongs to the Sobolev space $H^1_0(\Omega)$.
The first eigenvalue of $\Omega$ for the Lam\'e system is defined by
 \begin{equation}\label{defLambda}
\Lambda(\Omega):=\min_{u\in H^1_0(\Omega)^N \setminus \{0\}} \frac{\mu\int_{\Omega}|\nabla u|^2 \;dx + (\lambda + \mu) \int_{\Omega} ({\rm div}(u))^2\;dx}{\int_{\Omega} |u|^2 \;dx},
 \end{equation}
 where $\lambda , \mu$ are the Lam\'e coefficients that satisfy $\mu>0,\lambda+\mu >0$. In the previous expression, $|\nabla u|^2$ denotes
 $|\nabla u_1|^2+\ldots |\nabla u_N|^2$ and $|u|^2$ denotes $u_1^2+\ldots u_N^2$.
 The associated PDE solved by the minimizer $u$ is
 \begin{equation}\label{pdeLame0}
 \left\lbrace
 \begin{array}{cc}
 -\mu \Delta u -(\lambda+\mu) \nabla ({\rm div}(u)) = \Lambda u & \mbox{ in } \Omega,\\
 u=0  & \mbox{ on } \partial\Omega .\\
 \end{array}
 \right.
 \end{equation}
 We will explain below that a natural motivation for introducing the first eigenvalue $\Lambda$ arises from the famous Korn inequality. 
In that context, let us mention the paper \cite{Lewicka-Muller} where optimal constants for the Korn inequality are also computed
but under tangential boundary conditions.

 It is worth noting that it will be convenient to introduce the Poisson coefficient $\nu$, as it is related to the Lam\'e parameters through the following formulae:
 \begin{equation}\label{lamepoi}
 \lambda=\frac{E\nu}{(1+\nu)(1-2\nu)},\qquad \mu=\frac{E}{2(1+\nu)}
 \end{equation}
 where $E$ is the Young modulus and $\nu \in (-1,0.5)$ (for many materials $\nu \in [0.2,0.4]$).
 Indeed, dividing the eigenvalue $\Lambda$ by $\mu$ lead us to introduce the ratio 
 $$\frac{\lambda+\mu}{\mu}= \frac{1}{1-2\nu}$$
 and therefore, we can see that the minimization of $\Lambda$ will primarily depend on the Poisson coefficient $\nu$.
 In some papers, like \cite{kawohl-sweers}, the authors look at an eigenvalue defined as
  \begin{equation}\label{defLambda2a}
\Lambda(\Omega,a):=\min_{u\in H^1_0(\Omega)^N \setminus \{0\}} \frac{\int_{\Omega}|\nabla u|^2 \;dx + a \int_{\Omega} ({\rm div}(u))^2\;dx}{\int_{\Omega} |u|^2 \;dx},
 \end{equation}
 where $a$ stands for $1/(1-2\nu)$. To make the underlying physics more apparent, we will explicitly retain the Lam\'e parameters and the Poisson coefficient in our paper.

Thus, this paper is dedicated to the study of the following shape optimization problem: 
\begin{equation}\label{shop1}
\boxed{ \inf\{ \Lambda(\Omega), \Omega\subset \R^N \mbox{ bounded }, |\Omega|=V_0\}}
 \end{equation}
 or equivalently (since $\Lambda(t\Omega)=\Lambda(\Omega)/t^2$) to the unconstrained optimization problem
  \begin{equation}\label{shop2}
 \inf\{|\Omega|^{2/N} \Lambda(\Omega), \Omega\subset \R^N \}.
 \end{equation}
 Here $|\Omega|$ denotes the Lebesgue measure of the open set (or quasi-open set) $\Omega$. The precise definition of quasi-open set
 will be given at the beginning of Section  \ref{secexistence}.

In Section \ref{secexistence}, we first establish an existence result for quasi-open sets in any dimension. To achieve this, we employ the standard strategy based on a concentration-compactness argument, that we had to adapt in a vectorial context. Subsequently, we demonstrate some mild regularity, specifically that the optimal domain is an open set in the physical dimensions 2 and 3. This is accomplished by proving the equivalence of the minimization problem with a penalized problem and introducing the concept of Lamé quasi-minimizers. We then show that these quasi-minimizers are globally H\"older continuous.
Therefore our first important result is
\begin{theorem}\label{theoexis}
There exists a quasi-open set $\Omega^*$ solution of \eqref{shop1} or \eqref{shop2}. Moreover in dimension $N=2$ and $N=3$ this set is open and  any eigenfunction associated with   the eigenvalue $\Lambda(\Omega^*)$   belongs to $\mathscr{C}^{0,\alpha}(\R^N)$ for all $\alpha<1$ if $N=2$, and for all $\alpha<\frac{1}{2}$ if $N=3$.
\end{theorem}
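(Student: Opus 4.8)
The plan is to follow the by now classical three–step strategy for spectral shape optimization --- concentration-compactness, penalization, regularity of quasi-minimizers --- carefully adapted to the vectorial Lamé form, as announced in the introduction.

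\medskip
\emph{Existence.} I would take a minimizing sequence $\Omega_n$ with $|\Omega_n|=V_0$ and $\Lambda(\Omega_n)\to\inf$, pick eigenfunctions $u_n\in H^1_0(\Omega_n)^N$ with $\int_{\R^N}|u_n|^2=1$, and extend them by $0$. Since $\lambda+\mu>0$ forces $\lambda+2\mu>0$, one has $\mu\int|\nabla u_n|^2\le\mu\int|\nabla u_n|^2+(\lambda+\mu)\int({\rm div}\,u_n)^2\le C$, so $(u_n)$ is bounded in $H^1(\R^N)^N$. I then apply Lions' concentration-compactness lemma to the densities $|u_n|^2$. Vanishing is excluded by the volume constraint and Hölder's inequality, $1=\|u_n\|_{L^2}^2\le V_0^{1-2/p}\|u_n\|_{L^p}^2$ for $p\in(2,2^*)$, which contradicts $\|u_n\|_{L^p}\to0$. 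Dichotomy is excluded by the strict scaling identity $\inf\{\Lambda(\Omega):|\Omega|=m\}=m^{-2/N}\inf\{\Lambda(\Omega):|\Omega|=1\}$, which makes a splitting of the mass into two pieces of volumes $m_1,m_2<V_0$ strictly suboptimal, together with the fact that the first Lamé eigenvalue of a disjoint union is the minimum of the two eigenvalues. Hence, up to translation, $u_n\to u$ in $L^2(\R^N)^N$ and weakly in $H^1$, with $\|u\|_{L^2}=1$ and (along a subsequence) $u_n\to u$ a.e. Setting $\Omega^*:=\{|u|\ne0\}$ (quasi-open), Fatou's lemma applied to $\mathbb 1_{\{|u|>\varepsilon\}}\le\liminf_n\mathbb 1_{\Omega_n}$ gives $|\Omega^*|\le V_0$, and weak lower semicontinuity of the Lamé form gives $\Lambda(\Omega^*)\le\inf$; a dilation (using $\Lambda(t\Omega)=\Lambda(\Omega)/t^2$) forces $|\Omega^*|=V_0$, so $\Omega^*$ is optimal.

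\medskip
\emph{Penalization and Lamé quasi-minimizers.} Next I would prove that for some large $m>0$ the set $\Omega^*$ also minimizes $\Omega\mapsto\Lambda(\Omega)+m\big||\Omega|-V_0\big|$ among quasi-open sets, using monotonicity of $\Lambda$ under inclusion and the possibility of slightly enlarging or shrinking a competitor. From this I extract the local property that every eigenfunction $u$ of $\Omega^*$ at the level $\Lambda(\Omega^*)$, normalized in $L^2$, is a \emph{Lamé quasi-minimizer}: for every ball $B_r(x_0)$ and every $v$ with $v-u\in H^1_0(B_r(x_0))^N$,
\[
\int_{B_r}\!\Big(\mu|\nabla u|^2+(\lambda+\mu)({\rm div}\,u)^2\Big)\ \le\ \int_{B_r}\!\Big(\mu|\nabla v|^2+(\lambda+\mu)({\rm div}\,v)^2\Big)+C\Big(r^N+\int_{B_r}|u|^2\Big).
\]
This follows by testing the penalized optimality against the competitor domain $\{|v|\ne0\}$ and estimating the perturbation of the volume ($\le|B_r|$) and of the $L^2$-normalization (which produces the term $\Lambda(\Omega^*)\big(\|v\|_{L^2}^2-\|u\|_{L^2}^2\big)$, hence the summand $\int_{B_r}|u|^2$ once $v$ is kept comparable to $u$ in $L^2(B_r)$).

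\medskip
\emph{Hölder continuity and openness.} I would then show any Lamé quasi-minimizer lies in $\mathscr C^{0,\alpha}_{\rm loc}(\R^N)$ with the claimed exponents. Compare $u$ on $B_r$ with the $a$-harmonic replacement $h$, i.e. the minimizer over $u+H^1_0(B_r)^N$ of the Lamé energy $a_{B_r}$ on $B_r$; since the Lamé operator is strongly elliptic with constant coefficients, $h$ is smooth and enjoys the interior decay $\int_{B_\rho}|\nabla h|^2\le C(\rho/r)^N\int_{B_r}|\nabla h|^2$. The Euler--Lagrange orthogonality gives $a_{B_r}(u-h,u-h)=a_{B_r}(u,u)-a_{B_r}(h,h)$, which combined with the quasi-minimality inequality (applied with $v=h$) yields $\int_{B_r}|\nabla(u-h)|^2\le C\big(r^N+\int_{B_r}|u|^2\big)$. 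Now Sobolev's embedding of $H^1(\R^N)$ gives $\int_{B_r}|u|^2\le Cr^2$ when $N=3$ (only $L^6$ is available) and $\int_{B_r}|u|^2\le C_\varepsilon r^{2-\varepsilon}$ for every $\varepsilon>0$ when $N=2$ ($L^p$ for all $p<\infty$). Writing $\phi(\rho):=\int_{B_\rho}|\nabla u|^2$ and using $\int_{B_r}|\nabla h|^2\le C_0\phi(r)$, one obtains $\phi(\rho)\le C(\rho/r)^N\phi(r)+Cr^{s}$ with $s=2$ if $N=3$ and $s=2-\varepsilon$ if $N=2$; the classical Campanato iteration lemma (the additive exponent being $<N$) gives $\phi(\rho)\le C\rho^{s}$, and Morrey's Dirichlet growth theorem ($u\in\mathscr C^{0,\alpha}$ as soon as $N-2+2\alpha\le s$) yields $u\in\mathscr C^{0,\alpha}$ for every $\alpha<1$ if $N=2$ and every $\alpha<\tfrac12$ if $N=3$; since $u$ has bounded support, this upgrades to a global estimate on $\R^N$. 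Finally, continuity of the first eigenfunction $u$ makes $\omega:=\{|u|\ne0\}$ \emph{open}; one checks $u\in H^1_0(\omega)^N$ and $\omega\subseteq\Omega^*$ quasi-everywhere, so $\Lambda(\omega)=\Lambda(\Omega^*)$, and the penalized optimality forces $|\Omega^*\setminus\omega|=0$. Thus $\omega$ is an open optimal domain of volume $V_0$ with $\Lambda(\omega)=\inf$.

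\medskip
\emph{Main difficulty.} The crux is the regularity step. Because the Lamé operator is a genuine system, none of the scalar tools (maximum principle, sign of the eigenfunction, sub/supersolution comparison --- which in the scalar Faber--Krahn-type problems even give Lipschitz regularity) are available, so the Hölder bound must be obtained purely variationally; one must verify that the competitor domain $\{|v|\ne0\}$ is admissible and that every error term is genuinely of lower order, and it is precisely the weaker Sobolev embedding in dimension three that degrades the exponent from (almost) $1$ to (almost) $\tfrac12$. A secondary delicate point is the concentration-compactness analysis in the vectorial framework, in particular ruling out dichotomy.
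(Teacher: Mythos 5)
Your proposal is correct in outline and follows the same three-step architecture as the paper: concentration--compactness for existence in the class of quasi-open sets, equivalence with a penalized problem, the notion of Lam\'e quasi-minimizer, comparison on balls with the solution of the homogeneous Lam\'e system (strongly elliptic, hence with interior decay), a Campanato-type iteration and Morrey embedding giving $\mathscr{C}^{0,\alpha}$ with exponent governed by $\gamma=N-\tfrac{2N}{p}$ from the Sobolev embedding (so $\alpha<1$ for $N=2$, $\alpha<\tfrac12$ for $N=3$), and openness of $\{|u|>0\}$ by continuity. The genuine differences are in the sub-arguments. You rule out vanishing by the H\"older/volume-constraint argument and dichotomy by strict subadditivity of $m\mapsto \inf_{|\Omega|\le m}\Lambda(\Omega)=c\,m^{-2/N}$, whereas the paper cites a lemma for vanishing and treats dichotomy by a case analysis (either the ``compact'' piece attains the minimum and one directly extracts an optimal set, or the limit of that piece produces a strict energy excess and a contradiction); your scaling route is the more classical one, but note that the bare ``first eigenvalue of a disjoint union is the minimum'' estimate does not suffice, since one piece may have volume arbitrarily close to $V_0$ while the other is tiny: you need the weighted form of the argument, using that the two pieces have total volume $\le V_0$ (so one has volume $\le V_0/2$) while each carries a fixed positive fraction $\alpha$, $1-\alpha$ of the $L^2$ mass, together with the near-splitting of the vectorial energy under cut-offs, which you rightly flag as the delicate point. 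Your penalization is at the level of domains with a two-sided volume penalty; the paper penalizes at the level of functions, with one-sided penalties on the volume excess and on the $L^2$-mass deficit, which makes the resulting local quasi-minimality valid for arbitrary competitors without your caveat that $v$ be comparable to $u$ in $L^2(B_r)$ (your caveat is harmless because the only competitor you use is the Lam\'e replacement, whose $L^2(B_r)$ norm is controlled via Poincar\'e). Finally, the globalization of the H\"older estimate should not invoke ``$u$ has bounded support'': boundedness of the optimal set is not known (the paper explicitly remarks that $u$ need not even be bounded); the global seminorm bound follows instead from the fact that the quasi-minimality, and hence the decay estimates, hold with constants uniform over all balls of radius $\le 1$.
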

In Section \ref{secoptim}, we derive first and second order optimality conditions by calculating the first and second shape derivatives of the eigenvalue. These computations prove to be particularly useful in the subsequent Section \ref{secdisk}, where we examine the potential optimality of the disk in two dimensions. In this context, we are able to prove:
\begin{theorem}
If the Poisson coefficient $\nu$ is less than $0.4$, the disk {\bf is not} the minimizer of $\Lambda$ (among sets of given area).
\end{theorem}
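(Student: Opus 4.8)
The plan is to prove that the disk violates the second order necessary optimality condition obtained in Section~\ref{secoptim}, for every $\nu\le 0.4$. By scaling we may take $B=B_1$ to be the unit disk, since $\Lambda(tB)=\Lambda(B)/t^2$ and only the volume is constrained. The first task is to make the first eigenvalue of $B$ and the corresponding eigenfunctions completely explicit. Decomposing $u=\nabla\Phi+\nabla^{\perp}\Psi$ (Helmholtz), equation~\eqref{pdeLame0} forces $-(\lambda+2\mu)\Delta\Phi=\Lambda\Phi$ and $-\mu\Delta\Psi=\Lambda\Psi$, so that $\Phi$ and $\Psi$ are Helmholtz eigenfunctions with wavenumbers $k_1=\sqrt{\Lambda/(\lambda+2\mu)}$ and $k_2=\sqrt{\Lambda/\mu}$, with $k_1<k_2$ since $\lambda+\mu>0$. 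Expanding $\Phi=\alpha\,J_n(k_1r)e^{in\theta}$, $\Psi=\beta\,J_n(k_2r)e^{in\theta}$ and imposing $u=0$ on $\partial B$ gives, for each angular mode $n$, the characteristic equation $k_1k_2\,J_n'(k_1)J_n'(k_2)=n^2J_n(k_1)J_n(k_2)$, and $\Lambda(B)$ is the smallest eigenvalue produced over all $n$. I would then identify the minimizing mode: I expect it to be the purely rotational one, $n=0$ with $\alpha=0\ne\beta$ and $k_2=j_{1,1}$ (first zero of $J_1$), giving the \emph{simple} eigenvalue $\Lambda(B)=\mu j_{1,1}^2$ with eigenfunction a swirl proportional to $J_1(k_2r)$ in the azimuthal direction; note that for this field the normal derivative of $u$ is tangential and of constant modulus on $\partial B$, so $B$ is automatically a critical point of $\Lambda$ under the volume constraint.

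Next I would specialize the second shape derivative of Section~\ref{secoptim} to $\Omega=B$ and to a volume-preserving deformation whose normal trace on $\partial B$ is $h(\theta)=\sum_{k\ge1}(a_k\cos k\theta+b_k\sin k\theta)$. The rotational invariance of $B$ diagonalizes the resulting quadratic form over Fourier modes, so that the second derivative of the eigenvalue along the deformation equals $\sum_{k\ge1}q_k(\nu)\,(a_k^2+b_k^2)$ for coefficients $q_k(\nu)$ obtained by inserting the explicit swirl eigenfunction; since that eigenfunction only involves the universal number $j_{1,1}$, each $q_k(\nu)$ is an elementary function of $\nu$ (the Poisson ratio entering through the Lamé coefficients in the shape derivative formula). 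The disk fails to be a local minimizer under the volume constraint as soon as $q_k(\nu)<0$ for some $k$; this is the mechanism behind the threshold $\nu_0$ of the statement, the reverse inequality for all $k$ giving local minimality beyond $\nu_0$. (Should the minimizing mode identified above instead be a double $n=1$ eigenvalue on part of the range, the same analysis has to be run on $t\mapsto\min_j\Lambda_j(t)$, after checking that the first order splitting of the branches vanishes on $B$ by parity of the eigenfunctions.)

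It then remains to exhibit one fixed small mode $k$ — I expect $k=2$ — with $q_k(\nu)<0$ for every $\nu\in(-1,0.4]$, which is the heart of the proof. With $q_k(\nu)$ explicit, this reduces to an algebraic inequality in $\nu$ on the interval, which I would settle by evaluating $q_k(0.4)$ and controlling the sign (or monotonicity) of $q_k$ on $(-1,0.4)$. Once this is done, the second order condition forces the volume-constrained functional to strictly decrease near $B$, hence $B$ is not a minimizer of \eqref{shop1}, a fortiori not a global one.

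The main difficulty is concentrated in the computation and sign analysis of $q_k(\nu)$: even with a simple and fully explicit eigenfunction, the second shape derivative of the Lamé eigenvalue is an intricate boundary integral (the divergence term couples the components), so obtaining a formula for $q_k(\nu)$ clean enough to certify $q_k(\nu)<0$ uniformly over $\nu\le0.4$ is where the real work lies; a secondary technicality is the bookkeeping of the eigenvalue multiplicity in the variant where $\Lambda(B)$ is not simple. If it turned out that $B$ is a genuine local minimizer on some subinterval up to $0.4$ (that is, $\nu_0<0.4$), the conclusion there would instead have to come from an explicit competitor domain $\Omega$ together with an admissible vector field making the Rayleigh quotient in \eqref{defLambda} drop below $\Lambda(B)$, uniformly in $\nu$ — producing such a pair would then be the principal obstacle.
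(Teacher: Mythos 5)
There is a genuine gap, and it sits exactly where you located the ``heart of the proof''. Your central mechanism is to find a Fourier mode $k$ with $q_k(\nu)<0$ in the second shape derivative at the disk, for all $\nu\le 0.4$. This cannot work on a large part of the range: the paper's own second-order analysis (Section~\ref{seconddisk}) shows that as soon as the first eigenvalue of the disk is simple, i.e.\ for $\nu>\nu^*$ with $\nu^*=\frac{j_{1,1}^2-2{j'_{1,1}}^2}{2j_{1,1}^2-2{j'_{1,1}}^2}\simeq 0.3499$, all the coefficients $C_k$ ($k\ge 2$) in the diagonalized quadratic form of $d^2\big(|\Omega|\Lambda(\Omega)\big)$ are strictly positive, and in fact the form is coercive in a weak $H^1$ norm of the perturbation; the disk is a \emph{local} minimizer there. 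So on $(\nu^*,0.4]$ no choice of $k$ gives $q_k(\nu)<0$, and the threshold $0.4$ in the statement is not of local origin at all. Your fallback (``an explicit competitor domain would then be needed'') is precisely what the paper has to construct, and it is the substantial part of the argument: rhombi adapted to the Lam\'e coefficients, for which an eigenpair of the form $(u,u)$ is explicit and gives $\Lambda=2\pi^2\sqrt{\mu(\lambda+2\mu)}/|\Omega|$, beat the disk for $\nu<\frac{j_{1,1}^4-8\pi^2}{2(j_{1,1}^4-4\pi^2)}\simeq 0.3879$; and for $\nu\in[3/8,2/5]$ a rectangle of aspect ratio $t=2/5$ is shown to beat $\mu j_{1,1}^2$ by testing the Rayleigh quotient on a four-dimensional space spanned by two Laplacian eigenfunctions in both components, the divergence term producing the decisive off-diagonal coupling in a $4\times4$ matrix whose smallest eigenvalue is estimated below $j_{1,1}^2$. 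None of this is present in your proposal, so the range $(\nu^*,0.4]$ is left unproved.

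Two further points on the range $\nu<\nu^*$. First, your identification of the first eigenvalue of the disk as the simple swirl eigenvalue $\mu j_{1,1}^2$ is only correct for $\nu\ge\nu^*$; below that threshold the first eigenvalue is a root of the transcendental equation $\omega_1\omega_2 J_k'(\omega_1)J_k'(\omega_2)=k^2J_k(\omega_1)J_k(\omega_2)$, it is strictly smaller than $\mu j_{1,1}^2$ and at least double. Second, in that multiple case the paper does \emph{not} pass to second order: the directional derivative of the smallest eigenvalue is the least eigenvalue of the matrix $\mathcal{M}_{i,j}=-\int_{\partial\Omega}\bigl[\mu\,\nabla u^i:\nabla u^j+(\lambda+\mu)\operatorname{div}u^i\operatorname{div}u^j\bigr]V\cdot n$, and choosing $V\cdot n=\alpha\cos(2k\theta)$ (area-preserving) makes $\mathcal{M}_{1,1}$ nonzero of the sign one wants, so the disk already fails the \emph{first}-order condition. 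Your parenthetical plan (check that the first-order splitting vanishes by parity, then run a second-order analysis on $\min_j\Lambda_j$) has the mechanism backwards: the splitting does not vanish for this perturbation, and its nonvanishing is exactly what rules out the disk for $\nu<\nu^*$. So even on the sub-threshold range your outline would need to be redirected, and above the threshold it must be replaced by explicit competitors.
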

The proof involves several steps. First, we explicitly compute the first eigenvalue of the disk, which is a non-trivial task, and show that if $\nu \leq 0.349\ldots$, the first eigenvalue is double. This finding allows us, through a straightforward variational argument, to conclude that the disk cannot be optimal in this case. It is worth noting that the value $0.349\ldots$ is explicitly related to the first zero of the Bessel function $J_1$ and its derivative.

Next, in Section \ref{secrhombi}, we identify explicit rhombi that yield a better first eigenvalue than the disk for the range $0.349\ldots \leq \nu \leq 0.3878\ldots$. We further extend our analysis by considering suitable rectangles in Section~\ref{secrectangle}.

For these rectangles, we cannot compute the first eigenvalue explicitly, but we can provide precise estimates through a clever choice of test functions. This approach allows us to rule out the disk as a possible minimizer for $\nu \leq 0.4$. For values of $\nu$ between $0.4$ and $0.5$, we cannot reach a conclusion using analytical arguments. However, in Section \ref{seconddisk}, we demonstrate that once the first eigenvalue is simple (i.e., for $\nu > 0.349\ldots$), the disk serves as a local minimizer for our problem. This is established by showing that the second shape derivative is non-negative, and we can also estimate the quadratic form using the $H^1$-norm of the perturbation.

Finally, we present heuristic arguments suggesting that there exists a threshold $\nu^*$ such that the disk could be a minimizer when $\nu^* \leq \nu < 0.5$. This conclusion is based on the property that the Lamé eigenvalue $\Gamma$-converges to the Stokes eigenvalue as $\nu \to 1/2$, in conjunction with the previously established local minimality of the disk.

\section{Motivation and elementary comparisons}
  \subsection{Reminders on the Korn inequalities}
  Let $\Omega$ denote any bounded open set in $\R^N$. 
For $u:\Omega \subset \R^N \to \R^N$ we denote by $e(u)$ the symmetric gradient defined by 
 $$e(u):=\frac{\nabla u+\nabla u^T}{2}.$$
 
%The space of functions $u:\Omega \subset \R^N \to \R^N$ such that $e(u)\in L^2(\Omega)$ is usually denoted by $LD(\Omega)$. Depending on the regularity of $\Omega$, it may not coincide with $H^1(\Omega)$ (for which $\nabla u \in L^2$). This is  actually a consequence of the famous Korn inequality.
Let us recall two standard Korn inequalities.
  
  \begin{theorem}\label{theokorn} 
For all  $u \in H^1_0(\Omega)^N$, one has
%  \begin{enumerate}
%  \item (Korn)
  \begin{equation} \tag{\text{Korn}}  \|\nabla u\|_{L^2(\Omega))}\leq 2 \| e(u)\|_{L^2(\Omega)}, \label{korn1}
  \end{equation}
  and
 % \item (Poincar\'e-Korn)  
  \begin{equation}\tag{\text{Poincar\'e-Korn}}
\|u\|_{L^2(\Omega))}\leq C(\Omega) \| e(u)\|_{L^2(\Omega)}. \label{korn2}
  \end{equation}
   %   \end{enumerate}
      Moreover we can take $C(\Omega)= 2/\lambda_1^D(\Omega)$, where $\lambda_1^D(\Omega)$ is the usual scalar first eigenvalue for the Dirichlet Laplacian.
  \end{theorem}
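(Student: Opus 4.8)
The plan is to reduce both estimates to a single integration-by-parts identity that is special to the class $H^1_0(\Omega)^N$. By density of $\mathscr{C}^\infty_c(\Omega)^N$ in $H^1_0(\Omega)^N$ together with the continuity of $u\mapsto\|u\|_{L^2}$, $u\mapsto\|\nabla u\|_{L^2}$ and $u\mapsto\|e(u)\|_{L^2}$ for the $H^1$-topology, it is enough to prove \eqref{korn1} and \eqref{korn2} for a smooth, compactly supported field $u$; extending such $u$ by zero to $\R^N$ we may then integrate by parts over $\R^N$ with no boundary contributions. The first step is to expand
\[
\int_\Omega|e(u)|^2\,dx=\frac14\sum_{i,j=1}^N\int_{\R^N}\bigl(\partial_i u_j+\partial_j u_i\bigr)^2\,dx
=\frac12\int_{\R^N}|\nabla u|^2\,dx+\frac12\sum_{i,j=1}^N\int_{\R^N}\partial_i u_j\,\partial_j u_i\,dx ,
\]
and to compute the cross term by a double integration by parts, $\int\partial_i u_j\,\partial_j u_i\,dx=-\int u_j\,\partial_i\partial_j u_i\,dx=\int\partial_j u_j\,\partial_i u_i\,dx$, so that $\sum_{i,j}\int\partial_i u_j\,\partial_j u_i=\int({\rm div}\,u)^2\ge 0$. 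This yields the identity
\[
\|e(u)\|_{L^2(\Omega)}^2=\tfrac12\|\nabla u\|_{L^2(\Omega)}^2+\tfrac12\|{\rm div}\,u\|_{L^2(\Omega)}^2,
\]
whence $\|\nabla u\|_{L^2(\Omega)}\le\sqrt2\,\|e(u)\|_{L^2(\Omega)}\le 2\,\|e(u)\|_{L^2(\Omega)}$, i.e. \eqref{korn1} (with in fact the sharper constant $\sqrt2$).

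For \eqref{korn2}, the second step is to invoke the variational characterization of the scalar Dirichlet eigenvalue, $\lambda_1^D(\Omega)=\min\{\|\nabla v\|_{L^2(\Omega)}^2/\|v\|_{L^2(\Omega)}^2:\ v\in H^1_0(\Omega)\setminus\{0\}\}$, which is positive since $\Omega$ is bounded. Applying it to each scalar component $u_i\in H^1_0(\Omega)$ and summing over $i$ gives $\lambda_1^D(\Omega)\,\|u\|_{L^2(\Omega)}^2\le\|\nabla u\|_{L^2(\Omega)}^2$. Combining this with the identity above produces $\|u\|_{L^2(\Omega)}^2\le\frac{2}{\lambda_1^D(\Omega)}\,\|e(u)\|_{L^2(\Omega)}^2$, which is \eqref{korn2} with the announced value $C(\Omega)=2/\lambda_1^D(\Omega)$ (read for the squared norms; equivalently $C(\Omega)=\sqrt{2/\lambda_1^D(\Omega)}$ for the norms themselves). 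The general case of $u\in H^1_0(\Omega)^N$ then follows by passing to the limit along a sequence of smooth compactly supported approximations.

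There is essentially no genuine obstacle here: the result is classical and the computation is elementary. The only points deserving (routine) care are the density argument and the verification that the double integration by parts carries no boundary term, both of which are immediate once $u$ is taken smooth with compact support in $\Omega$ and extended by zero. I would, however, flag explicitly that the clean identity above — and hence the simple explicit constants $\sqrt2$ and $2/\lambda_1^D(\Omega)$ — rely crucially on the homogeneous Dirichlet condition: Korn's inequality on the full space $H^1(\Omega)$ (modulo infinitesimal rigid displacements) is far more delicate and depends on the regularity of $\partial\Omega$, but is not needed in this article.
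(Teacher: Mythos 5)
Your proof is correct and follows essentially the same route as the paper: the integration-by-parts identity $\|e(u)\|_{L^2(\Omega)}^2=\tfrac12\left(\|\nabla u\|_{L^2(\Omega)}^2+\|\operatorname{div}u\|_{L^2(\Omega)}^2\right)$ for smooth compactly supported fields (then density), followed by the componentwise scalar Poincar\'e inequality. Your remark that the constant $C(\Omega)=2/\lambda_1^D(\Omega)$ is to be read at the level of squared norms is also consistent with what the paper's own chain of Rayleigh-quotient inequalities actually establishes.
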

 
 \begin{proof}
 An elementary  ``integration by parts'' reveals  that the following identity is always true for any  $u \in C^\infty_c(\Omega, \R^N)$,
 $$\int_{\Omega}|e(u)|^2 dx=\frac{1}{2}\left(\int_{\Omega} |\nabla u|^2 + ({\rm div}(u))^2\right).$$
Inequality \eqref{korn1} follows immediately. Notice that here the constant does not depend on $\Omega$. 
 
Now for a proof  of \eqref{korn2}, we apply \eqref{korn1} and the following Poincaré inequality,   for a scalar function $v \in H^1_0(\Omega))$:
 
 $$\lambda_1(\Omega)\int_{\Omega} v^2  \; dx \leq   \int_{\Omega}  |\nabla v|^2 \; dx.$$
 We deduce that, for $u=(u_1,u_2,\ldots u_N)$:
 
 \begin{eqnarray}
 \frac{\int_{\Omega} |e(u)|^2 \;dx }{\int_{\Omega} u^2 \;dx}&=&\frac{\frac{1}{2}\left(\int_{\Omega} |\nabla u|^2 + ({\rm div}(u))^2\right)}{\int_{\Omega}\sum_{i=1}^N (u_i)^2 }  \geq \frac{\frac{1}{2}\left(\int_{\Omega} \sum_{i=1}^N |\nabla u_i|^2 \right)}{\int_{\Omega} \sum_{i=1}^N (u_i)^2}  \notag \\
&\geq &  \min_{i=1,2,\ldots N} \left( \frac{\frac{1}{2}\int_{\Omega} |\nabla u_i|^2}{\int_{\Omega}(u_i)^2}  \right) \geq \frac{1}{2}\lambda_1(\Omega). \notag 
 \end{eqnarray}  
 
\end{proof}

 Therefore, looking at the best constant in the \eqref{korn2} leads us to compute the eigenvalue $\Lambda$ defined in \eqref{defLambda} for the particular choice $\mu=1/2$ and $\lambda=0$.

 \subsection{Link with other eigenvalues}
\subsubsection{Link with the eigenvalues of the Stokes operator}\label{remark1} 
For $\Omega$, a bounded open set, let us introduce the so-called {\it Dirichlet Stokes first eigenvalue} $\lambda_1^{\rm Stokes}(\Omega)$ by
$$
\lambda_1^{\rm Stokes}(\Omega):= \inf_{\substack{u \in (H^1_0(\Omega))^N\setminus \{0\} \\ {\rm div}(u)=0\text{ in }\Omega}} 
\frac{\int_{\Omega} |\nabla u|^2 } {\int_{\Omega} |u|^2 }.
$$
Then for all $\Omega$ it holds
\begin{equation}\label{comp:StKorn}
\mu \lambda_1^{\rm Stokes}(\Omega) \geq  \Lambda(\Omega).
\end{equation}
Indeed this inequality comes from the fact that the space on which we minimize is a subspace of $H^1_0(\Omega)^N$ on which the energy coincides with our Rayleigh quotient.

In some sense, the divergence term may appear as a penalization term, in particular when the Poisson coefficient goes to $1/2$
(or equivalently when the Lam\'e coefficients are such that $(\lambda + \mu)/\mu \to +\infty$). We will make this more precise in Section
\ref{secconclusion} by proving the strong convergence of the Lam\'e operator to the Stokes operator when $\nu \to 1/2$.
For that purpose, we will use the tool of $\Gamma$-convergence.

 %%%%%%%%%%%%%%%%%%%%%%%%%%%%%%%%%%%%%%%%%%%%%%%%%%%%%%%%%

 \subsubsection{Comparison with Dirichlet Eigenvalues}
 
 In this section we retrieve some results that already appeared, for example in \cite{kawohl-sweers}. We recall that 
 $\lambda_1^D(\Omega)$ denotes the first eigenvalue of the Dirichlet-Laplacian.
 
\begin{proposition} \label{boundlambda1} For any bounded domain $\Omega$ it holds
\begin{eqnarray}
\mu \lambda_1^D(\Omega) <\Lambda(\Omega) \leq \frac{(\lambda + (N+1)\mu)}{N} \, \lambda_1^D(\Omega).\label{ineqlambda1}
\end{eqnarray}
Moreover, 
$$\inf_{\Omega} \frac{\Lambda(\Omega)}{\lambda_1^D(\Omega)}=\mu  ,$$
and is achieved by a sequence of thin cuboids shrinking to a line.
\end{proposition}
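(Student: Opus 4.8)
The plan is to prove the three assertions separately: the lower bound $\mu\lambda_1^D(\Omega)<\Lambda(\Omega)$, the upper bound $\Lambda(\Omega)\le\frac{\lambda+(N+1)\mu}{N}\lambda_1^D(\Omega)$, and the sharpness of the lower bound via thin cuboids.

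For the lower bound, I would start from the Rayleigh quotient in \eqref{defLambda}. Since $\mu\int_\Omega|\nabla u|^2+(\lambda+\mu)\int_\Omega(\mathrm{div}\,u)^2\ge\mu\int_\Omega|\nabla u|^2$ (using $\lambda+\mu>0$), and since $\int_\Omega|\nabla u|^2=\sum_i\int_\Omega|\nabla u_i|^2\ge\lambda_1^D(\Omega)\sum_i\int_\Omega u_i^2=\lambda_1^D(\Omega)\int_\Omega|u|^2$ componentwise, we get $\Lambda(\Omega)\ge\mu\lambda_1^D(\Omega)$. The strictness requires a small extra argument: if equality held, the minimizer $u=(u_1,\dots,u_N)$ would have to satisfy $\int_\Omega(\mathrm{div}\,u)^2=0$ \emph{and} each nonzero component $u_i$ would have to be a first Dirichlet eigenfunction of $\Omega$ (hence of one sign), so $\mathrm{div}\,u=\sum_i\partial_iu_i\equiv0$ with each $u_i$ a multiple of the same positive first eigenfunction $\varphi$; but then $\mathrm{div}\,u=(\sum_ic_i\partial_i)\varphi$ cannot vanish identically unless all $c_i=0$ (a directional derivative of a nonconstant function cannot be identically zero on a connected open set), contradicting $u\ne0$. (One can reduce to connected $\Omega$ since the infimum is attained on a connected component.)

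For the upper bound, the natural test function is $u=\nabla\varphi$ (or rather $u=\varphi\,e_k$ summed appropriately) — actually the cleanest choice is as follows: let $\varphi$ be a first Dirichlet eigenfunction and take the vector fields $u^{(k)}=\varphi\,e_k$, $k=1,\dots,N$; averaging the Rayleigh quotient over these $N$ directions. For $u=\varphi e_k$ one has $|\nabla u|^2=|\nabla\varphi|^2$ and $(\mathrm{div}\,u)^2=(\partial_k\varphi)^2$, so summing over $k$ gives total numerator $N\mu\int|\nabla\varphi|^2+(\lambda+\mu)\int|\nabla\varphi|^2=(N\mu+\lambda+\mu)\int|\nabla\varphi|^2=(\lambda+(N+1)\mu)\lambda_1^D(\Omega)\int\varphi^2$ and total denominator $N\int\varphi^2$; since $\Lambda(\Omega)$ is a min over all admissible $u$, it is bounded by the minimum over $k$ of the individual quotients, hence by their average, giving the claimed bound. (One must check these $u^{(k)}$ are admissible, i.e. in $H^1_0(\Omega)^N$, which is immediate since $\varphi\in H^1_0(\Omega)$.)

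For the sharpness statement, I would take $\Omega_\varepsilon=(0,1)\times(0,\varepsilon)^{N-1}$ (a thin cuboid) and exhibit a test function showing $\Lambda(\Omega_\varepsilon)/\lambda_1^D(\Omega_\varepsilon)\to\mu$. The idea: choose $u=\varphi_\varepsilon\,e_1$ where $\varphi_\varepsilon$ is the first Dirichlet eigenfunction of $\Omega_\varepsilon$, which for a thin box is well approximated by a product that is essentially constant-in-$x_1$ times the $1$D modes in the thin directions, so that $\partial_1\varphi_\varepsilon$ is small compared to $|\nabla\varphi_\varepsilon|$: precisely $\int(\partial_1\varphi_\varepsilon)^2/\int|\nabla\varphi_\varepsilon|^2\to0$ as $\varepsilon\to0$. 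Then $\Lambda(\Omega_\varepsilon)\le\frac{\mu\int|\nabla\varphi_\varepsilon|^2+(\lambda+\mu)\int(\partial_1\varphi_\varepsilon)^2}{\int\varphi_\varepsilon^2}=\mu\lambda_1^D(\Omega_\varepsilon)+(\lambda+\mu)\frac{\int(\partial_1\varphi_\varepsilon)^2}{\int\varphi_\varepsilon^2}$, and dividing by $\lambda_1^D(\Omega_\varepsilon)$ and using $\int(\partial_1\varphi_\varepsilon)^2/(\lambda_1^D(\Omega_\varepsilon)\int\varphi_\varepsilon^2)=\int(\partial_1\varphi_\varepsilon)^2/\int|\nabla\varphi_\varepsilon|^2\to0$ gives $\limsup_\varepsilon\Lambda(\Omega_\varepsilon)/\lambda_1^D(\Omega_\varepsilon)\le\mu$, which combined with the lower bound yields the infimum value $\mu$. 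The main obstacle here is making rigorous the claim that $\partial_1\varphi_\varepsilon$ is asymptotically negligible: this needs either an explicit near-eigenfunction test (take $u=\sin(\pi x_1)\,\psi_\varepsilon(x')\,e_1$ with $\psi_\varepsilon$ the product of $1$D ground states in the thin directions — then everything is computable and the $x_1$-derivative contributes $O(1)$ while $|\nabla u|^2\sim\varepsilon^{-2}$) rather than the true eigenfunction, which is in fact the simpler route and avoids any spectral asymptotics.
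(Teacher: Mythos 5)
Your overall route coincides with the paper's on all three points: the componentwise Poincar\'e inequality for the non-strict lower bound, the $N$ coordinate test fields $\varphi\,e_k$ (first Dirichlet eigenfunction placed in each slot) summed over $k$ for the upper bound, and a test field of the form $\varphi\, e_1$ on a degenerating cuboid for sharpness. (The paper uses the long cuboids $(0,L)\times(0,1)^{N-1}$ with $L\to\infty$, which by scale invariance of $\Lambda/\lambda_1^D$ is the same family as your thin boxes; note also that on a rectangular box the product of sines \emph{is} the exact first eigenfunction, so your computation is fully explicit and no ``near-eigenfunction'' caution is needed.)

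The one step that needs repair is your justification of strictness. The parenthetical claim that ``a directional derivative of a nonconstant function cannot be identically zero on a connected open set'' is false in general: $f(x,y)=x$ on a disk is nonconstant and satisfies $\partial_y f\equiv 0$. What rescues the argument --- and what the paper's Lemma~\ref{div0} actually proves --- is the Dirichlet boundary condition together with boundedness of $\Omega$: if $\sum_i c_i\,\partial_i\varphi\equiv 0$ with $(c_1,\dots,c_N)\neq 0$, then $\varphi$ is constant along every line with direction $(c_1,\dots,c_N)$; every such line leaves the bounded set $\Omega$, where $\varphi=0$, hence $\varphi\equiv 0$, contradicting that $\varphi$ is a first eigenfunction. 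So the contradiction is obtained not by forcing all $c_i=0$, but by forcing $\varphi\equiv 0$ (equivalently $u\equiv 0$); with that substitution, and your reduction to a connected component to ensure simplicity of $\lambda_1^D$ so that each nonzero $u_i$ is a multiple of the same $\varphi$, your proof is complete and matches the paper's.
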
 
\begin{remark}
From the left inequality in \eqref{ineqlambda1} and the famous Faber-Krahn inequality, we observe that
$$|\Omega|^{2/N} \Lambda(\Omega) \geq \mu \, |\Omega|^{2/N} \lambda_1^D(\Omega) \geq \mu j_{N/2-1,1}^2$$
where $j_{N/2-1,1}$ is the first zero of the Bessel function $J_{N/2-1}$. Thus we see that the infimum in \eqref{shop2} is strictly
positive. 
\end{remark}
\begin{proof} The inequality
$$\mu \lambda_1^D(\Omega) \leq \Lambda(\Omega)$$
follows the chain of inequalities that appear in the proof of Theorem \ref{theokorn} (multiplied by $\mu$ instead of $1/2$).

Now we demonstrate that the inequality must be strict. Indeed, assuming that $\mu \lambda_1^D(\Omega) = \Lambda(\Omega)$ and applying the previously established chain of inequalities, we observe that each $u_i$ must be a Dirichlet eigenfunction associated with $\lambda_1^D(\Omega)$.
But since we  also have ${\rm div}(u)=0$ we will get a contradiction according to   Lemma \ref{div0} below, and this achieves the proof   of the strict inequality.

We now prove the upper bound. For that purpose we consider $u_1$ being the (normalized) Dirichlet eigenfunction associated to $\lambda_1(\Omega)$ and we consider the vector test functions  $(0,0,\ldots,u_1,0,\ldots)$ composed of null functions except
$u_1$ in $i$-th position. Then 
$$\Lambda\leq \frac{\mu \int_{\Omega}|\nabla u_1|^2\;dx + (\lambda +\mu)\int_{\Omega} (\partial_i u_1)^2 \;dx}{\int_{\Omega} u_1^2 \;dx}.$$
Summing these $N$ inequalities yields
$$N \Lambda \leq N\mu \int_{\Omega}|\nabla u_1|^2\;dx +(\lambda +\mu)  \int_{\Omega}|\nabla u_1|^2\;dx =(\lambda + (N+1)\mu)\lambda_1^{D},$$
which finishes the proof of \eqref{ineqlambda1}.

Let us now prove the last assertion. For that purpose, we consider the cuboid $\Omega_L=(0,L)\times \prod_{i=2}^N (0,1)$
and take a first Dirichlet eigenfunction of $\Omega_L$ namely
$$u_1(X)=\sin\left(\pi \frac{x_1}{L}\right) \prod_{i=2}^N \sin(\pi x_i).$$
We will use the fact that
$$\lambda_1^D(\Omega)=\pi^2 (N-1+\frac{1}{L^2}).$$
Now, we plug in the Rayleigh quotient defining $\Lambda(\Omega_L)$ the vector $u=(u_1,0,\ldots,0)$.
Since
$$\int_{\Omega_L} |u|^2 \;dx=\frac{L}{2^N}$$
$$\int_{\Omega_L}|\nabla u|^2 \; dx= \int_{\Omega_L}|\nabla u_1|^2 \; dx=\frac{L \pi^2}{2^N}(N-1+\frac{1}{L^2})=
\frac{L}{2^N} \lambda_1^D(\Omega_L$$
$$\int_{\Omega_L}({\rm div}(u))^2 \; dx =\frac{ \pi^2}{L^2}\frac{L}{2^N}$$
we deduce that
\begin{eqnarray}
\Lambda (\Omega_L)\leq  \frac{\mu \frac{L}{2^N} \lambda_1^D(\Omega_L) + (\lambda+\mu) \frac{L}{2^N} \frac{\pi^2}{L^2}}{\frac{L}{2^N}} \notag 
\end{eqnarray}
or
$$\Lambda(\Omega_L) \leq \mu \lambda_1^D(\Omega_L) +  (\lambda+\mu) \frac{L}{2^N} \frac{\pi^2}{L^2}$$
and finally letting $L\to +\infty$ we conclude that
$$\inf_{\Omega}  \frac{\Lambda(\Omega)}{\lambda_1^D(\Omega)}=\mu,$$
as claimed in the proposition.
\end{proof}

%\begin{remark} It is tempting to believe that 
%$$\max_{\Omega} \frac{\Lambda(\Omega)}{\lambda_1^D(\Omega)}=\frac{3}{4}$$
%and is achieved for squares. What can actually be proved is that any eigenfunction for $\lambda_1^D$ of the unit square will give an eigenfunction for Korn but we do not know if the associated Korn eigenvalue is the first one.
%\end{remark}

\begin{lemma}\label{div0}  Let $u=(u_1,u_2,\ldots, u_N)$ be a $N$-uple of functions in $H^1_0(\Omega)$ such that ${\rm div}(u)=0$ and for all $i$, $u_i=\alpha_i u_1$ for some $\alpha_i \in \R$. Then $u_1$ and then all the $u_i$ are identically zero.\\
In particular, if $\Omega$ is connected and all the $u_i$ are eigenfunctions associated to the first eigenvalue $\lambda_1^D(\Omega)$,
it is not possible that ${\rm div}(u)=0$.
\end{lemma}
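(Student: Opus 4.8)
The plan is to exploit the hypothesis $u_i = \alpha_i u_1$ together with the divergence-free condition to reduce everything to a single scalar equation for $u_1$, and then invoke the unique continuation / maximum principle through the fact that $u_1 \in H^1_0(\Omega)$. First I would write out the divergence explicitly: $\operatorname{div}(u) = \sum_{i=1}^N \partial_i u_i = \sum_{i=1}^N \alpha_i \partial_i u_1 = a \cdot \nabla u_1$, where $a = (\alpha_1,\dots,\alpha_N) \in \R^N$ with $\alpha_1 = 1$. So the condition $\operatorname{div}(u)=0$ becomes the first-order transport equation $a\cdot \nabla u_1 = 0$ in $\Omega$, which says that $u_1$ is constant along every line in the direction $a$. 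Since $u_1 \in H^1_0(\Omega)$ and $\Omega$ is bounded, on each such line the function $u_1$ restricted to $\Omega$ must vanish at the endpoints (in the trace sense along a.e. line, by Fubini for Sobolev functions), hence is identically zero on that line; integrating over all lines gives $u_1 \equiv 0$, and then $u_i = \alpha_i u_1 \equiv 0$ for all $i$. The cleanest way to make this rigorous is to change coordinates so that $a$ is a coordinate direction, say $a = |a| e_N$ (note $a \neq 0$ since $\alpha_1 = 1$), and then apply the one-dimensional fact that an $H^1_0$ function on a bounded interval with zero derivative is zero, slice by slice, using that $H^1_0(\Omega)$ functions restrict to $H^1_0$ functions on almost every line for the absolutely continuous representative.

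For the second assertion, suppose $\Omega$ is connected and all the $u_i$ are eigenfunctions for $\lambda_1^D(\Omega)$ with $\operatorname{div}(u) = 0$. The point is that the first Dirichlet eigenspace of a connected open set is one-dimensional (the first eigenfunction is simple, being of constant sign), so any two first eigenfunctions are proportional: there exist $\alpha_i \in \R$ with $u_i = \alpha_i u_1$ (if $u_1 \equiv 0$ we could instead normalize by whichever $u_i$ is nonzero, or the conclusion is already trivial). Then the first part of the lemma applies verbatim and forces $u_1 \equiv 0$, contradicting that $u_1$ is an eigenfunction (eigenfunctions are nonzero by definition). This yields the stated impossibility.

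The main obstacle — really the only subtle point — is the slicing argument: one must justify that $a \cdot \nabla u_1 = 0$ a.e.\ for $u_1 \in H^1_0(\Omega)$ genuinely implies $u_1 = 0$, since "constant along lines" is an a.e.\ statement and one needs the vanishing boundary trace to propagate correctly. This is handled by the standard characterization of Sobolev functions via absolute continuity on lines (the ACL property): after rotating so $a$ points along $e_N$, for a.e.\ fixed $(x_1,\dots,x_{N-1})$ the slice $t \mapsto u_1(x_1,\dots,x_{N-1},t)$ is absolutely continuous on each component of the corresponding slice of $\Omega$, has derivative zero, hence is locally constant; and since $u_1 \in H^1_0(\Omega)$ its zero extension to $\R^N$ lies in $H^1(\R^N)$, so these slices are absolutely continuous on all of $\R$ and vanish outside a bounded set, forcing them to be identically zero. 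Fubini then gives $u_1 = 0$ a.e. Everything else is immediate.
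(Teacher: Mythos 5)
Your proposal is correct and follows essentially the same route as the paper: reduce $\operatorname{div}(u)=0$ to the transport equation $\partial_1 u_1+\sum_{i\geq 2}\alpha_i\partial_i u_1=0$, conclude that $u_1$ is constant along lines directed by $(1,\alpha_2,\dots,\alpha_N)$ and hence vanishes by the Dirichlet condition on the bounded set $\Omega$, and then use simplicity of the first Dirichlet eigenvalue on a connected domain for the final assertion. Your ACL/slicing argument (zero extension to $H^1(\R^N)$, absolute continuity on almost every line, Fubini) is simply a careful justification of the step the paper states informally, so the two proofs coincide in substance.
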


\begin{proof} From the assumptions  ${\rm div}(u)=0$ and $u_i=\alpha_i u_1$  we deduce that $u_1$ satisfies 
$$\frac{\partial u_1}{\partial x_1} + \sum_{i=2}^N \alpha_i \frac{\partial u_1}{\partial x_i} =0$$ 
which means that $u_1$ must be constant on all affine lines directed by $(1,\alpha_2,\ldots,\alpha_N)$. Since all those lines touches the boundary of $\Omega$,  from  the Dirichlet condition on $u_1$ we deduce that $u_1$ must be identically $0$.\\
The last assertion comes from the fact that the first Dirichlet eigenvalue (for the Laplacian) of a connected domain is simple.
\end{proof}

%%%%%%%%%%%%%%%%%%%%%%%%%%%%%%%%%%%%%%%%%%%%%%%%%%
%%%%%%%%%%%%%%%%%%%%%%%%%%%%%%%%%%%%%%%%%%%%%%%%%%
%%%%%%%%%%%%%%%%%%%%%%%%%%%%%%%%%%%%%%%%%%%%%%%%%%

%%%%%%%%%%%%%%%%%%%%%%%%%%%%%%%%%%%%%%%%%%%%%%%%%%
%%%%%%%%%%%%%%%%%%%%%%%%%%%%%%%%%%%%%%%%%%%%%%%%%%
%%%%%%%%%%%%%%%%%%%%%%%%%%%%%%%%%%%%%%%%%%%%%%%%%%

 \section{Existence and regularity }\label{secexistence}
\subsection{Existence of an optimal  quasi-open  set}
%%%%%%%%%%%%%%%%%%%%%%%%%%%%%%%%%%%%%%%%%%%%%%%%%%

In this section, we will fix the values of the Lamé coefficients, specifically choosing $\mu = 1/2$ and $\lambda = 0$, which corresponds to the Korn inequality. This choice does not affect the proof of existence (since the general case would simply involve multiplying the terms by positive constants), but it simplifies the proof and enhances its clarity.

We establish the existence of an optimal shape within the class of quasi-open sets, employing a standard concentration-compactness strategy first introduced by Lions \cite{lions}. This approach has been utilized to solve shape optimization problems for the Laplace operator, initially by Dorin Bucur (see \cite{dorin1,dorin2,dorin3,depveli}), and subsequently by several other authors. Recently, this strategy has also been applied to the Stokes operator \cite{henrot-mazari-privat}. We denote by $\operatorname{Cap}(A)$ the $H^1$-capacity of $A$ (for instance   the Bessel capacity). A set $A\subset \R^N$ is said to be quasi-open if, for every $\varepsilon>0$ there exists an open set $\Omega_\varepsilon$ such that $A\subset \Omega_{\varepsilon}$ and $\operatorname{Cap}(\Omega_{\varepsilon}\setminus A)\leq \varepsilon$. We first introduce the class
$$\mathcal{O}:=\{ \Omega \subset \R^N \text{ quasi-open  such that } 0<|\Omega|< +\infty \}.$$
The space $H^1_0(\Omega)$ is defined as functions $u \in H^1(\R^N)$ such that $u=0$ quasi-everywhere on $\Omega^c$.  
Notice that a domain $\Omega \in \mathcal{O}$ is not necessarily bounded.  However, the space $H^1_0(\Omega)$ is known to be a closed subspace of $H^1(\R^N)$ which is compactly embedded  into $L^2(\R^N)$, when $|\Omega|<\infty$ (because by definition of being quasi-open there exists an open set $E$ with $|E|<+\infty$ such that $\Omega \subset E$ thus $H^1_0(\Omega)\subset H^1_0(E)$ and the standard compact embedding of $H^1_0(E)$ into $L^2$ applies).

Notice also that thanks to Proposition \ref{remarkK} below, the space of all  $u \in L^2(\Omega)^N$ such that $e(u)\in L^2(\Omega)$ and $u=0$ $\operatorname{Cap}_{1,2}$-q.e. in $\Omega^c$ coincides with the space $H^1_0(\Omega)^N$.

Then we can relax the definition of $\Lambda(\Omega)$ for $\Omega \in \mathcal{O}$  by considering
$$\Lambda(\Omega):= \min_{u \in H^1_0(\Omega)^N} \frac{\int_{\R^N} |e(u)|^2 \; dx}{\int_{\R^N}|u|^2 \;dx}.$$
Notice here  that $\Omega$ is merely quasi-open and not necessarily open, but the definition coincides with the standard one when $\Omega$ is open.  

  Also, it is easy to check that the minimum in the definition of $\Lambda$ is achieved by an $H^1_0(\Omega)$ function, thanks to the compact embedding of $H^1_0(\Omega)$   into $L^2(\R^N)$ and the semicontinuity behavior of the convex functional $u\mapsto \int_{\R^N} |e(u)|^2 \; dx$ for the weak topology of $H^1$.

In the sequel we will need the famous Korn inequality but now in the whole $\R^N$, in particular valid   for  $u\in H^1_0(\Omega)^N$ with $\Omega \in \mathcal{O}$ quasi-open, as stated in the following proposition.

\begin{proposition}[Korn inequality in $\R^N$] \label{remarkK}If   $u \in L^2(\R^N)^N$ is  such that $e(u)\in L^2(\R^N)$, then $u\in H^1(\R^N)^N$ and 
\begin{eqnarray}
 2\int_{\R^N } |e(u)|^2 \;dx =  \int_{\R^N  } |\nabla u|^2 +({\rm div} (u))^2 \; dx.  \label{kornkornkorn}
\end{eqnarray}
\end{proposition}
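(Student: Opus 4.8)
The plan is to prove the identity \eqref{kornkornkorn} first for smooth compactly supported vector fields, and then to pass to the general case by a density/approximation argument, being careful that the hypothesis only controls $u$ and $e(u)$ in $L^2$, not $\nabla u$ a priori. For $u \in C^\infty_c(\R^N)^N$, the identity is the classical integration-by-parts computation already used in the proof of Theorem~\ref{theokorn}: expanding $|e(u)|^2 = \frac14 \sum_{i,j}(\partial_i u_j + \partial_j u_i)^2$ gives $2|e(u)|^2 = |\nabla u|^2 + \sum_{i,j}\partial_i u_j \,\partial_j u_i$, and integrating the cross term by parts twice (all boundary terms vanish by compact support) turns $\int \partial_i u_j\,\partial_j u_i$ into $\int \partial_i u_i\,\partial_j u_j$, whose sum over $i,j$ is $\int (\mathrm{div}\,u)^2$. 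This establishes \eqref{kornkornkorn} on the dense-in-energy class, and in particular shows $\|\nabla u\|_{L^2} \le \sqrt 2\,\|e(u)\|_{L^2}$ on that class.

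Next I would upgrade regularity: given $u \in L^2(\R^N)^N$ with $e(u) \in L^2(\R^N)$, I claim $u \in H^1(\R^N)^N$. The clean way is to work on the Fourier side. The relation between $\widehat{\nabla u}$ and $\widehat{e(u)}$ is, componentwise, $\widehat{e(u)}_{jk}(\xi) = \tfrac{i}{2}(\xi_j \widehat{u}_k + \xi_k \widehat{u}_j)$, and a short linear-algebra computation gives the pointwise identity $2|\widehat{e(u)}(\xi)|^2 = |\xi|^2\,|\widehat u(\xi)|^2 + |\xi\cdot\widehat u(\xi)|^2 \ge |\xi|^2\,|\widehat u(\xi)|^2$ for a.e.\ $\xi$ (this is just Plancherel applied to the smooth-case identity, or a direct expansion). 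Since the left-hand side is in $L^1$ by hypothesis, $|\xi|\,\widehat u \in L^2$, hence $u \in H^1(\R^N)^N$. Having established $u \in H^1$, the full identity \eqref{kornkornkorn} follows either by integrating the pointwise Fourier identity above (Plancherel) or by approximating $u$ in $H^1$ by mollifications/cutoffs $u_n \in C^\infty_c$, for which the identity holds, and passing to the limit in all three $L^2$-norms.

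The main obstacle — and the only genuinely nontrivial point — is the \emph{a priori} regularity step: one cannot simply integrate by parts on $u$ itself because $\nabla u$ is not known to be a function, so the manipulation of the cross term $\int \partial_i u_j\,\partial_j u_i$ is not yet justified. The Fourier-transform argument sidesteps this cleanly because it never requires the integrability of $\nabla u$ before concluding it. An alternative is the distributional Korn inequality (Ne\v{c}as-type lemma: if $v$ and all $\partial_j v$ lie in $H^{-1}$ then $v \in L^2$), applied to $v = \partial_k u_j$: one has $\partial_i(\partial_k u_j) = \partial_k(\partial_i u_j + \partial_j u_i) - \partial_j(\partial_i u_k) + \ldots$, a standard rearrangement expressing second derivatives of $u$ in terms of first derivatives of $e(u)$, which shows $\nabla u \in L^2$; but on the full space $\R^N$ the Fourier proof is by far the shortest and most transparent, so that is the route I would write up. Once $u \in H^1$ is in hand, the remaining verification of the exact equality is routine.
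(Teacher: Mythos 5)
Your proof is correct, but it takes a genuinely different route from the paper. The paper argues by truncation and exhaustion: it multiplies $u$ by cut-offs $\varphi_R$ supported in $B(0,2R)$, applies the bounded-domain identity to $\varphi_R u\in H^1_0(B(0,2R))^N$, estimates the resulting error term by $CR^{-2}\int |u|^2$ (which vanishes as $R\to+\infty$ since $u\in L^2$), and then obtains $\nabla u\in L^2$ and the identity \eqref{kornkornkorn} via Fatou and monotone convergence. You instead work on the Fourier side: from $\widehat{e(u)}_{jk}=\tfrac{i}{2}(\xi_j\widehat u_k+\xi_k\widehat u_j)$ (valid in $\mathcal S'$ since $u\in L^2$) you derive the pointwise algebraic identity $2|\widehat{e(u)}(\xi)|^2=|\xi|^2|\widehat u(\xi)|^2+|\xi\cdot\widehat u(\xi)|^2$, which simultaneously yields the a priori regularity $u\in H^1(\R^N)^N$ (since the left-hand side is integrable) and, after Plancherel, the exact equality; your verification of that identity is sound, and the preliminary $C^\infty_c$ integration-by-parts step is then not even needed. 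The comparison: your Plancherel argument is shorter and neatly sidesteps the only delicate point, namely justifying manipulations before $\nabla u$ is known to be an $L^2$ function — a point the paper's proof handles by the cut-off device, which itself quietly uses that a compactly supported $L^2$ field with square-integrable symmetric gradient lies in $H^1_0$ of a large ball (a fact one would justify by mollification). On the other hand, the paper's real-variable argument is the one that transfers to settings where Fourier analysis is unavailable (e.g.\ localized or domain versions), which is in the spirit of how the identity is used elsewhere in the paper. Either write-up is acceptable; yours is the more transparent one on all of $\R^N$.
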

\begin{proof}  Let $R>0$ be given and let $\varphi_{ R} \in [0,1]$ be a cut-off function such that $\varphi_{\lambda, R}=1$ on $B(0,R)$,  $\varphi_{\lambda,R}=0$ on $B(0,2 R)^c$, and 
$$|\nabla \varphi_{\lambda,R}|\leq C\frac{1}{R}.$$
Then the function $u_R:=\varphi_{R} u$ clearly belongs to $H^1_0(B(0,2 R))^N$ and  Korn's inequality   \eqref{kornkornkorn}  holds true for the function $\varphi_{R} u$.  For simplicity we will by now denote by $\varphi$ the function $\varphi_{R}$. Notice that 
$$
e(\varphi u)_{ij}=\frac{u^i\partial_j\varphi +u^j\partial_i\varphi }{2}+\varphi e(u)_{i,j},
$$
so  pointwisely in $\mathbb{R}^N$ it holds the following estimate
$$|e(u  \varphi )| \leq  |u| |\nabla \varphi | +  |\varphi | |e(u)|\leq \frac{C}{ R} |u| + |e(u)|,$$
$$|{\rm div}(u \varphi)|\leq \frac{C}{ R} |u| + |{\rm div}( u)|,$$

$$|D (u\varphi)|\leq \frac{C}{R} |u| + |D(u)|.$$

Recall also that  $u\varphi = u$ in $B(0,R)$. Now  applying \eqref{kornkornkorn} in $B(0, R)$ to the function $\varphi u$ we obtain

\begin{eqnarray}
2\int_{\R^N\cap B(0,R)} |e(u)|^2 \;dx =  \int_{\R^N \cap B(0,R)} |\nabla u|^2 +({\rm div} (u))^2 \; dx +E(R), \label{aap}
\end{eqnarray}
with 
$$|E(R)| \leq \frac{C}{R^2} \int_{\Omega \setminus  B(0,2 R)} |u|^2 \;dx.$$
We now  let $R\to +\infty$ which yields, 
$$|E(R)|  \underset{R\to +\infty}{\longrightarrow}  0.$$
Thus passing \eqref{aap} to the limit, and using that $e(u) \in L^2(\R^N)$ we can use Fatou lemma to get first $\nabla u \in L^2(\R^N)$,  after which  the monotone convergence theorem allows to conclude
\begin{eqnarray}
 2\int_{\R^N } |e(u)|^2 \;dx =  \int_{\R^N  } |\nabla u|^2 +({\rm div} (u))^2 \; dx.  \notag
\end{eqnarray}
This proves that $u \in H^1(\R^N)^N$ and finishes the proof.
\end{proof}

The purpose of this section is to prove the following result.

\begin{theorem}{\label{existence} } For all $V>0$ there exists a solution for the problem
$$\min_{\Omega \in \mathcal{O} \; \text{ such that }\; |\Omega| \leq V} \Lambda(\Omega).$$
\end{theorem}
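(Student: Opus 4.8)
The plan is to follow the concentration-compactness strategy of Lions, adapted to the vectorial setting, exactly as has been done for the Dirichlet Laplacian and the Stokes operator. First I would rescale: since $\Lambda(t\Omega)=\Lambda(\Omega)/t^2$, a minimizing sequence of the volume-constrained problem can be replaced by one with $|\Omega_n|=V$ exactly (or $|\Omega_n|\le V$), and by the Remark after Proposition~\ref{boundlambda1} the infimum $m:=\inf\Lambda$ is a strictly positive finite number. To each $\Omega_n$ associate a normalized first eigenfunction $u_n\in H^1_0(\Omega_n)^N$ with $\int_{\R^N}|u_n|^2=1$; then by \eqref{kornkornkorn} the sequence $(u_n)$ is bounded in $H^1(\R^N)^N$, so up to a subsequence $u_n\rightharpoonup u$ weakly in $H^1(\R^N)^N$. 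Apply the concentration-compactness lemma to the scalar densities $|u_n|^2$ (or to $\rho_n=|u_n|^2+|e(u_n)|^2$, which also has bounded mass): one of the three alternatives — compactness, vanishing, dichotomy — must occur along a further subsequence.

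Next I would eliminate vanishing and dichotomy. \emph{Vanishing} is excluded by a uniform bound of the form $\Lambda(\Omega)\ge c\,|\Omega|^{-2/N}$ together with the fact that the $u_n$ are concentrated on sets of bounded measure: if $\sup_{y}\int_{B(y,1)}|u_n|^2\to 0$, a standard argument (covering $\R^N$ by unit balls and using $\|u_n\|_{L^2}=1$, $\|\nabla u_n\|_{L^2}$ bounded, together with the measure constraint $|\{u_n\ne 0\}|\le V$) forces $\Lambda(\Omega_n)\to\infty$, contradicting $\Lambda(\Omega_n)\to m<\infty$. \emph{Dichotomy} is ruled out by the usual subadditivity inequality for the map $V\mapsto \inf\{\Lambda(\Omega):|\Omega|\le V\}$: splitting $u_n$ via a cut-off into two pieces $u_n^1,u_n^2$ living (essentially) at distance $\to\infty$, with $\int|u_n^1|^2\to\theta\in(0,1)$ and $\int|u_n^2|^2\to 1-\theta$ and measures adding up to at most $V$, one shows the energy splits up to an error controlled by $\|\nabla\varphi\|_\infty^2\sim R^{-2}$ (the same cut-off computation as in the proof of Proposition~\ref{remarkK}), and the scaling $m(V)=V^{-2/N}m(1)$ makes the strict subadditivity $m(V)<m(V_1)+m(V_2)$ when $V=V_1+V_2$ incompatible with splitting an optimal sequence. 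Hence \emph{compactness} holds: there are translations $y_n$ with $u_n(\cdot+y_n)\to u$ strongly in $L^2(\R^N)^N$, and we may assume $y_n=0$.

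Finally I would identify the limit as an admissible competitor. Strong $L^2$ convergence gives $\int_{\R^N}|u|^2=1$, so $u\not\equiv 0$; define $\Omega^*:=\{|u|>0\}$, which is quasi-open (as the positivity set of an $H^1(\R^N)$ function, after choosing a quasi-continuous representative) and satisfies $|\Omega^*|\le\liminf|\Omega_n|\le V$ by weak lower semicontinuity of the measure of superlevel sets under $L^2$ convergence (or by a standard capacity argument). Then $u\in H^1_0(\Omega^*)^N$, and by weak $H^1$ lower semicontinuity of the convex functional $v\mapsto\int|e(v)|^2$,
\[
\Lambda(\Omega^*)\le\frac{\int_{\R^N}|e(u)|^2}{\int_{\R^N}|u|^2}\le\liminf_n\int_{\R^N}|e(u_n)|^2=\liminf_n\Lambda(\Omega_n)=m,
\]
while $\Lambda(\Omega^*)\ge m$ by admissibility of $\Omega^*$; hence $\Omega^*$ is a minimizer. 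The main obstacle is the careful bookkeeping in the dichotomy case: one must verify that the two localized pieces $\varphi_R u_n$ and $(1-\varphi_R)u_n$ are genuinely supported on quasi-open sets whose measures add up to at most $V+o(1)$, that the Korn identity applies to each piece so the Rayleigh energies add up correctly, and that the strict subadditivity of $m(\cdot)$ (which is where the scaling $m(V)=V^{-2/N}m(1)$ and $N\ge 1$ enter) really closes the contradiction — this vectorial adaptation of Lions' argument, using \eqref{kornkornkorn} in place of the Dirichlet energy, is the technical heart of the proof.
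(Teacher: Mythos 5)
Your proposal is correct in outline and follows the same concentration--compactness skeleton as the paper: minimizing sequence, normalized eigenfunctions bounded in $H^1(\R^N)^N$ through the Korn identity \eqref{kornkornkorn}, Lions' trichotomy applied to the scalar densities $|u_n|^2$, and, in the compactness case, strong $L^2$ convergence of translates, the choice $\Omega^*=\{|u|>0\}$, Fatou for the measure constraint and weak lower semicontinuity of $v\mapsto\int|e(v)|^2$; this coincides with the paper's Step 1. Where you genuinely diverge is in excluding the bad alternatives. For vanishing you argue directly from the measure constraint (Lions' lemma gives $u_n\to 0$ in $L^p$, $2<p<2^*$, and H\"older on $\{|u_n|>0\}$ with $|\{|u_n|>0\}|\le V$ contradicts $\|u_n\|_{L^2}=1$), while the paper invokes a lemma of Bucur; both are fine. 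For dichotomy the paper uses neither scaling nor subadditivity: it applies concentration--compactness again to one localized piece, observes that each piece is admissible (its support lies inside $\{|u_k|>0\}$, hence has measure at most $V$ and Rayleigh quotient at least $\lambda^*$), and concludes that either the $L^2$ limit of that piece already provides an optimal quasi-open set, or the combined quotient strictly exceeds $\lambda^*$. Your route via the scaling law $m(W)=W^{-2/N}m(1)$ also works, but note that the inequality which actually closes the argument is the mass-weighted one, $m(V)+o(1)\ge \theta\, m(V_1^n)+(1-\theta)\, m(V_2^n)$, contradicted because both limiting support measures are strictly smaller than $V$ so that $m(V_i)>m(V)$ by strict monotonicity; it is not the literal subadditivity $m(V)<m(V_1)+m(V_2)$ that you quote.

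Two bookkeeping points in your dichotomy step need attention. First, with the cut-off pair $\varphi_R$ and $1-\varphi_R$ the two pieces overlap on the transition annulus; the $L^2$ mass there is $o(1)$, but the measure of $\{|u_n|>0\}$ there need not be, so the claim that the support measures add up to at most $V+o(1)$ --- which your monotonicity argument genuinely requires, since each piece alone only gives the useless bound $V_i\le V$ --- calls for either two disjointly supported cut-offs as in the paper (then the supports are disjoint subsets of $\{|u_n|>0\}$ and the measures sum to at most $V$ exactly), or a pigeonhole choice of a thin transition shell inside the widening gap $R'_n-R_n\to+\infty$ on which $|\{|u_n|>0\}|$ is small. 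Second, you should note that both limiting measures are positive: if one vanished, the corresponding piece, carrying limiting mass $\theta$ or $1-\theta>0$, would have energy at least $(\text{mass})\cdot m(V_i^n)\to+\infty$, contradicting the energy bound; positivity together with $V_1+V_2\le V$ is exactly what makes both $V_i<V$ and lets strict monotonicity bite. With these fixes your argument is complete and is a legitimate alternative; the paper's recursion has the advantage of needing neither the disjoint-support bookkeeping nor the scaling law, at the cost of a second pass through the concentration-compactness alternative.
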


\begin{proof} The proof follows the same approach as in \cite{henrot-mazari-privat} reasoning on the scalar function $|u|$ and using the concentration-compactness strategy of Lions \cite{lions}.  More precisely, we let $\Omega_k$ be a minimizing  sequence with $|\Omega_k|\leq V$ and we consider $w_k:=|u_k|$ where $u_k$ is a chosen normalized eigenvalue for  $\Lambda(\Omega_k)$. In other words, $\|w_k\|_{L^2(\R^N)}=1$ and by  Proposition~\ref{remarkK},
\begin{eqnarray}
\int_{\R^N} |\nabla (w_k)|^2 \;dx  \leq \int_{\R^N} |\nabla u_k|^2 \; dx \leq 2\int_{\R^N} |e(u_k)|^2 \;dx = 2\Lambda(\Omega_k)\leq C_0, \label{boundD}
\end{eqnarray}
so that $w_k$ is uniformly bounded in $H^1(\R^N)$. Let  $Q_n:\R^+\to \R^+$ be the sequence of concentration functions\footnote{According to Lions \cite{lions} this notion was first introduced by L\'evy.}  defined by 
$$Q_n(R):= \sup_{y\in \R^N}\int_{B(y,R)}|u_k|^2 \;dx.$$
Then $Q_n$ is a sequence of nondecreasing functions on $\R^n$ which are uniformly bounded by $1$. By Dini's theorem, up to extract a subsequence (not relabelled), $(Q_n)_n$  admits a pointwize limit function which is nondecreasing, bounded by 1, and that we denote  $Q:\R^+ \to \R^+$. Then we let 
$$\alpha:=\lim_{R\to +\infty} Q(R) \in [0,1].$$
The value of $\alpha$ is usually referred to the ``maximal concentration''. Depending on the value of $\alpha$, we know that one of the following occurs by the concentration-compactness principle of Lions \cite[Lemma I.1]{lions}.

\begin{itemize}
\item {\bf If $\alpha=1$: Compactness:} There exists   a sequence $(y_k)_{k \in \mathbb{N}}$ such that $|w_k|^2(\cdot-y_k)$ is tight:
$$\forall \varepsilon >0, \exists R< +\infty , \forall k \quad \int_{y_k+B_R} w_k^2 \;dx\geq 1- \varepsilon.$$
\item {\bf If $\alpha \in (0,1)$: Dichotomy:} There exist  $(y_k)_{k \in \mathbb{N}}$ and two sequences of positive radii $(R_k)_{k}$, $(R_k')_k$ satisfying
$$R'_k - R_k \to +\infty \text{ and } R_k,R_k'\to +\infty,$$
and such that 
\begin{eqnarray}
\int_{B(y_k,R_k)} w_k^2 \to \alpha, \text{ } \int_{B(y_k,R_k')^c} w_k^2 \to 1-\alpha. \label{information1}
\end{eqnarray}
\item {\bf If $\alpha=0$: Vanishing.} For every $R>0$, 
$$\lim_{k\to +\infty} \sup_{y \in \R^N} \int_{B(y,R)} w_k^2 =0.$$
\end{itemize}

As usual, our aim is to prove that only the compactness case can occur, by ruling out the two other cases.   Let us first prove that the compactness situation implies the desired existence.
\medskip

\noindent{\bf Step 1.} \emph{Compactness implies existence.}  We consider the sequence of translated functions  $u_k(\cdot -y_k)$ that we still denote by $u_k$. We know by assumption that this sequence is uniformly bounded in $H^1(\R^N)$ thus admits a weakly converging subsequence.  Since $H^1(\R^N)$ is compactly embedded in $L^2_{loc}(\R^N)$, using a diagonal argument we can extract a subsequence (not relabelled)  and a function  $u \in L^2_{loc}(\R^N)$ such that $u_k\to u$ strongly in $L^2_{loc}$ and weakly in $H^1(\R^N)$.  Now we use that  $(w_k)$ is in the situation of compactness, and in particular for every $\varepsilon >0$ there exists  $R>0$ such that 
$$\int_{B_R} |u_k|^2 \;dx\geq 1 - \varepsilon.$$
Passing to the limit and using the convergence of $u_k$ in  $L^2(B_R)$ we deduce that $\int_{B_R} |u|^2\; dx \geq  \; 1-\varepsilon$, which  means in particular that 
$$\int_{\R^N} |u|^2\; dx \geq  \; 1-\varepsilon,$$
and since $\varepsilon$ is arbitrary, we finally get $\int_{\R^N} |u|^2\; dx \geq  \; 1$. But of course the reverse is also true so in conclusion $\|u\|_{L^2(\R^N)}=1$.  But we already knew that $u_k$ was converging weakly in $L^2(\R^N)$ to $u$. We just have proved that the sequence of norms are also converging so finally $u_k$ converges strongly in $L^2(\R^N)$ to $u$. Passing to the limit in the Rayleigh quotient, strongly in $L^2$ for $u_k$ and weakly  in $L^2$ for $e(u_k)$ we deduce that 
\begin{eqnarray}
\inf_{\Omega \in \mathcal{O} \; \text{ such that }\; |\Omega| \leq V} \Lambda(\Omega)= \frac{\int_{\R^N} |e(u)|^2 \;dx}{\int_{\R^N} |u|^2 \;dx}. \label{infimum}
\end{eqnarray}
Let us denote $\Omega=\{|u|>0\}$, which is a quasi-open set, and from the equality in \eqref{infimum} we know that $u$ must be an eigenfunction associated to $\Lambda(\Omega)$. Furthermore, since $\int_{\R^N} |u|^2 \;dx=1$ we know that $|\Omega|>0$.  We can also assume that $|u_k|$ converges a.e. in $\R^N$ to $|u|$. This implies, for a.e. $x\in \R^N$,
$$\mathds{1}_{\{|u|>0\}}(x) \leq  \liminf_{k}\mathds{1}_{\{|u_k|>0\}}(x),$$
 and since $|\{|u_k|>0\}|\leq V$, we deduce by Fatou Lemma that  $|\Omega|\leq V$ and finally $\Omega$ is a solution.

\medskip

\noindent{\bf Step 2.} \emph{Vanishing does not occur.} This case is easy to exclude by standard arguments. Indeed, Lemma 3.3 in \cite{zbMATH01482128} says   that up to a subsequence, $w_k(\cdot +y_k)$ does not weakly converge in $H^1(\mathbb{R}^N)$, which is a contradiction with the uniform bound in  \eqref{boundD} together with the fact that $\|w_k\|_2=1$, implying  that $w_k$ is uniformly bounded in $H^1(\mathbb{R}^N)$ thus admits a weakly converging subsequence. 

\medskip

\noindent{\bf Step 3.} \emph{Dichotomy cannot occur.} Assume that $(w_k)_{k\in \mathbb{N}}$ is in the dichotomy situation. Then the idea is to split the minimizing sequence in two disjoint pieces. For that purpose we define $\eta_k:=(R_k'-R_k)/4$ and then we construct two cut-off functions: the first one $\varphi_{k,1}$ supported in $B(y_k,R_k+2\eta_k)$ is such that $\varphi_{k,1}=1$ in $B(y_k,R_k)$, and the second one $\varphi_{k,2}$ equal to 1 in $B(y_k,R_k'-\eta_n)^c$ and $0$ on $B(y_k,R'_k-2\eta_k)$ satisfying
 $$|\nabla \varphi_{k,1}|+|\nabla \varphi_{k,2}|\leq 1/\eta_k \to 0.$$

Next, we define
$$v_{k,1}=\varphi_{k,1} u_k \quad  \text{ and } \quad v_{k,2}=\varphi_{k,2} u_k.$$ 
We want  to prove that the sum  $v_{k,1}+v_{k,2}$ has almost the same $L^2$ norm as the original function $u_k$ because $w_k$ is in a dichotomy situation. Let us define the annulus $A_k:=B(y_k,R'_k)\setminus B(y_k,R_k)$. Because of \eqref{information1} and the fact that $\|w_k\|_2=1$ for all $k$, we directly get
$$\int_{A_k} |w_k|^2 \;dx \to 0,$$
and since $|u_k|=|w_k|$ we also have for $i=1,2$,
$$\int_{A_k} |u_k \varphi_{k,i}|^2 \;dx \leq \int_{A_k} |u_k |^2 \;dx= \int_{A_k} |w_k |^2 \;dx \to 0.$$
We deduce that 
\begin{eqnarray}
\int_{\mathbb{R}^N}|v_{k,1}|^2 \;dx  =\int_{B(y_k,R_k)}|w_{k}|^2 \;dx + \int_{A_k} |u_k  \varphi_{k,1}|^2 \to \alpha_1 \label{inin1}\\
\int_{\mathbb{R}^N}|v_{k,2}|^2 \;dx  =\int_{B(y_k,R_k')}|w_{k}|^2 \;dx + \int_{A_k} |u_k  \varphi_{k,2}|^2 \to 1-\alpha_1.\label{inin2}
\end{eqnarray}

Then we want to estimate the difference of the symmetrized gradients. For that purpose we compute $e(\varphi u)$ as follows. From the identity
$$\partial_j (\varphi u^i)=  u^i\partial_j\varphi +\varphi \partial_j u^i,$$ 
we get
\begin{eqnarray}
e(\varphi u)_{ij}&=&  \frac{ u^i\partial_j\varphi +\varphi \partial_j u^i}{2}+\frac{ u^j\partial_i\varphi +\varphi \partial_i u^j}{2} \notag \\
&=& \frac{u^i\partial_j\varphi +u^j\partial_i\varphi }{2}+\varphi e(u)_{i,j}.
\end{eqnarray}
Therefore,  pointwisely in $\mathbb{R}^N$ it holds the following estimate
$$|e(u_k \varphi_{k,1})| \leq  |u| |\nabla \varphi_{k,1}| +  |\varphi_{k,1}||e(u_k)|\leq \frac{1}{\eta_k} |u| + |e(u_{k})|$$
and the same holds true for $v_{k,2}$,
$$|e(u_k \varphi_{k,2})| \leq  \frac{1}{\eta_k} |u| + |e(u_{k})|.$$
Taking the square we get, for $i=1,2$,
\begin{eqnarray}
|e(v_{k,i})|^2 \leq \frac{1}{\eta_k^2} |u_k|^2 + 2\frac{1}{\eta_k}|u_k||e(u_k)|+  |e(u_{k})|^2. \label{corona}
\end{eqnarray}

Now remember that $v_{k,1}$ and $v_{k,2}$ have disjoint support, and that  their sum  coincide with $u_k$ outside $A_k$, in which we can use \eqref{corona} to estimate 
\begin{eqnarray}
\int_{\mathbb{R}^N} |e(u_k)|^2 \; dx - \int_{\mathbb{R}^N} |e(v_{k,1})|^2  \; dx &-& \int_{\mathbb{R}^N} |e(v_{k,2})|^2 \;dx \notag \\
 &\geq& -2\int_{A_k} \frac{1}{\eta_k^2} |u_k|^2 + 2\frac{1}{\eta_k}|u_k||e(u_k)| \;dx.
\end{eqnarray}
Since $\frac{1}{\eta_k}\to 0$ and both $u_k$ and $e(u_k)$ are uniformly bounded in $L^2$, we deduce that the term on the right-hand side converges to zero thus
\begin{eqnarray}
\liminf_{k \to +\infty}\int_{\mathbb{R}^N} |e(u_k)|^2 \; dx - \int_{\mathbb{R}^N} |e(v_{k,1})|^2  \; dx- \int_{\mathbb{R}^N} |e(v_{k,2})|^2 \;dx\geq 0. \label{liminf}
\end{eqnarray}
This allows to compare the Rayleigh quotient of $u_k$ with the one of $v_{k,1}+v_{k,2}$. More precisely,   using \eqref{liminf}, the standard inequality on real nonnegative numbers $a$, $b$ and positive numbers $c$, $d$,
$$\frac{a+b}{c+d}\geq \min\left\{\frac{a}{c}, \frac{b}{d}\right\},$$
and also \eqref{inin1} and \eqref{inin2}, we obtain  
\begin{eqnarray}
\lambda^*:=\inf_{|\Omega|\leq V}\Lambda(\Omega)&=&\lim_{k\to +\infty}\int_{\R^N} |e(u_k)|^2 \;dx \notag \\
&\geq& \liminf \int_{\mathbb{R}^N} |e(v_{k,1})|^2 \;dx+ \int_{\mathbb{R}^N} |e(v_{k,2})|^2 \;dx \notag \\
&=&\liminf \frac{\int_{\mathbb{R}^N} |e(v_{k,1})|^2+ \int_{\mathbb{R}^N} |e(v_{k,2})|^2}{\int_{\mathbb{R}^N} |v_{k,1}|^2 \; dx+ \int_{\mathbb{R}^N} |v_{k,2}|^2 \; dx} \label{return} \\
&\geq & \min \left\{ \liminf \frac{\int_{\mathbb{R}^N} |e(v_{k,1})|^2}{\int_{\mathbb{R}^N} |v_{k,1}|^2 \; dx}  \;, \;\liminf \frac{ \int_{\mathbb{R}^N} |e(v_{k,2})|^2}{ \int_{\mathbb{R}^N} |v_{k,2}|^2 \; dx}   \right\}. \label{yannickQ}
\end{eqnarray}
Notice that applying the concentration principle on the sequence $v_k^1$, we obtain that $v_k^1$ is in the compactness situation, with concentration value $\alpha$. In particular, arguing as in the compactness case, we can assume that  $v_k^1$ converges strongly in $L^2$ (and weakly in $H^1$) to a function $v\in H^1(\mathbb{R}^N)$. Then, if the minimum above is achieved for $v_{k,1}$, we deduce that the quasi-open set $\Omega^*= \{|v| > 0\}$ is an optimal domain, and the proof is concluded from the compactness situation. So we have to consider that it is not the case.

But then it means that $v$, being the $L^2$ limit of $v_k^1$, satisfies 
$$\frac{\int_{\R^N} |\nabla v|^2 \;dx}{\int_{\R^N} |v|^2 \;dx} > \lambda^*,$$
or differently,
 $$ \int_{\R^N} |\nabla v|^2 \;dx> \alpha \lambda^*.$$
 We also know that 
 $$\liminf \frac{ \int_{\mathbb{R}^N} |e(v_{k,2})|^2}{ \int_{\mathbb{R}^N} |v_{k,2}|^2 \; dx}=\lambda^*,$$
because  by assumption the minimum in \eqref{yannickQ} is achieved with the sequence $v_{k,2}$, and since  $\lim_{k\to +\infty}  \int_{\mathbb{R}^N} |v_{k,2}|^2 \; dx=1-\alpha$ we deduce that
 $$\liminf_{k\to +\infty} \int_{\mathbb{R}^N} |e(v_{k,2})|^2 = (1-\alpha)\lambda^*.$$

%\textcolor{red}{ **** Attention : ici c'est plut\^ot
% $$\liminf \int_{\mathbb{R}^N} |e(v_{k,2})|^2 \leq (1-\alpha)\lambda^*.$$
%non ? Ne faut-il pas g\'erer cela en adaptant Bucur-Varchon ? ************
%}
%
% {\color{cyan} Je pense que ma phrase d'explication que j'ai rajouté ci dessus répond à la question?}

Now returning back to \eqref{return}, we have obtained
$$\lambda^*\geq \liminf_{k\to +\infty} \frac{\int_{\mathbb{R}^N} |e(v_{k,1})|^2+ \int_{\mathbb{R}^N} |e(v_{k,2})|^2}{\int_{\mathbb{R}^N} |v_{k,1}|^2 \; dx+ \int_{\mathbb{R}^N} |v_{k,2}|^2 \; dx} =\frac{ \int_{\R^N} |\nabla v|^2 \;dx + \liminf_{k\to +\infty}  \int_{\mathbb{R}^N} |e(v_{k,2})|^2}{\alpha +  (1-\alpha)}>\lambda^*,$$
a contradiction. This achieves the proof of the Theorem.
\end{proof}

%%%%%%%%%%%%%%%%%%%%%%%%%%%%%%%%%%%%%%%%%%%%%%%%%%%%%%%%%

\subsection{Regularity}

The purpose of this section is  to prove that any quasi-open solution of our shape optimisation problem, is actually an open set.  We will achieve this conclusion only for $N=2$ or $N=3$. The reason is that we need an apriori $L^p$ bound on an eigenfunction which, up to our knowledge, is not know in any dimension (see also Remark \ref{bound}) below.

Here is a general regularity result valid in any dimension.

\begin{theorem} \label{open} Let $\Omega^* \subset \R^N$ be a quasi-open  solution to the problem

$$\min_{\Omega \in \mathcal{O} \; \text{ such that }\; |\Omega| \leq V} \Lambda(\Omega).$$

Assume moreover that $u \in L^p(\R^N)$ with $p >N$. Then  $u\in \mathscr{C}^{0,\alpha}(\R^N)$,  for all  $\alpha<1-\frac{N}{p}$.  As a consequence, $\Omega^*$ is an open set. 
\end{theorem}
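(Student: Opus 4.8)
\emph{Proof strategy.} Here $u$ denotes an eigenfunction associated with $\Lambda(\Omega^*)$, and (by the existence proof) we may take $\Omega^*=\{|u|>0\}$, so the theorem reduces to proving that $u$, extended by $0$ to $\R^N$, has a representative in $\mathscr{C}^{0,\alpha}(\R^N)$ for every $\alpha<1-\tfrac Np$: a continuous $u$ makes $\{|u|>0\}$ open. Normalize $\int_{\R^N}|u|^2=1$, so that $\int_{\R^N}|e(u)|^2=\Lambda^*:=\min_{|\Omega|\le V}\Lambda(\Omega)$, recall from Proposition~\ref{remarkK} that $u\in H^1(\R^N)^N$ with $2\int_{\R^N}|e(u)|^2=\int_{\R^N}\big(|\nabla u|^2+({\rm div}\,u)^2\big)$, and write $\mathrm{RQ}(v)$ for the Rayleigh quotient $\frac{\int_{\R^N}|e(v)|^2\,dx}{\int_{\R^N}|v|^2\,dx}$.

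The first — and main — step is a \emph{quasi-minimality} inequality: there are constants $C>0$, $r_0>0$ such that for every $x_0\in\R^N$, every $r\le r_0$ and every $v\in H^1(\R^N)^N$ with $v=u$ on $\R^N\setminus B_r(x_0)$,
\begin{equation}\label{e:qmsketch}
\int_{B_r(x_0)}|e(u)|^2\,dx\ \le\ \int_{B_r(x_0)}|e(v)|^2\,dx\ +\ C\,r^{\,N-2N/p}.
\end{equation}
To prove \eqref{e:qmsketch}, note that such a $v$ belongs to $H^1_0(\Omega^*\cup B_r(x_0))^N$ and $|\Omega^*\cup B_r(x_0)|\le V+|B_r(x_0)|$; hence the dilation $v_t:=v(\cdot/t)$ with $t^N=V/(V+|B_r(x_0)|)$ is admissible for $\min\{\Lambda(\Omega):|\Omega|\le V\}$, so that by the scaling $\Lambda(t\Omega)=\Lambda(\Omega)/t^2$ we get $\Lambda^*\le t^{-2}\,\mathrm{RQ}(v)$, i.e. $\mathrm{RQ}(v)\ge(1-C r^N)\Lambda^*$. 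Unwinding the Rayleigh quotient, using $\int_{\R^N}|e(u)|^2=\Lambda^*$ and controlling the $L^2$-deficit by $\int_{B_r(x_0)}|u|^2\le\|u\|_{L^p(\R^N)}^2\,|B_r(x_0)|^{1-2/p}=C\,r^{\,N-2N/p}$ then yields \eqref{e:qmsketch}. This is exactly where the hypothesis $p>N$ enters: it forces $N-2N/p\in(N-2,N)$, so that the geometric error $O(r^N)$ produced by the possible increase of measure is negligible compared with $r^{\,N-2N/p}$.

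The second step is a Campanato-type decay at an arbitrary $x_0$. By Proposition~\ref{remarkK} (applied to $u$ and to $v$, which agree outside $B_r(x_0)$), \eqref{e:qmsketch} is equivalent to $D_r(u)\le D_r(v)+2C r^{\,N-2N/p}$, where $D_r(w):=\int_{B_r(x_0)}\big(|\nabla w|^2+({\rm div}\,w)^2\big)$. Let $h_r$ be the unique minimizer of $D_r$ among competitors agreeing with $u$ outside $B_r(x_0)$: it solves the constant-coefficient Lamé system $\mu\Delta h_r+(\lambda+\mu)\nabla({\rm div}\,h_r)=0$ in $B_r(x_0)$, which is strongly elliptic, so $h_r$ and $\nabla h_r$ are smooth inside and satisfy the interior estimate $\sup_{B_{r/2}(x_0)}|\nabla h_r|^2\le C r^{-N}\int_{B_r(x_0)}|\nabla h_r|^2$. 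Testing the Euler--Lagrange equation of $h_r$ against $u-h_r\in H^1_0(B_r(x_0))^N$ gives $D_r(u)=D_r(h_r)+D_r(u-h_r)$, so $\int_{B_r(x_0)}|\nabla(u-h_r)|^2\le D_r(u-h_r)\le 2C r^{\,N-2N/p}$ by quasi-minimality; combining this with the interior estimate and $D_r(h_r)\le D_r(u)$ one obtains
\begin{equation*}
\int_{B_\rho(x_0)}|\nabla u|^2\,dx\ \le\ C\Big(\tfrac{\rho}{r}\Big)^{N}\int_{B_r(x_0)}|\nabla u|^2\,dx\ +\ C\,r^{\,N-2N/p}
\end{equation*}
for all $0<\rho\le r\le r_0$, with $C$ independent of $x_0$. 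Since $N-2N/p<N$, the standard iteration lemma gives $\int_{B_\rho(x_0)}|\nabla u|^2\le C\rho^{\,N-2N/p}$ uniformly in $x_0$, and as $N-2N/p=N-2+2\big(1-\tfrac Np\big)$, Morrey's embedding yields $u\in\mathscr{C}^{0,\alpha}(\R^N)$ for every $\alpha<1-\tfrac Np$. As noted above, $\Omega^*=\{|u|>0\}$ is then open.

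I expect the crux to be the careful derivation of \eqref{e:qmsketch}: one must combine the dilation device with the $L^2$-normalization into a clean comparison inequality and verify that the perturbed competitor $v$ can be rendered $L^2$-admissible at the cost of an $O(r^N)$ error only — the point where the $L^p$-bound on $u$ is used. The energy decay for $h_r$ is classical, resting on strong ellipticity of the isotropic elasticity operator, and the final implication, from the Morrey bound on $\nabla u$ to openness, is routine.
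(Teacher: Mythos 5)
Your proposal is correct, and its second half coincides with the paper's own argument: comparing $u$ on each ball with the homogeneous Lam\'e replacement, using the interior gradient estimate furnished by strong ellipticity, the Pythagoras identity, and the quasi-minimality defect $Cr^{N-2N/p}$ to obtain the Campanato-type decay, then iterating and invoking Morrey/Campanato is exactly Proposition \ref{aga} with $\gamma=N-\frac{2N}{p}$ (your appeal to the standard iteration lemma even avoids the arbitrarily small loss of exponent that the paper's dyadic argument accepts, which is harmless since only $\alpha<1-\frac Np$ is claimed). Where you genuinely differ is in how the quasi-minimality inequality is derived. The paper first proves equivalence with a penalized problem (Proposition \ref{penalization}), showing that the constrained minimizer also minimizes $F_\lambda$ for every $\lambda>\lambda_V/V$, and then specializes to local perturbations in Proposition \ref{quasiminimal}; you obtain the same ball-by-ball inequality directly, observing that a competitor $v$ agreeing with $u$ outside $B_r$ lies in $H^1_0(\Omega^*\cup B_r)$, rescaling $\Omega^*\cup B_r$ by $t=\big(V/(V+|B_r|)\big)^{1/N}$ to restore the volume constraint, and using $\Lambda(t\Omega)=\Lambda(\Omega)/t^2$ together with the H\"older bound $\int_{B_r}|u|^2\le \|u\|_{L^p}^2|B_r|^{1-2/p}$ for the $L^2$-normalization defect — incidentally the same dilation device the paper uses inside the proof of Proposition \ref{penalization}, but applied to $u$ itself rather than to the penalized minimizer $u_\lambda$. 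Your route is shorter and bypasses the existence of a minimizer for $F_\lambda$ and the threshold $\lambda_V/V$; the paper's penalization buys a stronger, global statement (a comparison valid for arbitrary $v\in H^1(\R^N)$ with an explicit penalization constant), which is of independent interest but not needed for Theorem \ref{open}. Two cosmetic remarks: with the normalization $\mu=\tfrac12$, $\lambda=0$ used in this section, the Euler--Lagrange system of $D_r$ is $\Delta h_r+\nabla(\operatorname{div}h_r)=0$ rather than the general-coefficient form you wrote (immaterial, since either system is strongly elliptic and the interior estimate applies); and, exactly as in the paper, the final assertion that $\Omega^*$ is open is really the statement that $\{|u|>0\}$ is an open optimal set, which is the reading both you and the paper adopt.
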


In particular in dimension $N=2$  and $N=3$ we obtain the following.

\begin{corollary} Assume that the dimension  $N=2$ or $N=3$. Then for all $V>0$ there exists an open solution $\Omega^*$ for the problem
$$\min \left\{ \Lambda(\Omega) \; , \; \Omega\subset \R^N \text{ open set such that }\; |\Omega| \leq V \right\}.$$
Moreover, any eigenfunction associated with $\Lambda(\Omega^*)$   belongs to $\mathscr{C}^{0,\alpha}(\R^N)$ for all $\alpha<1$ if $N=2$ and for all $\alpha<\frac{1}{2}$ if $N=3$.
 \end{corollary}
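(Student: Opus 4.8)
The plan is to deduce the corollary directly by combining the existence result (Theorem~\ref{existence}), the general regularity criterion (Theorem~\ref{open}) and the Sobolev embedding, the key point being that the critical Sobolev exponent $2^*=2N/(N-2)$ is strictly larger than $N$ precisely when $N\in\{2,3\}$ (with the convention $2^*=+\infty$ for $N=2$).

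First I would invoke Theorem~\ref{existence} to obtain a quasi-open minimizer $\Omega^*\in\mathcal{O}$ with $|\Omega^*|\le V$, and pick any eigenfunction $u$ associated with $\Lambda(\Omega^*)$; by the very definition of $H^1_0$ of a quasi-open set, $u\in H^1_0(\Omega^*)^N\subset H^1(\R^N)^N$. Then I would apply the Sobolev embedding componentwise: when $N=3$ one has $H^1(\R^3)\hookrightarrow L^6(\R^3)$, so $u\in L^6(\R^N)^N$ with $6>3=N$; when $N=2$ one has $H^1(\R^2)\hookrightarrow L^p(\R^2)$ for every finite $p\ge 2$, so $u\in L^p(\R^N)^N$ for every $p<+\infty$. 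In either case the hypothesis of Theorem~\ref{open} holds for some $p>N$, hence $u\in\mathscr{C}^{0,\alpha}(\R^N)$ for every $\alpha<1-\frac{N}{p}$ and $\Omega^*$ is an open set. It then remains only to optimize the exponent: for $N=3$ the best admissible choice is $p=6$, yielding $\alpha<1-\frac{3}{6}=\frac12$; for $N=2$, letting $p\to+\infty$ gives membership in $\mathscr{C}^{0,\alpha}(\R^N)$ for every $\alpha<1$. Since the argument used nothing about $u$ beyond $u\in H^1_0(\Omega^*)^N$, it applies verbatim to an arbitrary eigenfunction associated with $\Lambda(\Omega^*)$, which gives the last sentence of the statement.

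At this point there is no genuine obstacle left: all the difficulty has been absorbed into Theorem~\ref{open}, whose proof via Lamé quasi-minimizers and their global Hölder continuity is the technical core of the section. The only points requiring some care are that the dimensional restriction $N\le 3$ is exactly what forces $2^*>N$ — for $N\ge4$ one only has $2^*=2N/(N-2)\le N$, so that an a priori $L^p$ bound with $p>N$ (not available to us) would be needed instead, as discussed just before Theorem~\ref{open} — and, for $N=2$, that the passage $p\to+\infty$ is performed only at the level of the Hölder exponent $1-N/p$: the corresponding seminorm bounds degenerate as $p\to\infty$, but this is immaterial since membership in $\mathscr{C}^{0,\alpha}$ is claimed separately for each fixed $\alpha<1$.
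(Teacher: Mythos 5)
Your proposal is correct and follows essentially the same route as the paper: Sobolev embedding gives $u\in L^p(\R^N)^N$ with $p>N$ exactly when $N=2,3$ (with $p=6=2^*$ for $N=3$ and $p$ arbitrary for $N=2$), and then the quasi-minimizer regularity machinery (which you invoke through Theorem~\ref{open}, while the paper cites Propositions~\ref{quasiminimal} and~\ref{aga} directly — the same content) yields $\mathscr{C}^{0,\alpha}$ with $\alpha<1-N/p$, hence the stated exponents and the openness of $\Omega^*$. Your remarks on why $N\le 3$ is needed and on taking $p\to\infty$ only at the level of the exponent are accurate and consistent with the paper's discussion.
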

 
\begin{proof} Let $u$ be an eigenfunction associated with $\Lambda(\Omega^*)$. Since $u\in H^1(\R^N)$, by the Sobolev embedding we know that $u \in L^{p}(\R^N)$ with $p$ arbitrary large for $N=2$ or $p=2^*=\frac{2N}{N-2}$ for $N>2$.  Then by Proposition \ref{quasiminimal} below
we know that $u$ is a Lam\'e quasi-minimizer with exponent $\gamma=N-\frac{2N}{p}$. To conclude that $u\in \mathscr{C}^{0,\alpha}$ we would  need 
that $p>N$. This is true for $N=2$ or $N=3$. For $N=2$ we deduce from  Proposition \ref{aga}  that $u \in \mathscr{C}^{0,\alpha}$ for all $\alpha<1$. If $N=3$ we deduce, from  Proposition \ref{aga},  that $u \in \mathscr{C}^{0,\alpha}$ for all $\alpha <\frac{1}{2}$.
\end{proof}

 \begin{remark} \label{bound} Let us  stress that the conclusion of Theorem \ref{open} does not imply that $u\in L^\infty(\R^N)$. In other words by $u\in \mathscr{C}^{0,\alpha}(\R^N)$ we mean, that for a representative of $u$ it holds    $|u(x)-u(y)|\leq C|x-y|^\alpha$ which is enough to prove that $u$ is continuous.  Since $\Omega^*$ may not be a bounded set, we do not conclude that $u$ is bounded. Let us mention that in the scalar case it is well known that any eigenfunction associated to the first eigenvalue of the Dirichlet Laplacian belongs to $L^\infty(\R^N)$ together with the following nice bound, for which one usually refers to \cite[Example 2.1.8]{zbMATH00194234}: 
$$\|u\|_{L^\infty}\leq e^{\frac{1}{8\pi}}\lambda_1^D(\Omega)^{\frac{N}{4}} \|u\|_2.$$
It would be very interesting to know whether a similar bound is true for the Lam\'e eigenfunctions. This would directly imply the existence of an open solution in any dimension.
\end{remark}

The strategy of proof for Theorem \ref{open} is inspired by the seminal paper of Brian\c{c}on, Hayouni and Pierre \cite{briancon}, also declined later in different directions, see for instance \cite{lamboleybri,mazzoleni}. The general approach involves showing that a solution to the original problem is also a solution to a penalized version of the problem. We then exploit the regularity theory for free-boundary-type problems to conclude that the eigenfunction is globally Hölder continuous. In our case, however, this strategy requires a non-trivial adaptation due to the presence of the symmetrized gradient.
For instance in \cite{briancon}, the first step is to use a truncated test function and the co-area formula, which are not available for the symmetric gradient, thus in our context we have to argue differently from \cite{briancon}. In particular we are not able to arrive up to Lipschitz regularity but merely continuous regularity, which is enough to conclude that the optimal set is open.%%%%%%%%%%%%%%%%%%%%%%%%%%%%%
\subsubsection{Equivalence with a penalized problem}

%%%%%%%%%%%%%%%%%%%%%%%%%%%%%

\begin{proposition}  \label{penalization}Let $V>0$ be given and $u$ be a solution for the problem 
\begin{eqnarray}
\lambda_V:=\min \left\{ \int_{\R^N}|e(u)|^2 dx   \quad  {\text s.t. } \quad  u \in H^1(\R^N), \; \int_{\R^N} |u|^2=1, \;\text{ and } \; |\{|u|>0\}|\leq V \right\}. \label{problemu}
\end{eqnarray}
Then for all $\lambda>\frac{\lambda_V}{V}$, and  all $v\in H^1(\R^N)$ we have
\begin{eqnarray}
\int_{\R^N}|e(u)|^2 \; dx \leq \int_{\R^N} |e(v)|^2 dx +\lambda_V\left(1-\int_{\R^N} |v|^2\right)^+ + \lambda \Big( |\{|v|>0\}|-V\Big)^+. \label{problemlam}
\end{eqnarray}
\end{proposition}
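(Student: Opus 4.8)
The plan is to prove the penalization inequality \eqref{problemlam} by a standard competitor/truncation argument: given an arbitrary $v\in H^1(\R^N)$, I want to modify it (by rescaling its $L^2$ norm and, if necessary, chopping off a piece of its support) to produce an admissible competitor for the constrained problem \eqref{problemu}, and then bound the loss incurred by these modifications by the two penalization terms on the right-hand side. Two cases should be distinguished according to whether $|\{|v|>0\}|\le V$ or $|\{|v|>0\}|>V$.

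\textbf{Case 1: $|\{|v|>0\}|\le V$.} Here the support constraint is already satisfied. If moreover $\int_{\R^N}|v|^2\ge 1$ we may simply normalize $\tilde v:=v/\|v\|_{L^2}$, which is admissible, so $\lambda_V=\int|e(u)|^2\le \int|e(\tilde v)|^2=\frac{1}{\|v\|_2^2}\int|e(v)|^2\le \int|e(v)|^2$ and we are done since the extra terms are nonnegative. If $\int|v|^2<1$, the idea is to add a small ``disjoint bump'': pick a ball $B$ far away and disjoint from $\{|v|>0\}$, and a scalar first-Dirichlet-type profile $w$ on $B$ (say one component of an eigenfunction, or simply $\psi\cdot e_1$ for $\psi\in H^1_0(B)$) scaled so that $\int|v|^2+\int|w|^2=1$; because the supports are disjoint, $e(v+w)=e(v)+e(w)$ pointwise and $\int|e(v+w)|^2=\int|e(v)|^2+\int|e(w)|^2$. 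Choosing $B$ with $|B|$ as small as we like while keeping $\int|w|^2=1-\int|v|^2$ forces $\int|e(w)|^2$ to be as large as a constant times $(1-\int|v|^2)$ over $|B|^{2/N}$ — which blows up, the wrong direction. So instead the right move is the scaling trick: consider $\tilde v := v$ restricted/rescaled so that $\int|e(v)|^2$ is compared to $\lambda_V$ against the $L^2$ deficit directly. Concretely, since $v$ itself (after no change) satisfies $|\{|v|>0\}|\le V$, it is an admissible competitor for the \emph{un-normalized} problem $\inf\{\int|e(\cdot)|^2 : \int|\cdot|^2= t,\ |\{\cdot\ne0\}|\le V\}=t\,\lambda_V$ only when $t=\int|v|^2$; hence $\int|e(v)|^2\ge \big(\int|v|^2\big)\lambda_V$, i.e. $\int|e(v)|^2 \ge \lambda_V - \lambda_V(1-\int|v|^2)\ge \lambda_V - \lambda_V(1-\int|v|^2)^+$, which is exactly \eqref{problemlam} in this sub-case (the support term vanishing). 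This homogeneity observation — that the constrained infimum scales linearly in the mass because $|e(\cdot)|^2$ is $2$-homogeneous while the support is $0$-homogeneous — is the crux of Case 1.

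\textbf{Case 2: $|\{|v|>0\}|>V$.} Now I must cut the support down to measure $\le V$. The natural device is to remove the region where $|v|$ is smallest: for $t>0$ let $v_t:=(|v|-t)^+\,\frac{v}{|v|}$ (componentwise this keeps $v_t\in H^1$, with $e(v_t)$ controlled — this is the delicate point for the symmetrized gradient, see below). As $t$ increases, $|\{|v_t|>0\}|=|\{|v|>t\}|$ decreases continuously from $|\{|v|>0\}|$ to $0$, so there is $t_0$ with $|\{|v_{t_0}|>0\}|= V$ (or $\le V$, if there is a jump). Then $v_{t_0}$ has admissible support; I normalize it and apply the definition of $\lambda_V$: $\lambda_V\le \frac{\int|e(v_{t_0})|^2}{\int|v_{t_0}|^2}$. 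The task is then to show $\int|e(v_{t_0})|^2\le \int|e(v)|^2$ (truncation decreases the symmetric-gradient energy — true because $v_{t_0}$ is a ``$1$-Lipschitz radial contraction'' of $v$ in the target, and such contractions do not increase $\int|e(\cdot)|^2$; this needs a short computation with $e(v_t)_{ij}$) and that $1-\int|v_{t_0}|^2 \le \lambda^{-1}(\!|\{|v|>0\}|-V)\cdot(\text{something})$ — more precisely, choosing the threshold $t_0$ so that the removed mass is controlled, one gets $\int|v|^2-\int|v_{t_0}|^2\lesssim$ (measure removed), and combining with $\lambda>\lambda_V/V$ produces the term $\lambda(|\{|v|>0\}|-V)^+$. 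The careful bookkeeping of these two small losses against the single penalization term $\lambda(|\{|v|>0\}|-V)^+$, using $\lambda>\lambda_V/V$ exactly once, is where the hypothesis on $\lambda$ is consumed.

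\textbf{Main obstacle.} The principal difficulty, and what distinguishes this from the scalar Laplacian case of \cite{briancon}, is controlling $\int_{\R^N}|e(v_t)|^2$ under the truncation $v\mapsto (|v|-t)^+ v/|v|$: one cannot invoke the coarea formula as for $\int|\nabla v|^2$, and the chain rule for $e(\cdot)$ on the nonlinear map $v\mapsto (|v|-t)^+v/|v|$ is genuinely more involved since $e$ does not see $v$ componentwise. The key algebraic fact to establish is that this map is an orthogonal-type contraction: writing $v=\rho\,\theta$ with $\rho=|v|$, $|\theta|=1$, one has $v_t = (\rho-t)^+\theta$, and a direct expansion of $e(v_t)_{ij}$ in terms of $e(v)_{ij}$, $\nabla\rho$ and $\nabla\theta$ shows $|e(v_t)|^2\le |e(v)|^2$ pointwise on $\{\rho>t\}$ (and $=0$ elsewhere), because the radial part is $1$-Lipschitz-contracted and the angular part is unchanged but multiplied by the smaller factor $(\rho-t)^+\le\rho$. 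Once this pointwise inequality is in hand, the rest of the argument is the routine normalization-and-cases bookkeeping sketched above.
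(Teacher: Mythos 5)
Your Case 1 is correct: when $|\{|v|>0\}|\le V$, normalizing $v$ and using the $2$-homogeneity of $\int|e(\cdot)|^2$ gives $\int|e(v)|^2\ge \lambda_V\int|v|^2$, which yields \eqref{problemlam} directly. The genuine gap is in Case 2, and it sits exactly at the point you yourself flag as delicate: the claimed pointwise inequality $|e(v_t)|\le |e(v)|$ for the truncation $v_t=(|v|-t)^+\,v/|v|$ is false. The symmetric gradient does not obey any such contraction/chain-rule principle: take the infinitesimal rotation $v(x)=(-x_2,x_1)$ in $\R^2$, for which $e(v)\equiv 0$; on $\{|x|>t\}$ one has $v_t=(1-t/|x|)v$, hence $e(v_t)=\frac{t}{|x|^3}\,\mathrm{sym}(v\otimes x)$ and $|e(v_t)|^2=\frac{t^2}{2|x|^2}>0$. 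More generally, in your own expansion $|e(v)|^2-|e(v_t)|^2=2t\,A\!:\!B+t(2\rho-t)|B|^2$ with $A=\mathrm{sym}(\theta\otimes\nabla\rho)$, $B=\mathrm{sym}(\nabla\theta)$, and the cross term $A\!:\!B$ can be negative and dominant, so the sign cannot be controlled. This is precisely why the paper states that the truncation/co-area route of Brian\c{c}on--Hayouni--Pierre is unavailable for $e(u)$ and argues differently. A secondary weakness is that even granting (a), your bookkeeping for the removed $L^2$ mass against the single term $\lambda(|\{|v|>0\}|-V)^+$ is only sketched; the threshold $t_0$ depends on the level sets of $v$ and is not controlled by $|\{|v|>0\}|-V$ alone, so the place where $\lambda>\lambda_V/V$ is ``consumed'' is not actually established.

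For comparison, the paper's proof avoids any surgery on a general competitor $v$. It introduces the penalized functional $F_\lambda(v)=\int|e(v)|^2+\lambda_V(1-\int|v|^2)^++\lambda(|\{|v|>0\}|-V)^+$, shows it admits a minimizer $u_\lambda$, and then rules out $|\{|u_\lambda|>0\}|>V$ by comparing $u_\lambda$ with the dilated function $x\mapsto u_\lambda(tx)$, $t=(|\{|u_\lambda|>0\}|/V)^{1/N}\ge 1$: dilation scales $\int|e(\cdot)|^2$ by $t^{2-N}\le 1$ and fixes the support measure at $V$, and the resulting inequality forces $\lambda\le \lambda_V/V$, a contradiction. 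Once $|\{|u_\lambda|>0\}|\le V$, the two inequalities $F_\lambda(u_\lambda)\le F_\lambda(u)=\lambda_V$ and $\int|e(u_\lambda)|^2\ge\lambda_V\int|u_\lambda|^2$ (your Case 1 observation, applied to $u_\lambda$) give $F_\lambda(u)=F_\lambda(u_\lambda)$, i.e.\ $u$ minimizes $F_\lambda$, which is \eqref{problemlam}. If you want to salvage a direct argument, you must replace your truncation step by an operation compatible with $e(\cdot)$, such as this dilation; as written, Case 2 does not go through.
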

\begin{proof}For $v\in H^1(\R^N)$ and $\lambda>0$ we introduce
$$F_\lambda(v):=\int_{\R^N} |e(v)|^2 dx +\lambda_V\left(1-\int_{\R^N} |v|^2\right)^+ + \lambda\Big( |\{|v|>0\}|-V\Big)^+.$$
We first notice that, arguing as in Theorem \ref{existence}, $F_\lambda$ admits a minimizer $u_\lambda \in H^1(\R^N)$.   

Our next goal is to prove that for $\lambda$ large enough, then $|\{|u_\lambda|>0\}|\leq V$.  Assume for a contradiction that  $|\{|u_\lambda|>0\}|> V$. In the sequel we will write $\Omega:=\{|u_\lambda|>0\}$. Then %we assume without loss of generality that $0\in \{|u_\lambda|>0\}$ and 
we compare $u_\lambda$ with the competitor $v:= u_\lambda(t x)$ with the choice
$$t:=\left(\frac{|\Omega|}{V}\right)^{\frac{1}{N}}\geq 1.$$
Since $u_\lambda \in H^1_0(\Omega)$, it follows that $v\in H^1_0(\frac{1}{t}\Omega)$. Moreover,
$$|\{|v|>0\}|=|\frac{1}{t}\Omega|=\frac{1}{t^N}|\Omega|=V.$$
 Next, we use that $u_\lambda$ is a minimizer thus
$$F_{\lambda}(u_\lambda) \leq F_\lambda(v),$$
which yields in particular,
\begin{eqnarray}
\int_{\R^N}|e(u_\lambda)|^2 \;dx + \lambda\big( |\{|u_\lambda|>0\}|-V\big)^+\leq  F_\lambda(u_\lambda)\leq \int_{\Omega} |e(v)|^2 \; dx +\lambda_V\left(1-\int_{\R^N} |v|^2\right)^+  \label{test1}
 \end{eqnarray}
 because
$$\big( |\{|v|>0\}|-V\big)^+=0.$$
Now notice that 
$$\int_{\Omega} |e(v)|^2 \; dx =t^{2-N}  \int_{\R^N} |e(u_\lambda)|^2 \;dx= \left(\frac{|\{|u_\lambda|>0\}|}{V}\right)^{\frac{2-N}{N}} \int_{\R^N} |e(u_\lambda)|^2 \;dx \leq  \int_{\R^N} |e(u_\lambda)|^2 \;dx,$$ 
and 
$$\int_{\R^N} |v|^2=t^{-N} \int_{\R^N} |u_\lambda|^2=\frac{V}{|\{|u_\lambda|>0\}|}.$$
Therefore, we deduce from \eqref{test1}   that 
\begin{eqnarray}
 \lambda \Big( |\{|u_\lambda|>0\}|-V\Big)^+%&\leq& C_0\left[ \left(\frac{|\{|u_\lambda|>0\}|}{V}\right)^{\frac{2-N}{N}}  -1 \right]+\lambda_V\left(1-\frac{V}{|\{|u_\lambda|>0\}|}\right)^+ \notag \\
 &\leq& \lambda_V\left(1-\frac{V}{|\{|u_\lambda|>0\}|}\right)^+. \label{test2}
 \end{eqnarray}
But then we obtain a bound $\lambda\leq M_0$, where 
 $$M_0=\max_{s \geq V}\left(\lambda_V \frac{1}{s-V}\left(1-\frac{V}{s}\right)\right) =\max_{s \geq V} \frac{\lambda_V}{s}= \frac{\lambda_V}{V}.$$
We arrive to the conclusion that  for $\lambda >\frac{\lambda_V}{V}$, then we necessarily have
$$|\{|u_\lambda|>0\}|\leq V.$$

Now let us fix $\lambda >\frac{\lambda_V}{V}$, and finish the proof of the Proposition. We denote by $u$ the minimizer for the problem \eqref{problemu}, and we pick any $v\in H^1(\R^N)$. From the inequality 
$$F_\lambda(u_\lambda)\leq F_\lambda(u),$$
and the fact that   $|\{|u>0\}|\leq V$ and  $\int_{\R^N}|u|^2=1$, we deduce that 

$$F_\lambda(u_\lambda)\leq \int_{\R^N} |e(u)|^2 dx.$$

On the other hand by the definition of $\lambda_V$, we know that 
$$\int_{\R^N}|e(u_\lambda)|^2 \;dx -\lambda_V \int_{\R^N} |u_\lambda|^2 \;dx \geq 0,$$
so that
$$F_\lambda(u)=\int_{\R^N}|e(u)|^2 \;dx =\lambda_V  \leq \int_{\R^N}|e(u_\lambda)|^2 \;dx + \lambda_V\left(1- \int_{\R^N} |u_\lambda|^2 \;dx\right) \leq F_\lambda(u_\lambda),$$
where for the last inequality we have used that $|\{|u_\lambda|>0\}|\leq V$.
All together we have proved that 
$$F_\lambda(u)=F_\lambda(u_\lambda),$$
and therefore $u$ is also a minimizer of $F_\lambda$. But then \eqref{problemlam} holds true for every $v\in H^1(\R^N)$ because $F_\lambda(u)=\int_{\R^N} |e(u)|^2\;dx$ and $F_\lambda(u)\leq F_\lambda(v)$. This achieves the proof of the proposition.
 \end{proof}

%%%%%%%%%%%%%%%%%%%%%%%%%%%%%%%%%%%%%%
\subsubsection{Lam\'e quasi-minimizers}
%%%%%%%%%%%%%%%%%%%%%%%%%%%%%%%%%%%%%%

In order to investigate the regularity properties of an optimal domain we introduce the following definition. 

\begin{definition}[Quasi-minimizer] \label{defQuasi}We say that $u \in H^1(\R^N)^N$ is a  quasi-minimizer  for the Lam\'e energy if and only if   $u$ satisfies the following minimality property: there exists $C>0$ and  $\gamma>0$ such that  for all ball $B_r\subset \R^N$ of radius $r \in (0,1)$ and for all $v\in H^1(\R^N)^N$ such that $u=v$ on $\R^N\setminus B_r$ we have
\begin{eqnarray}
\int_{B_r}|\nabla u|^2 +({\rm div}(u))^2  \; dx \leq \int_{B_r} |\nabla v|^2 +({\rm div}(v))^2dx +   Cr^\gamma. \label{problemlam2}
\end{eqnarray}
\end{definition}

The definition is motivated by the following observation.

\begin{proposition}\label{quasiminimal}Let  $u$ be a solution for the problem 
\begin{eqnarray}
\min \left\{ \int_{\R^N}|e(u)|^2 dx   \quad  {\text s.t. } \quad  u \in H^1(\R^N)^N, \; \int_{\R^N} |u|^2=1, \;\text{ and } \; |\{|u|>0\}|\leq V \right\}. \label{problemu2}
\end{eqnarray}
Assume moreover that $u\in L^p(\R^N)^N$ with $p\geq 2$. Then $u$ is a quasi-minimizer for the Lam\'e energy in the sense of Definition~\ref{defQuasi} with $\gamma=N-\frac{2N}{p}$ .
\end{proposition}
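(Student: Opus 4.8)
The plan is to use the penalization result, Proposition~\ref{penalization}, which says that the minimizer $u$ satisfies, for a fixed $\lambda>\lambda_V/V$ and all $v\in H^1(\R^N)^N$,
\begin{equation}\label{plan:penal}
\int_{\R^N}|e(u)|^2 \, dx \leq \int_{\R^N} |e(v)|^2 \, dx +\lambda_V\left(1-\int_{\R^N} |v|^2\right)^+ + \lambda \big( |\{|v|>0\}|-V\big)^+ .
\end{equation}
Given a ball $B_r$ with $r\in(0,1)$ and a competitor $v\in H^1(\R^N)^N$ agreeing with $u$ outside $B_r$, I plug $v$ into \eqref{plan:penal}. First I would use Korn's identity from Proposition~\ref{remarkK}, $2\int|e(w)|^2 = \int |\nabla w|^2 + (\mathrm{div}\,w)^2$, valid both for $u$ and $v$ (both in $H^1(\R^N)^N$), to rewrite the $e$-terms as the $\nabla$-plus-divergence terms appearing in Definition~\ref{defQuasi}; since $u=v$ outside $B_r$, the contributions outside $B_r$ cancel and only the integrals over $B_r$ remain. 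So it suffices to control the two correction terms on the right of \eqref{plan:penal} by $Cr^\gamma$.

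The term $\lambda\big(|\{|v|>0\}|-V\big)^+$ is handled simply: $\{|v|>0\}\subset \{|u|>0\}\cup B_r$, so $|\{|v|>0\}|\le |\{|u|>0\}| + |B_r| \le V + \omega_N r^N$, hence this term is bounded by $\lambda\omega_N r^N$, which is $\le C r^\gamma$ as soon as $\gamma\le N$ (and indeed $\gamma = N - 2N/p \le N$). The term $\lambda_V\big(1-\int|v|^2\big)^+$ is the one that requires the $L^p$ assumption: since $\int|u|^2 = 1$ and $u=v$ off $B_r$, we have $1-\int_{\R^N}|v|^2 = \int_{B_r}|u|^2 - \int_{B_r}|v|^2 \le \int_{B_r}|u|^2$, and by Hölder's inequality with exponents $p/2$ and its conjugate,
\begin{equation*}
\int_{B_r}|u|^2 \, dx \le \|u\|_{L^p(\R^N)}^2 \, |B_r|^{1-2/p} = \|u\|_{L^p}^2 \,\omega_N^{1-2/p}\, r^{N(1-2/p)} = C r^{N - 2N/p},
\end{equation*}
which gives exactly the exponent $\gamma = N - 2N/p$. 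Combining the two estimates, \eqref{plan:penal} yields $\int_{B_r}|\nabla u|^2 + (\mathrm{div}\,u)^2 \le \int_{B_r}|\nabla v|^2 + (\mathrm{div}\,v)^2 + Cr^\gamma$ for all admissible $v$, which is precisely \eqref{problemlam2}.

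The only mild subtlety — and the step I would be most careful about — is the bookkeeping in applying Korn's identity on all of $\R^N$ to both $u$ and $v$ and checking that the difference of the full-space integrals really does reduce to the difference of the $B_r$-integrals; this is where one must invoke Proposition~\ref{remarkK} to guarantee that $\nabla u,\nabla v\in L^2(\R^N)$ so that all integrals are finite and the cancellation outside $B_r$ is legitimate. Everything else is a one-line Hölder estimate. One should also note that $\gamma = N - 2N/p > 0$ precisely when $p>2$; for $p=2$ the statement is vacuous (or one takes $\gamma=0$, in which case it is trivially contained in the minimality of $u$), so the hypothesis $p\ge 2$ with $\gamma = N - 2N/p$ is exactly what the argument produces.
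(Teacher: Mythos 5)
Your proposal is correct and follows essentially the same route as the paper: invoke Proposition~\ref{penalization}, rewrite the symmetrized-gradient energies via the Korn identity of Proposition~\ref{remarkK}, cancel the contributions outside $B_r$, bound the volume penalty by $\lambda\omega_N r^N\leq Cr^\gamma$, and control $\left(1-\int_{\R^N}|v|^2\right)^+\leq \int_{B_r}|u|^2$ by H\"older with exponent $p/2$, yielding exactly $\gamma=N-\tfrac{2N}{p}$. Your closing remarks (finiteness of the full-space integrals to justify the cancellation, and the degeneracy of the case $p=2$) are accurate but only refinements of the same argument.
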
  

\begin{proof}By  Proposition \ref{penalization} we already know that $u$ satisfies the following minimality property: for all $v\in H^1(\R^N)^N$ we have
\begin{eqnarray}
\int_{\R^N}|e(u)|^2 \; dx \leq \int_{\R^N} |e(v)|^2 dx +\lambda_V\left(1-\int_{\R^N} v^2\right)^+ + \lambda \Big( |\{|v|>0\}|-V\Big)^+,\label{problemlam2ter}
\end{eqnarray}
or equivalently, using \eqref{kornkornkorn}, 

\begin{eqnarray}
\frac{1}{2}\int_{\R^N}|\nabla u|^2 + ({\rm div}(u))^2 \; dx \notag & \leq& \frac{1}{2}\int_{\R^N} |\nabla v|^2 + ({\rm div}(v))^2\, dx  \notag \\
&&+\lambda_V\left(1-\int_{\R^N} v^2\right)^+ + \lambda \Big( |\{|v|>0\}|-V\Big)^+.\label{problemlam2bis}
\end{eqnarray}

Then let $v$ be equal to $u$ outside $B_r$, so that the volume $|\{|v|>0\}|$ has at most increased by $\omega_Nr^N$, and since $\int_{\R^N} |u|^2 \;dx=1$, we can compute
$$1-\int_{\R^N} |v|^2 =1- \int_{\R^N}|u|^2 + \int_{\R^N} |u|^2 - \int_{\R^N} |v|^2 =\int_{B_r} |u|^2 - \int_{B_r} |v|^2. $$

 In other words from the minimality of $u$ we obtain
\begin{eqnarray}
\int_{B_r}|\nabla u|^2 + ({\rm div}(u))^2  \; dx  \leq  \int_{B_r} |\nabla v|^2 + ({\rm div}(v))^2dx +C\left(  \int_{B_r} |u|^2 - \int_{B_r} |v|^2  \right)^+ + Cr^{N}.\notag
\end{eqnarray}
If moreover $u \in L^p(\R^N)^N$ with $p\geq 2$, then denoting by $q$ the exponent satisfying $2q=p$ we can estimate
$$\left(  \int_{B_r} |u|^2 - \int_{B_r} |v|^2  \right)^+\leq  \int_{B_r} |u|^2 \leq |B_r|^{\frac{1}{q'}}\left(\int_{B_r} |u|^{2q}\right)^{\frac{1}{q}} =C r^{\frac{N}{q'}}\|u\|_p^{\frac{p}{q}}=Cr^{\gamma}$$
  with $\gamma=\frac{N}{q'}=N-\frac{2N}{p}$. Since $\gamma <N$ and $r\leq 1$, it follows that  $Cr^N\leq Cr^{\gamma}$ and finally $u$ is a  quasi-minimizer for the Lam\'e energy in the sense of Definition~\ref{defQuasi}  with $\gamma=N-\frac{2N}{p}$.
\end{proof}

Now  Theorem \ref{open} follows from gathering together Proposition \ref{quasiminimal} with the following one after noticing that the condition $\gamma> N-2$ with $\gamma=N-\frac{2N}{p}$ is equivalent to  $p>N$. 

\begin{proposition} \label{aga} Let $u \in H^1(\R^N)^N$ be a quasi-minimizer for the Lam\'e energy with exponent $\gamma \in (N-2,N]$. Then  $u\in C^{0,\alpha}(\R^N)^N$,  for all $\alpha<\frac{\gamma-(N-2)}{2}$.
\end{proposition}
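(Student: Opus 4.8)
The strategy is the classical Morrey-type decay estimate for quasi-minimizers of a quadratic energy: I would show that the Dirichlet-type energy $\int_{B_r}(|\nabla u|^2+(\mathrm{div}\,u)^2)\,dx$ decays like $r^{N-2+2\alpha}$ on small balls, and then invoke the Morrey–Campanato characterization of $C^{0,\alpha}$ to conclude. The key comparison is between $u$ and the Lamé-harmonic replacement $h$ on a ball $B_r$, i.e. the minimizer of $v\mapsto \int_{B_r}(|\nabla v|^2+(\mathrm{div}\,v)^2)\,dx$ with $v=u$ on $\partial B_r$. Since $h$ solves the homogeneous Lamé system in $B_r$, one has the scaled decay estimate
\begin{equation*}
\int_{B_\rho}\bigl(|\nabla h|^2+(\mathrm{div}\,h)^2\bigr)\,dx \;\leq\; C\Bigl(\frac{\rho}{r}\Bigr)^{N}\int_{B_r}\bigl(|\nabla h|^2+(\mathrm{div}\,h)^2\bigr)\,dx
\end{equation*}
for $0<\rho\le r$, coming from interior gradient bounds for solutions of the (elliptic, constant-coefficient) Lamé system. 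The Korn identity \eqref{kornkornkorn} guarantees ellipticity of this energy on $H^1_0$, so the replacement $h$ exists, is unique, and enjoys these estimates.

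Next I would combine the quasi-minimality \eqref{problemlam2} with the minimality of $h$. Writing $E(u,B_r):=\int_{B_r}(|\nabla u|^2+(\mathrm{div}\,u)^2)$, quasi-minimality applied to the competitor $v=h$ (extended by $u$ outside $B_r$) gives $E(u,B_r)\le E(h,B_r)+Cr^\gamma$. The standard orthogonality/energy-identity trick — since $h$ is the energy projection, $\int_{B_r}(\nabla u-\nabla h):\nabla h+\ldots=0$ using $u-h\in H^1_0(B_r)$ — yields $E(u-h,B_r)\le E(u,B_r)-E(h,B_r)\le Cr^\gamma$. Then for $\rho\le r$,
\begin{equation*}
E(u,B_\rho)\le 2E(h,B_\rho)+2E(u-h,B_\rho)\le C\Bigl(\frac{\rho}{r}\Bigr)^N E(u,B_r)+Cr^\gamma.
\end{equation*}
This is a decay inequality of the form $\phi(\rho)\le C((\rho/r)^N+\varepsilon)\phi(r)+Cr^\gamma$ with $\gamma<N$, to which the classical iteration lemma (e.g. Giaquinta's lemma) applies: it produces $E(u,B_\rho)\le C\rho^\gamma$ for all small $\rho$ (using $\gamma<N$). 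Hence $E(u,B_\rho)=O(\rho^\gamma)$, and since $\gamma>N-2$ we may write $\gamma=N-2+2\alpha_0$ with $\alpha_0=(\gamma-(N-2))/2>0$; for any $\alpha<\alpha_0$ this gives a Morrey-space bound $\int_{B_\rho}|\nabla u|^2\le C\rho^{N-2+2\alpha}$, so by Morrey's theorem $u\in C^{0,\alpha}_{loc}(\R^N)^N$, and since the constants are uniform in the center of the ball, $u\in C^{0,\alpha}(\R^N)^N$.

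The main obstacle I expect is the comparison step, because the natural energy here is built from $\nabla u$ and $\mathrm{div}\,u$ rather than from $e(u)$, and one must be a little careful that the Lamé-harmonic replacement $h$ (minimizing the $|\nabla v|^2+(\mathrm{div}\,v)^2$ energy) genuinely satisfies constant-coefficient interior estimates with the clean $(\rho/r)^N$ decay — this is true because the Euler–Lagrange system is $-\Delta h - \nabla(\mathrm{div}\,h)=0$, a strongly elliptic constant-coefficient system, so $C^\infty$ regularity and Caccioppoli/mean-value decay hold; but translating this through the Korn identity and keeping all constants independent of $r$ and of the ball's center requires care. A secondary subtlety is that $u$ need not be bounded (as Remark \ref{bound} stresses), so one must work purely with the local Morrey estimate and not appeal to any $L^\infty$ bound; fortunately the Morrey–Campanato route only needs the local energy decay, which is exactly what the argument delivers.
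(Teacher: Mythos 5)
Your proposal is correct and follows essentially the same route as the paper: comparison with the Lam\'e-harmonic replacement on each ball, interior decay for the constant-coefficient strongly elliptic system, the orthogonality (Pythagoras) identity combined with quasi-minimality to bound the energy gap by $Cr^\gamma$, and then an iteration of the resulting decay inequality. The only cosmetic differences are that you invoke the standard Giaquinta iteration lemma and Morrey's Dirichlet-growth theorem, whereas the paper writes out the dyadic iteration explicitly and concludes via Poincar\'e and the Campanato characterization (and note that for $\gamma=N$ one must lower the exponent slightly in the iteration, exactly as the paper does, which is harmless since the stated H\"older exponent is a strict inequality).
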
  

\begin{proof} The proof is inspired by standard arguments in free boundary theory, such as for instance \cite[Theorem 2.1.]{david_toro}. The novelty here is that we have to take care of the Lam\'e energy instead of the standard Dirichlet energy. However, since the Lam\'e system is elliptic 
in the sense of systems, (which means that it satisfies the strong Legendre-Hadamard ellipticity condition),  we can conclude by use of similar techniques from the regularity theory for elliptic equations.

In this proof we will keep denoting by $C>0$ a universal constant that could change from line to line. Let $B_r\subset \R^N$ be a given ball of radius $r\in (0,1)$ and let $v$ be the solution for the problem
$$
\min_{v\in u+H_0^1(B_r)}\Big \{   \int_{B_r} |\nabla (v)|^2 +({\rm div}(v))^2 \;dx \Big\}.
$$
In other words $v$ is the replacement of $u$ in $B_r$, by a function satisfying $v=u$ on $\partial B_r$ and solution for the homogeneous Lam\'e system
\begin{eqnarray}
-\Delta v - \nabla ({\rm div} v)=0 \text{ in } B_r. \label{lameB}
\end{eqnarray}
Since the Lam\'e system satisfies the strong Legendre-Hadamard ellipticity condition, then $v$ enjoys some nice decaying properties. Indeed by applying standard regularity theory for elliptic systems (see for instance \cite[Theorem 4.11]{giaquinta}) we know that 
\begin{eqnarray}
 \sup_{B_{r/2}}|\nabla v|^2 \leq C \frac{1}{|B_r|}\int_{B_r}|\nabla v|^2 \;dx . \label{estimate}
 \end{eqnarray}
Let now $Q_s(v)$ be the quadratic form defined by
$$Q_s(v):=  \int_{B_s} |\nabla v|^2 +({\rm div}(v))^2 \;dx.$$
Then for $s\leq r/2$, using \eqref{estimate} we get 
\begin{eqnarray}
Q_s(v)=  \int_{B_s} |\nabla v|^2 +({\rm div}(v))^2 \;dx &\leq &(1+N^2)\sup_{B_{r/2}}| \nabla v|^2 |B_s| \notag \\
& \leq & C \left(\frac{s}{r}\right)^{N}\int_{B_r}|\nabla v|^2 \;dx, \notag \\
&\leq & C \left(\frac{s}{r}\right)^{N} Q_r(v), \label{decay}
\end{eqnarray}
where $C$ depends only on dimension $N$.

Moreover, the weak formulation of \eqref{lameB} says that for all $\varphi \in H^1_0(B_r)^N$,
$$ \int_{B_r}\nabla v : \nabla \varphi+ ({\rm div} v)({\rm div \varphi})\;dx=0.$$
In other words, if $A_r(u,v)$ denotes the   bilinear form associated with $Q_r$ and defined by
$$A_r(u,v):=  \int_{B_r} \nabla u:\nabla v +{\rm div}(u){\rm div}(v) \;dx,$$
we have $A_r(v,\varphi)=0$ for all $\varphi \in H^1_0(B_r)^N$. This applies in particular to $\varphi=u-v\in H^1_0(B_r)^N$ and we deduce from Pythagoras equality that 
$Q_r(u-v)+Q_r(v)=Q_r(u)$ or differently,
\begin{eqnarray}
Q_r(u-v)=Q_r(u)-Q_r(v). \label{pythagore}
\end{eqnarray}
 We will use this property later. 
 
 Notice also that $Q_s$ is a nonnegative quadratic form for any $s>0$ and using that $Q_s(b-a)\geq 0$ we obtain, for arbitrary $a,b$,
  $$2|A_s(a,b)|\leq Q_s(a)+Q_s(b),$$
  so that, for all $s<r/2$,
$$Q_{s}(u)=Q_{s}(u-v+v)\leq 2Q_{s}(u-v)+2Q_{s}(v).$$
Then using \eqref{decay} we arrive to 
\begin{eqnarray}
Q_{s}(u) &\leq& 2Q_{s}(u-v)+2Q_{s}(v)\leq C \left(\frac{s}{r}\right)^{N} Q_r(v) +2 Q_r(u-v) \notag\\
&\leq &   C \left(\frac{s}{r}\right)^{N} Q_r(u) +2 Q_r(u-v), \notag 
\end{eqnarray}
where for the last line we have used that $v$ is a minimizer of $Q_r$ and $u$ is a competitor.  Now we recall that $Q_r(u-v)=Q_r(u)-Q_r(v)$ (by \eqref{pythagore}) and we use that $u$ is a quasi-minimizer, so that 
$$Q_r(u-v)=Q_r(u)-Q_r(v)\leq  Cr^\gamma.$$
All in all, we have proved that for all $s\leq r/2$ we have 
\begin{eqnarray}
Q_{s}(u) \leq  C \left(\frac{s}{r}\right)^{N} Q_r(u)+Cr^\gamma. \label{decCay}
\end{eqnarray}
Of course we can assume $C\geq 2$. The decaying in \eqref{decCay} looks promising but we would prefer $s^\gamma$ on the last term instead of $r^\gamma$. We can obtain this up to decrease a bit the power $\gamma$ and use a technical dyadic argument.  This is standard (see for e.g. \cite[Lemma 5.6.]{filippo}) but let us write the full details for the reader's convenience.

Indeed, to lighten the notation we denote by $f(s)$ the non-increasing function $f:s\mapsto Q_s(u)$.
Let $a\in (0,1/2)$ be chosen later and let $r_k:=a^kr_0$. Let us prove by induction that for all $k \in \N$ it holds
\begin{eqnarray}
f(a^k r_0)\leq C^k a^{Nk}f(r_0)+ Ca^{(k-1)\gamma }r_0^\gamma \frac{C^k-1}{C-1}. \label{somme}
\end{eqnarray}
For  $k=0$ the inequality is obvious. Now let us assume that it holds true for some $k$. Then from the decaying property \eqref{decCay} we infer that (using in particular that $\gamma\leq N$ in \eqref{etaa}),
\begin{eqnarray}
f(a^{k+1} r_0)&\leq& Ca^N f(a^k r_0) + C (a^k r_0)^\gamma \notag \\
&\leq & Ca^N \Big( C^k a^{Nk}f(r_0)+  Ca^{(k-1)\gamma}r_0^\gamma \frac{C^k-1}{C-1}  \Big)+ C (a^k r_0)^\gamma \notag \\   
&\leq & C^{k+1} a^{N(k+1)} f(r_0) + Ca^{\gamma k}r_0^\gamma \frac{C^{k+1}-C}{C-1}  + C (a^k r_0)^\gamma  \label{etaa} \\
&= &  C^{k+1} a^{N(k+1)} f(r_0) +  Ca^{\gamma k}r_0^\gamma  \left(\frac{C^{k+1}-1}{C-1}\right),\notag
\end{eqnarray}
which proves \eqref{somme}. To simplify a bit we can write  it differently, taking into account that $C\geq 2$, 
\begin{eqnarray}
f(r_k)\leq C^k \left(\frac{r_k}{r_0}\right)^{N}f(r_0)+ C^{k+1}a^{-1}r_k^\gamma. \label{decayingbis}
\end{eqnarray}
The nice thing with \eqref{decayingbis} is that we have now $r_k^\gamma$ on the last term (compare with \eqref{decCay}), but the price to pay is the $C^k$ in factor. We will beat this factor by choosing well the constant $a$, and decreasing a bit the powers $N$ and $\gamma$.

Indeed, let $\alpha \in (0,1)$ be given and let us fix
$$a:=\frac{1}{C^{\frac{1}{\alpha}}},$$
so that 
$$r_k^\alpha C^k = a^{\alpha k}r_0^\alpha C^k=r_0^\alpha.$$
Then from \eqref{decayingbis} we deduce that 
\begin{eqnarray}
f(r_k)&\leq &C^k \left(\frac{r_k}{r_0}\right)^{N-\alpha}\left(\frac{r_k}{r_0}\right)^{\alpha} f(r_0)+ C^{k+1}a^{-1}r_k^{\gamma-\alpha} r_k^{\alpha} \notag \\
&\leq &  \left(\frac{r_k}{r_0}\right)^{N-\alpha} f(r_0) + C' r_k^{\gamma-\alpha},
\end{eqnarray}
where 
$$C'=Ca^{-1}r_0^\alpha.$$
Now let $s\in (0,1/2)$ be given. There exists $k$ such that $r_{k+1} \leq s \leq r_k$. In particular, $r_k\leq \frac{1}{a}s$.

Moreover, $s\mapsto f(s)$ is non decreasing so 
\begin{eqnarray}
f(s)\leq f(r_k)&\leq&  \left(\frac{r_k}{r_0}\right)^{N-\alpha} f(r_0) + C' r_k^{\gamma-\alpha} \notag \\
&\leq& a^{-(N-\alpha)} \left(\frac{s}{r_0}\right)^{N-\alpha} f(r_0) + a^{-(\gamma-\alpha)}C'  s^{\gamma-\alpha}. \notag
\end{eqnarray}
In conclusion we have proved that there exists a constant $C>0$ (depending on $N$, $\alpha$, $\gamma$, $r_0$) such that  for all $s\leq 1/2$ we have 
$$f(s)\leq C s^{N-\alpha} f(r_0)+ Cs^{\gamma-\alpha}\leq C s^{\gamma-\alpha} f(r_0)+ Cs^{\gamma-\alpha},$$
where for the last inequality we have used $N\geq \gamma$. Returning back to $u$, and estimating $f(r_0)$ by $\int_{\R^N}|e(u)|^2 \;dx$, we conclude, using also the Poincar\'e inequality, that for all $r\leq 1/2$,
$$\int_{B_r}|u-m_u|^2 \; dx\leq Cr^2\int_{B_r} |\nabla u|^2 \;dx \leq C r^{2+\gamma -\alpha}=C r^{N+(2+\gamma-N -\alpha)},$$
where $m_u$ denotes the (vectorial) average of $u$. Remember that here $\alpha$ is arbitrary close to $0$. Then by standard results about Campanato spaces (see for e.g. \cite[Theorem 5.4]{filippo}), provided that 
$$ 2+\gamma-N -\alpha>0,$$
then
$u \in \mathscr{C}^{0,\beta}(\R^N)$ with $\beta=  \frac{2+\gamma-N -\alpha}{2}$. Since $\alpha$ is arbitrary, this means that $u$ belongs to    $\mathscr{C}^{0,\beta}(\R^N)$ for all $\beta<\frac{\gamma-(N-2)}{2}$, as soon as $\gamma>N-2$, and the Proposition follows. 
%%%%%%%%%%%%%%%%%%%%%%%%%%%%%%}
\end{proof}

%%%%%%%%%%%%%%%%%%%%%%%%%%%%%%

\section{Characterization of minimizers}\label{secoptim}

%%%%%%%%%%%%%%%%%%%%%%%%%%%%%%%%%%%%%%%%%%%%%%%%%%%%%%%%%
 
\subsection{Optimality conditions: first and second orders}
We give first general formulae for the first and second order shape derivative of the Lam\'e eigenvalue. We will then apply it to
get optimality conditions (assuming that the minimizer is smooth enough to justify our computations).
As kindly mentioned by D. Buoso, these computations (and the criticality of the ball) already appeared in the review paper
\cite{Buoso-Lamberti15}. Moreover, they use weaker regularity assumptions on the domain $\Omega$ in this paper.
For sake of completeness, we give the main results here that will be useful in Section \ref{secdisk}.
 Let $\Omega$ be a bounded domain with $\mathscr C^3$ boundary. This regularity assumption allows us to ensure that all quantities we will handle belong to $L^2(\partial\Omega)$.
Let ${V}\in W^{4,\infty}(\R^N,\R^N)$ and introduce, for any $t\in (-1,1)$ small enough, 
\[ 
\Omega_{t{V}}:=(\mathrm{Id}+t{V})\Omega.
\] 
Recall that, for any $t$ small enough, $(\mathrm{Id}+t\Phi)$ is a smooth diffeomorphism. 

 \paragraph{First order optimality conditions.}
 
For a given shape functional $F$ and a given shape $\Omega$, we say that $F$ is differentiable at $\Omega$ if, for any ${V}\in  W^{4,\infty}(\R^N,\R^N)$ compactly supported, the limit
\[ \langle dF(\Omega),{V}\rangle:=\lim_{t\to 0}\frac{F(\Omega_{t{V}})-F(\Omega)}t\] exists and if it is a linear form in ${V}$. In this case, this limit is called the first-order shape derivative of $F$ at $\Omega$ in the direction ${V}$.
 
 We consider the case of the general eigenvalue
 $$
 \Lambda(\Omega)=\inf_{u\in (H^1_0(\Omega))^N}\frac{\mu \int_\Omega |\nabla u|^2\, dx+(\lambda+\mu)\int_\Omega (\operatorname{div}(u))^2\, dx}{\int_\Omega |u|^2\, dx}.
 $$
 Let us assume moreover that $\Omega$ is such that $\Lambda(\Omega)$ is simple.
The associated PDE solved by the minimizer $u$ is
 \begin{equation}\label{pdeLame1}
 \left\lbrace
 \begin{array}{cc}
 -\mu \Delta u -(\lambda+\mu) \nabla ({\rm div}(u)) = \Lambda u & \mbox{ in } \Omega\\
 u=0  & \mbox{ on } \partial\Omega .\\
 \end{array}
 \right.
 \end{equation}
It is standard that $W^{4,\infty}(\R^N,\R^N)\ni {V}\mapsto {u}_{(\operatorname{Id}+{V})}\in [H^1(\Omega)]^N$ is differentiable, 
see for example  \cite{Buoso-Lamberti15}, \cite{zbMATH06838450}. Its Eulerian derivative $\dot{{u}}_{{V}}$, solves 
\begin{equation} \label{Pb:derLame}
\begin{cases}
-\mu \Delta \dot{{u}}_{{V}}-(\lambda+\mu)\nabla \operatorname{div} \dot{{u}}_{{V}} 	=\dot \Lambda {u}_{{V}}+\Lambda (\Omega)\dot{{u}}_{{V}} & \textrm{in }\Omega \\
\dot{{u}}_{{V}}=-\nabla {{u}_\Omega} n  ({V}\cdot n) & \textrm{on }\partial \Omega\,, 
\\ 
\int_{\Omega}  {u}_\Omega\cdot \dot{{u}}_{{V}}=0.
\end{cases}
\end{equation}
Let us multiply the main equation by ${u}_\Omega$ and then integrate by parts. We get
$$
\mu \int_\Omega \nabla u_\Omega :\nabla \dot{u}_V+(\lambda+\mu)\int_\Omega \operatorname{div}\dot{u}_V \operatorname{div}{u}_\Omega=\Lambda(\Omega)\int_\Omega u_\Omega\cdot \dot{u}_V+\dot{\Lambda}\int_\Omega |{u}_\Omega|^2.
$$
Similarly, let us multiply the main equation \eqref{pdeLame1} solved by $u_\Omega$ by $\dot{u}_V$ and then integrate by parts. Using the boundary conditions, we get
\begin{eqnarray*}
\mu \int_\Omega \nabla u_\Omega :\nabla \dot{u}_V+(\lambda+\mu)\int_\Omega \operatorname{div}\dot{u}_V \operatorname{div}{u}_V&=& \Lambda(\Omega)\int_\Omega u_\Omega\cdot \dot{u}_V-\mu \int_{\partial \Omega}|(\nabla u_\Omega )n|^2(V\cdot n)\\
&& -(\lambda+\mu)\int_\Omega (\operatorname{div}u_\Omega)^2 V\cdot n,
\end{eqnarray*}
by using that $\nabla u_\Omega n \cdot n = \operatorname{div}u_\Omega$ on $\partial \Omega$.
Finally, combining the two identities above yields
 $$
 \dot{\Lambda}=-\mu \int_{\partial \Omega}|(\nabla u_\Omega) n|^2(V\cdot n) -(\lambda+\mu)\int_\Omega (\operatorname{div}u_\Omega)^2 V\cdot n.
 $$
 We have then obtained the following result.
   \begin{proposition}\label{Pr:DifferentiabilityFormulaeBis}
Let $\Omega$ denote a $\mathscr C^3$ domain such that $\Lambda(\Omega)$ is simple. Let ${u}_\Omega$ be its associated 
(normalized) first eigenfunction. For any ${V}\in W^{4,\infty}(\R^N,\R^N)$, the mapping $W^{4,\infty}(\R^N,\R^N)\ni {V}\mapsto \Lambda(\Omega_{{V}})$ is differentiable. Denoting by $\dot \Lambda$ its differential, the first order derivative of $\Lambda$ is 
\begin{equation}\label{Eq:D1Lambda}
 \dot{\Lambda}=\langle d\Lambda(\Omega),{V}\rangle=-\mu \int_{\partial \Omega}|(\nabla u_\Omega) n|^2(V\cdot n) -(\lambda+\mu)\int_{\partial \Omega} (\operatorname{div}u_\Omega)^2 V\cdot n.
 \end{equation} 
\end{proposition}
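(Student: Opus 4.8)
The plan is to compute the first shape derivative of $\Lambda$ by the classical Hadamard method, combining the standard differentiability result for the eigenfunction (here quoted from \cite{Buoso-Lamberti15, zbMATH06838450}) with the adjoint (self-adjointness) trick that removes the dependence on the material derivative $\dot u_V$. Since $\Lambda(\Omega)$ is assumed simple and $\Omega$ is $\mathscr C^3$, the eigenfunction $u_\Omega$ is smooth enough that all the boundary integrals below make sense in $L^2(\partial\Omega)$.

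\medskip

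First I would write down the system \eqref{Pb:derLame} satisfied by the Eulerian (shape) derivative $\dot u_V$: it solves the same Lamé operator with right-hand side $\dot\Lambda\, u_\Omega + \Lambda(\Omega)\dot u_V$, with the nonhomogeneous Dirichlet condition $\dot u_V = -(\nabla u_\Omega\, n)(V\cdot n)$ on $\partial\Omega$ coming from differentiating the identity $u_{\Omega_{tV}}\circ(\mathrm{Id}+tV)=0$ on $\partial\Omega$, together with the normalization constraint $\int_\Omega u_\Omega\cdot\dot u_V=0$ obtained by differentiating $\int_\Omega |u_\Omega|^2 = 1$. Second, I would test the $\dot u_V$-equation with $u_\Omega$ and integrate by parts; since $u_\Omega$ vanishes on $\partial\Omega$ no boundary terms appear, and using the normalization this gives
\[
\mu\int_\Omega \nabla u_\Omega:\nabla\dot u_V + (\lambda+\mu)\int_\Omega \operatorname{div}\dot u_V\,\operatorname{div} u_\Omega = \dot\Lambda\int_\Omega |u_\Omega|^2 = \dot\Lambda.
\]
Third, I would test the original eigenvalue equation \eqref{pdeLame1} with $\dot u_V$ and integrate by parts; this time $\dot u_V$ does \emph{not} vanish on $\partial\Omega$, so two boundary contributions survive, namely $-\mu\int_{\partial\Omega}(\nabla u_\Omega\, n)\cdot\dot u_V$ and $-(\lambda+\mu)\int_{\partial\Omega}(\operatorname{div} u_\Omega)(\dot u_V\cdot n)$. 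Substituting the boundary condition $\dot u_V = -(\nabla u_\Omega\, n)(V\cdot n)$ turns the first term into $\mu\int_{\partial\Omega}|(\nabla u_\Omega)n|^2(V\cdot n)$, and for the second term one uses the identity $\nabla u_\Omega\, n\cdot n = \operatorname{div} u_\Omega$ valid on $\partial\Omega$ (a consequence of $u_\Omega=0$ there, so the tangential part of $\nabla u_\Omega$ vanishes), giving $(\lambda+\mu)\int_{\partial\Omega}(\operatorname{div} u_\Omega)^2(V\cdot n)$.

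\medskip

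Finally, the left-hand sides of the two tested identities coincide (the bilinear form is symmetric), so subtracting yields
\[
\dot\Lambda = -\mu\int_{\partial\Omega}|(\nabla u_\Omega)n|^2(V\cdot n) - (\lambda+\mu)\int_{\partial\Omega}(\operatorname{div} u_\Omega)^2\,(V\cdot n),
\]
which is \eqref{Eq:D1Lambda}, and the expression is manifestly linear and continuous in $V$, so $\Lambda$ is differentiable at $\Omega$. The main point requiring care — rather than a deep obstacle — is the justification of the boundary identity $\nabla u_\Omega\, n\cdot n = \operatorname{div} u_\Omega$ on $\partial\Omega$ and the validity of the integrations by parts, both of which rest on the $\mathscr C^3$ regularity of $\partial\Omega$ and the consequent regularity of $u_\Omega$ up to the boundary; the differentiability of $V\mapsto u_{(\mathrm{Id}+V)}$ itself is the genuinely technical input, and it is simply imported from \cite{Buoso-Lamberti15}.
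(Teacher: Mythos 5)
Your proposal is correct and follows essentially the same route as the paper: import the differentiability of the eigenpair from \cite{Buoso-Lamberti15, zbMATH06838450}, write the system \eqref{Pb:derLame} for $\dot u_V$, test it against $u_\Omega$ and test \eqref{pdeLame1} against $\dot u_V$, then use the symmetry of the bilinear form, the boundary condition $\dot u_V=-(\nabla u_\Omega\,n)(V\cdot n)$, and the identity $\nabla u_\Omega\, n\cdot n=\operatorname{div}u_\Omega$ on $\partial\Omega$ to obtain \eqref{Eq:D1Lambda}. The only cosmetic difference is that you invoke the normalization $\int_\Omega u_\Omega\cdot\dot u_V=0$ immediately in the first tested identity, whereas the paper carries that term along and cancels it when combining the two identities.
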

\begin{remark}[Shape gradient]
Observe that $\nabla u_\Omega \nu\cdot n = \operatorname{div}u_\Omega$ on $\partial \Omega$ and denote by $[\nabla u_{i,\Omega}]_\tau:= \nabla u_{i,\Omega}-\frac{\partial u_{i,\Omega}}{\partial nu} n$ the tangential part of the gradient $\nabla u_{i,\Omega}$. According to the result above, the shape gradient $\nabla \Lambda(\Omega)$ reads
\begin{equation}\label{eq1106}
\nabla \Lambda(\Omega)=-\mu |(\nabla u_\Omega) n|^2 -(\lambda+\mu) (\operatorname{div}u_\Omega)^2 
\end{equation}
and can be decomposed as:
$$
\nabla \Lambda(\Omega)=-\mu \sum_i |[\nabla u_{i,\Omega}]_\tau |^2 -(\lambda+2\mu) (\operatorname{div}u_\Omega)^2 .
$$
\end{remark}
\begin{corollary}
 Let $\Omega^*$ by a solution with $\mathscr{C}^3$ boundary of the extremal eigenvalue problem
 $$
 \min_{|\Omega|=V_0}\Lambda(\Omega)
 $$
such that $\Lambda(\Omega^*)$ is simple.
 Then, denoting by $u_{\Omega^*}$ any associated eigenfunction, 
 $$\mu |(\nabla u_\Omega^*) n|^2 -(\lambda+\mu) (\operatorname{div}u_\Omega^*)^2 $$
 is constant on $\partial\Omega^*$.
 \end{corollary}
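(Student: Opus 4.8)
The plan is to run the classical Lagrange multiplier argument attached to the volume constraint, using the first-order shape derivative from Proposition~\ref{Pr:DifferentiabilityFormulaeBis} together with the elementary shape derivative of the volume, namely $\frac{d}{dt}|(\mathrm{Id}+t{V})\Omega^*|\big|_{t=0}=\int_{\partial\Omega^*}{V}\cdot n$. The only care needed is to turn an arbitrary perturbation into a volume-preserving one without building an exactly divergence-free field by hand: given ${V}\in W^{4,\infty}(\R^N,\R^N)$ with $\int_{\partial\Omega^*}{V}\cdot n=0$, I would set
\[
\Omega_t:=\rho(t)(\mathrm{Id}+t{V})\Omega^*,\qquad \rho(t):=\left(\frac{V_0}{|(\mathrm{Id}+t{V})\Omega^*|}\right)^{1/N}.
\]
Since $|(\mathrm{Id}+t{V})\Omega^*|=V_0+t\int_{\partial\Omega^*}{V}\cdot n+O(t^2)=V_0+O(t^2)$, one gets $\rho(0)=1$ and $\rho'(0)=0$. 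By the scaling identity $\Lambda(\rho\,\Omega)=\rho^{-2}\Lambda(\Omega)$ and the chain rule, $\frac{d}{dt}\Lambda(\Omega_t)\big|_{t=0}=-2\rho'(0)\Lambda(\Omega^*)+\langle d\Lambda(\Omega^*),{V}\rangle=\langle d\Lambda(\Omega^*),{V}\rangle$, where differentiability of $t\mapsto\Lambda(\Omega_t)$ uses that $\Lambda(\Omega^*)$ is simple. But $|\Omega_t|=V_0$ for all small $t$ and $\Omega^*$ is a minimizer, so $t=0$ is a minimum of $t\mapsto\Lambda(\Omega_t)$ and this derivative vanishes.

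Plugging formula \eqref{Eq:D1Lambda} then gives
\[
\int_{\partial\Omega^*}\Big(\mu\,|(\nabla u_{\Omega^*})n|^2+(\lambda+\mu)\,(\operatorname{div}u_{\Omega^*})^2\Big)\,({V}\cdot n)=0
\]
for every admissible ${V}$ whose normal trace has zero mean on $\partial\Omega^*$. Because $\partial\Omega^*$ is $\mathscr{C}^3$, the normal traces $\{{V}\cdot n:{V}\in W^{4,\infty}(\R^N,\R^N)\text{ compactly supported}\}$ contain a dense subspace of $L^2(\partial\Omega^*)$ (extend a smooth boundary function to a neighborhood and multiply by a cut-off times the extended normal field); hence, restricting to mean-zero functions, the $L^2(\partial\Omega^*)$ function $g:=\mu\,|(\nabla u_{\Omega^*})n|^2+(\lambda+\mu)\,(\operatorname{div}u_{\Omega^*})^2$ is $L^2$-orthogonal to the orthogonal complement of the constants, and is therefore constant on $\partial\Omega^*$. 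This is exactly the stated conclusion (the shape gradient being the opposite of $g$, cf. \eqref{eq1106}).

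The main difficulty is not conceptual but a matter of regularity bookkeeping: one must know that, for a $\mathscr{C}^3$ domain, the first eigenfunction $u_{\Omega^*}$ satisfies $|(\nabla u_{\Omega^*})n|^2,\ (\operatorname{div}u_{\Omega^*})^2\in L^2(\partial\Omega^*)$, so that $g$ is a genuine $L^2(\partial\Omega^*)$ function against which the orthogonality argument can be run; this rests on elliptic regularity up to the boundary for the Lamé system, which is precisely the reason the $\mathscr{C}^3$ hypothesis was imposed (as noted before Proposition~\ref{Pr:DifferentiabilityFormulaeBis}). One should also be slightly careful that the admissible normal traces really exhaust a dense subspace of $L^2(\partial\Omega^*)$; with $\partial\Omega^*\in\mathscr{C}^3$ one can realize at least all $\mathscr{C}^2$ functions on $\partial\Omega^*$, which suffices. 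Everything else is routine, and the rescaling trick above is what lets us bypass the explicit construction of a volume-preserving vector field.
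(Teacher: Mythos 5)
Your proof is correct and follows essentially the same route as the paper: the paper simply invokes the Lagrange multiplier rule for the volume constraint together with Proposition~\ref{Pr:DifferentiabilityFormulaeBis}, while you implement that rule explicitly through the rescaling trick $\rho(t)(\mathrm{Id}+tV)\Omega^*$ (using $\Lambda(\rho\Omega)=\rho^{-2}\Lambda(\Omega)$ and $\rho'(0)=0$) and an $L^2(\partial\Omega^*)$-density argument for the normal traces, which is exactly the detail the paper leaves implicit. One useful observation: the quantity your argument shows to be constant is $\mu|(\nabla u_{\Omega^*})n|^2+(\lambda+\mu)(\operatorname{div}u_{\Omega^*})^2$, consistent with \eqref{Eq:D1Lambda}, \eqref{eq1106} and with the paper's remark that $|e(u_{\Omega^*})|$ is constant when $\lambda=0$, so the minus sign in the corollary's displayed formula is evidently a typo and your proposal establishes the corrected statement.
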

\begin{proof}
This is a consequence of Proposition~\ref{Pr:DifferentiabilityFormulaeBis}. Indeed, since we work with a volume constraint, there
exists a Lagrange multiplier such that the shape gradient of the eigenvalue is proportional to the derivative of the volume, namely
$\int_{\partial\Omega^*} V\cdot n$ whence the result.
As a particular case, if we take $\mu>0$ and $\lambda=0$ we obtain that $|e(u_{\Omega^*}|$ in that case  is constant on the boundary.
\end{proof}

\paragraph{Second order optimality conditions.}
 According for instance to  \cite{zbMATH03656460}, for any $\mathscr C^3$ domain $\Omega$ such that $\Lambda(\Omega)$ is simple, the mapping $\Omega\mapsto \Lambda(\Omega)$ is twice shape differentiable at $\Omega$ in the following sense: for any compactly supported vector field $V\in W^{4,\infty}(\R^N,\R^N)$, the map $t\mapsto \Lambda\left((\mathrm{Id}+t\Phi)\Omega\right)$ is twice differentiable at $t=0$. We will use the notations 
\[   
\langle d\Lambda(\Omega),V\rangle :=f'(0),\qquad \langle d^2\Lambda(\Omega)V,V\rangle :=f''(0).
\]
Similarly, the mapping $\Omega\mapsto u_\Omega$ is twice shape-differentiable  at $\Omega$, where $u_\Omega$ is the first  normalized eigenfunction of System \eqref{pdeLame1} on $\Omega$, in the sense that the mapping $g:t\mapsto u_{(\mathrm{Id}+tV)\Omega}$ is twice differentiable at $t=0$. We let $\dot{u}_V$ be its first order derivative at $t=0$ (often called Eulerian derivative in the standard shape optimization literature).
%
%
%For a detailed introduction to the Hadamard shape calculus, we refer to \cite[Chapter 5]{zbMATH06838450} and to \cite{MR2720607}. To proceed with the proof of Theorem \ref{Th:Optimality2d}, we need tractable expressions for the first and second-order shape derivatives of the eigenvalue at a domain $\O$.

\begin{proposition}\label{Pr:DifferentiabilityFormulae}
For any $\mathscr C^3$ domain $\Omega$ such that $\Lambda(\Omega)$ is simple, let $u_\Omega$ be its associated first eigenfunction. For any $V\in W^{4,\infty}(\R^N,\R^N)$ compactly supported, the shape derivative $\dot{u}_V$ solves the PDE 
\begin{equation} \label{Pb:der}
\begin{cases}
-\mu \Delta \dot{u}_V-(\lambda+\mu)\nabla \operatorname{div}\dot{u}_V=\Lambda(\Omega)\dot{u}_V+ \langle d\Lambda(\Omega),V\rangle  {u_\Omega} & \textrm{in }\Omega\\
\dot{u}_V=-\nabla {u_\Omega} n (V\cdot n) & \textrm{on }\partial \Omega\,, 
\\ \int_{\Omega} u_\Omega\cdot \dot{u}_V=0.
\end{cases}
\end{equation}
 If, in addition, the vector field $V$ is normal to $\partial \Omega$, meaning that $V=(V\cdot n) n$ on $\partial\Omega$, the second-order shape derivative of $\Lambda$ at $\Omega$ is given by 
 \begin{eqnarray}\label{Eq:D2Lambdabis}
   \langle d^2\Lambda(\Omega)V,V\rangle &=& -\mu\int_{\partial \Omega}\left(H \left| \frac{\partial  u_\Omega}{\partial n}\right|^2+2\frac{\partial^2  u_\Omega }{\partial n^2}\cdot \frac{\partial  u_\Omega }{\partial n}\right)(V\cdot n)^2 \nonumber  \\ 
&& -(\lambda+\mu) \int_{\partial \Omega}\left(H (\operatorname{div} u_\Omega)^2+\frac{\partial (\operatorname{div} u_\Omega)^2}{\partial n}\right)(V\cdot n)^2\nonumber \\
&& -2\Lambda \int_\Omega |\dot{u}_V|^2+2\mu \int_\Omega |\nabla \dot{u}_V|^2+2(\lambda+\mu)\int_\Omega (\operatorname{div}\dot{u}_V)^2
\end{eqnarray} 
where $H$ is the mean curvature of $\partial \Omega$.

Furthermore, when $N=2$, one has
$$
\frac{\partial (\operatorname{div} u_\Omega)^2}{\partial n}=-\frac{2\mu}{\lambda+\mu}\operatorname{div}u_\Omega \left(H\frac{\partial u_\Omega}{\partial n}\cdot n+\frac{\partial^2u_\Omega}{\partial n^2}\cdot n \right)\qquad \text{on }\partial\Omega .
$$ 
\end{proposition}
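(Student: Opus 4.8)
The plan is to prove in turn the three claims of the proposition: the boundary value problem \eqref{Pb:der} satisfied by the shape derivative $\dot{u}_V$, the second-order formula \eqref{Eq:D2Lambdabis}, and the boundary identity for $\frac{\partial(\operatorname{div} u_\Omega)^2}{\partial n}$ when $N=2$. For the first claim I would use the differentiability of $V\mapsto u_{(\mathrm{Id}+V)}$ into $[H^1(\Omega)]^N$ recalled above (see \cite{Buoso-Lamberti15,zbMATH06838450}) and differentiate the eigenvalue equation \eqref{pdeLame1} along the flow $\mathrm{Id}+tV$. Since the Lamé operator has constant coefficients, the Eulerian derivative $\dot{u}_V$ satisfies $-\mu\Delta\dot{u}_V-(\lambda+\mu)\nabla\operatorname{div}\dot{u}_V=\Lambda(\Omega)\dot{u}_V+\langle d\Lambda(\Omega),V\rangle u_\Omega$ in $\Omega$. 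For the boundary condition I would differentiate at $t=0$ the identity $u_{\Omega_{tV}}\circ(\mathrm{Id}+tV)=0$, which holds on $\partial\Omega$ for all small $t$: it says that the \emph{material} derivative of $u$ vanishes on $\partial\Omega$, hence $\dot{u}_V=-(\nabla u_\Omega)V$ there, and since $u_\Omega=0$ on $\partial\Omega$ forces its tangential gradient to vanish, $(\nabla u_\Omega)V=(V\cdot n)\frac{\partial u_\Omega}{\partial n}$, which is the stated boundary datum. Finally, differentiating the normalization $\int_{\Omega_{tV}}|u|^2=1$ and noting that the associated Hadamard boundary term $\int_{\partial\Omega}|u_\Omega|^2(V\cdot n)$ vanishes (again because $u_\Omega=0$ on $\partial\Omega$) gives $\int_\Omega u_\Omega\cdot\dot{u}_V=0$.

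For the second-order formula I would differentiate the first-order expression \eqref{Eq:D1Lambda} of Proposition~\ref{Pr:DifferentiabilityFormulaeBis} once more along the flow, using the Hadamard boundary variation formula: for a normal field $V=(V\cdot n)n$ and a shape-dependent scalar $g=g_\Omega$, the $t$-derivative at $0$ of $\int_{\partial\Omega_{tV}}g_{\Omega_{tV}}(V\cdot n)\,d\sigma$ equals
\[
\int_{\partial\Omega}g'\,(V\cdot n)\,d\sigma+\int_{\partial\Omega}\Big(Hg+\frac{\partial g}{\partial n}\Big)(V\cdot n)^2\,d\sigma ,
\]
where $g'$ is the shape derivative of $g_\Omega$, the restriction to normal fields removing the tangential contributions. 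Applying this with $g=-\mu\bigl|\frac{\partial u_\Omega}{\partial n}\bigr|^2$ and with $g=-(\lambda+\mu)(\operatorname{div} u_\Omega)^2$ directly yields the curvature terms and the normal-derivative terms of \eqref{Eq:D2Lambdabis}. It then remains to treat the two integrals $\int_{\partial\Omega}g'(V\cdot n)$: here I would use that the shape derivative commutes with differential operators, so that $(\operatorname{div} u_\Omega)'=\operatorname{div}\dot{u}_V$ and $\bigl(\frac{\partial u_\Omega}{\partial n}\bigr)'$ is controlled by $\dot{u}_V$ on $\partial\Omega$, substitute the boundary datum $\dot{u}_V=-\frac{\partial u_\Omega}{\partial n}(V\cdot n)$, and then integrate by parts against the equations \eqref{pdeLame1} and \eqref{Pb:der}; using $\int_\Omega u_\Omega\cdot\dot{u}_V=0$ and $u_\Omega=0$ on $\partial\Omega$ (so that the second-order derivative of $u$ drops out via the normalization constraint) these boundary integrals collapse to $-2\Lambda\int_\Omega|\dot{u}_V|^2+2\mu\int_\Omega|\nabla\dot{u}_V|^2+2(\lambda+\mu)\int_\Omega(\operatorname{div}\dot{u}_V)^2$. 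This reorganization, and keeping track of every sign, is the step I expect to be the main obstacle; the twice shape-differentiability of $\Omega\mapsto\Lambda(\Omega)$ and $\Omega\mapsto u_\Omega$ itself I would quote from \cite{zbMATH03656460}.

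For the last identity, I would use only the Dirichlet condition and the equation (the dimension in fact plays no role, but the formula is needed only for the disk in Section~\ref{secdisk}). On $\partial\Omega$ the relation $u_\Omega=0$ forces all tangential derivatives of $u_\Omega$ to vanish, so $\nabla u_\Omega=\frac{\partial u_\Omega}{\partial n}\otimes n$ there; taking the trace gives $\operatorname{div} u_\Omega=\frac{\partial u_\Omega}{\partial n}\cdot n$, and componentwise the tangential (Laplace--Beltrami) Laplacian $\Delta_\tau u_\Omega$ vanishes on $\partial\Omega$. Evaluating the normal component of \eqref{pdeLame1} on $\partial\Omega$, where $u_\Omega=0$, gives $-\mu\,(\Delta u_\Omega)\cdot n=(\lambda+\mu)\frac{\partial(\operatorname{div} u_\Omega)}{\partial n}$. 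Decomposing the Laplacian near $\partial\Omega$ componentwise as $\Delta w=\frac{\partial^2 w}{\partial n^2}+H\frac{\partial w}{\partial n}+\Delta_\tau w$ and using $\Delta_\tau u_\Omega=0$ on $\partial\Omega$ gives $(\Delta u_\Omega)\cdot n=\frac{\partial^2 u_\Omega}{\partial n^2}\cdot n+H\frac{\partial u_\Omega}{\partial n}\cdot n$, hence $\frac{\partial(\operatorname{div} u_\Omega)}{\partial n}=-\frac{\mu}{\lambda+\mu}\bigl(H\frac{\partial u_\Omega}{\partial n}\cdot n+\frac{\partial^2 u_\Omega}{\partial n^2}\cdot n\bigr)$; multiplying by $2\operatorname{div} u_\Omega$ yields the stated expression for $\frac{\partial(\operatorname{div} u_\Omega)^2}{\partial n}$.
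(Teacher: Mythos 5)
Your proposal is correct and follows essentially the same route as the paper: differentiate the first-order formula \eqref{Eq:D1Lambda} via the Hadamard boundary-variation formula (the contributions of $n'$ and $(V\cdot n)'$ vanishing for normal fields because $u_\Omega=0$ on $\partial\Omega$), then test the system \eqref{Pb:der} against $\dot u_V$ and use $\int_\Omega u_\Omega\cdot\dot u_V=0$ to convert the remaining boundary terms into $-2\Lambda \int_\Omega |\dot{u}_V|^2+2\mu \int_\Omega |\nabla \dot{u}_V|^2+2(\lambda+\mu)\int_\Omega (\operatorname{div}\dot{u}_V)^2$, and finally read the last identity off the normal component of \eqref{pdeLame1} combined with the decomposition $\Delta=\partial_{nn}+H\partial_n+\Delta_\tau$ on $\partial\Omega$. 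The only differences are cosmetic: you derive \eqref{Pb:der} directly via the material derivative (the paper cites \cite[Chapter 5]{zbMATH06838450} for this), and you rightly note that the boundary identity for $\partial_n(\operatorname{div}u_\Omega)^2$ is not specific to $N=2$.
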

\begin{proof}
Let us denote by $u_\Omega=(u_1,\dots, u_N)^\top$ the solution of \eqref{pdeLame1}.
General formulae for the shape differentiation of Dirichlet boundary value problem yield that $\dot{u}_V$ solves \eqref{Pb:der}, we refer to \cite[Chapter 5]{zbMATH06838450} for the detailed computations. 
%Let us multiply the main equation of  \eqref{Pb:der} by $u_\Omega$ and to integrate by parts, we obtain 
%\[   \langle d\Lambda(\O),\Phi\rangle \int_\O \Vert \u_\O\Vert^2=-\int_{\partial \O}\Vert (\partial_n u_\Omega )n\Vert^2( V\cdot n).\] As
%\[ \int_\O \Vert \u_\O\Vert^2=1,\] the conclusion follows. 
%
%To obtain the expression for $   \langle d^2\Lambda(\O)\Phi,\Phi\rangle $ for normal vector fields, it suffices to 

Let us apply the Hadamard formula for integrals on variable boundaries \cite[Proposition 5.4.18]{zbMATH06838450} to \eqref{Eq:D1Lambda}. This yields
\begin{eqnarray*}
   \langle d^2\Lambda(\Omega)V,V\rangle &=&-2\mu \int_{\partial \Omega}  (\nabla \dot{u}_V)n\cdot (\nabla u_\Omega)n  (V\cdot n) -\mu\int_{\partial \Omega}\left(H | (\nabla u_\Omega)n|^2+\frac{\partial |(\nabla u_\Omega)n|^2}{\partial n}\right)(V\cdot n)^2
\\&& -2(\lambda+\mu)\int_{\Omega}\operatorname{div} u_\Omega \operatorname{div} \dot{u}_V (V\cdot n)\\
&& -(\lambda+\mu) \int_{\partial \Omega}\left(H (\operatorname{div} u_\Omega)^2+\frac{\partial (\operatorname{div} u_\Omega)^2}{\partial n}\right)(V\cdot n)^2+R,
\end{eqnarray*}
where $H$ denotes the mean curvature of $\partial\Omega$ and
\begin{eqnarray*}
R&=&-2\mu\int_{\partial \Omega}\langle (\partial_n u_\Omega)n',(\partial_nu_\Omega)n\rangle ( V\cdot n)-\mu\int_{\partial \Omega} \Vert (\partial_n u_\Omega )n\Vert^2( V\cdot n)'\\
&&-(\lambda+\mu)\int_{\partial\Omega}(\operatorname{div}u_\Omega)^2(V\cdot n)'
\end{eqnarray*}
where $n'$ is the Eulerian derivative of $n$.
The expression of \( d^2 \Lambda(\Omega) \) is independent of the specific extension chosen for \( n \), which allows us to consider a regular extension of \( n \), that  is unitary in a neighborhood of \( \partial\Omega \), without loss of generality. As a consequence, $n'=-\nabla_\Gamma(V\cdot n)$, the notation $\nabla_\Gamma$ standing for the tangential gradient.
First, note that \( ( V\cdot n)' = 0 \) since we are dealing with vector fields $V$ that are normal to \( \partial\Omega \). Furthermore, because \( u_\Omega = 0 \) on \( \partial\Omega \) and \( n' \) is orthogonal to \( n \), it follows that \( \nabla u_i \cdot n' = 0 \) for 
\( i \in \{1, 2, \ldots N\} \), which implies \( (\nabla u_\Omega)n' = 0 \). From this, we deduce that \( R = 0 \). 
%It is standard that the expression of $d^2\Lambda(\Omega)$ is independent of the chosen extension for $n$ allowing us to define $n'$. For this reason, we have chosen to deal with a regular extension, such that $n$ is unitary in a neighborhood of $\partial\Omega$.  
%Note first that $( V\cdot n)'$ since we deal with vector fields $\Phi$ that are normal b $\partial\Omega$. Furthermore, since $\u_\O=0$ on $\partial\Omega$ and $n'$ is orthogonal to $n$, it follows that $\nabla u_i\cdot n'=0$ for $i\in \{1,2,3\}$, yielding $ (\n \u_\Phi)n'=0$. We infer that $R=0$.

Let us multiply the main equation of \eqref{Eq:D2Lambdabis} by $\dot{u}_V$ and then integrate by parts. We obtain
\begin{eqnarray*}
\lefteqn{\mu \int_\Omega |\nabla \dot{u}_V|^2+\mu \int_{\partial\Omega}(\nabla \dot{u}_V)n \cdot (\nabla u_\Omega)n (V\cdot n)+(\lambda+\mu)\int_\Omega (\operatorname{div}\dot{u}_V)^2}\\
&&+(\lambda+\mu)\int_{\partial\Omega}\operatorname{div}\dot{u}_V (\nabla u_\Omega) n\cdot n (V\cdot n)=\dot{\Lambda}\int_{\Omega}\dot{u}_V\cdot u_\Omega+\Lambda \int_\Omega |\dot{u}_V|^2.
\end{eqnarray*}
Using that $\int_{\Omega}\dot{u}_V\cdot u_\Omega=0$ and that
$$
(\nabla u_\Omega) n\cdot n=\sum_{i,j}\frac{\partial u_i}{\partial x_j}n_in_j=\sum_{i,j}\frac{\partial u_i}{\partial n}n_in_j^2=\sum_{i}\frac{\partial u_i}{\partial n}n_i=\sum_{i}\frac{\partial u_i}{\partial x_i}=\operatorname{div}u_\Omega
$$
on $\partial\Omega$, the equality above simplifies into
\begin{eqnarray*}
\lefteqn{\mu \int_{\partial\Omega}(\nabla \dot{u}_V)n \cdot (\nabla u_\Omega)n (V\cdot n)+(\lambda+\mu)\int_{\partial\Omega}\operatorname{div}\dot{u}_V \operatorname{div}u_\Omega (V\cdot n)} \\
&&=\Lambda \int_\Omega |\dot{u}_V|^2-\mu \int_\Omega |\nabla \dot{u}_V|^2-(\lambda+\mu)\int_\Omega (\operatorname{div}\dot{u}_V)^2.
\end{eqnarray*}
As a result, the second order derivative of $\Lambda$ rewrites
\begin{eqnarray*}
   \langle d^2\Lambda(\Omega)V,V\rangle &=& -\mu\int_{\partial \Omega}\left(H | (\nabla u_\Omega)n|^2+\frac{\partial |(\nabla u_\Omega)n|^2}{\partial n}\right)(V\cdot n)^2
\\&& -2\Lambda \int_\Omega |\dot{u}_V|^2+2\mu \int_\Omega |\nabla \dot{u}_V|^2+2(\lambda+\mu)\int_\Omega (\operatorname{div}\dot{u}_V)^2\\
&& -(\lambda+\mu) \int_{\partial \Omega}\left(H (\operatorname{div} u_\Omega)^2+\frac{\partial (\operatorname{div} u_\Omega)^2}{\partial n}\right)(V\cdot n)^2.
\end{eqnarray*}
Let us simplify the term
%The end of the proof is devoted to simplifying the boundary terms 
$$
A:=\frac{\partial |(\nabla u_\Omega)n|^2}{\partial n} .
$$
%\paragraph{Simplification of $A$.} 
By expanding $|(\nabla u_\Omega)n|^2$, we get
$$
|(\nabla u_\Omega)n|^2=\sum_i \left(\frac{\partial u_i}{\partial n}\right)^2.
$$
Now since $u_i=0$ on $\partial \Omega$, one has 
$$
\frac{\partial}{\partial n}\left(\frac{\partial  u_i}{\partial n}\right)^2=2 \left(\frac{\partial  u_i}{\partial n}\right)
\left(\frac{\partial^2  u_i}{\partial n^2}\right)
$$
%Let us introduce a normal coordinate system in a neighborhood of $\partial\Omega$, and let us use introduce $\nabla_\tau$ (resp. $\Delta_\tau$), the tangential gradient (resp. Laplacian) 
%
%Let $i\in \{1,\dots,d\}$. 
%Recall that, for any smooth function $f$ in a neighborhood of a smooth surface $\Gamma$, one has 
%\begin{eqnarray*}
%\frac12 \frac{\partial |\nabla f|^2}{\partial n}&=&\nabla_\tau f\cdot \nabla_\tau \left(\frac{\partial f}{\partial n}\right)+\frac{\partial f}{\partial n}(\Delta f-\Delta_\tau f)-H\left(\frac{\partial f}{\partial n}\right)^2\\
%&& -\sum_{j,k}\left(\frac{\partial f}{\partial x_j}-n_j\frac{\partial f}{\partial n}\right)\left(\frac{\partial f}{\partial x_k}-n_k\frac{\partial f}{\partial n}\right)B_{kj},
%\end{eqnarray*}
%where $B_{kj}$ denotes the curvature matrix associated to $\Gamma$.
%
%Applying this formula to compute $\frac{\partial |\nabla u_i|^2}{\partial n}$, since $u_i=0$ on $\partial\Omega$, it follows that
%$$
%\frac{\partial |\nabla u_i|^2}{\partial n}=2\frac{\partial  u_i}{\partial n}\frac{\partial^2  u_i}{\partial n^2}\qquad \text{on }\partial\Omega ,
%$$
%%whence the expression of $A$.
and therefore $A=2\frac{\partial u_\Omega}{\partial n}\cdot \frac{\partial^2 u_\Omega}{\partial n^2}$. 

Now we look at the term 
$$B:=\frac{\partial (\operatorname{div} u_\Omega)^2}{\partial n}.$$
%\paragraph{Simplification of $B$.} 
To simplify this term, we will use the main equation in \eqref{pdeLame1}. 
%Let us assume 
%%for simplicity that $N=2$ and introduce a normal coordinate system $(\tau,n)$ in a neighborhood of $\partial\Omega$ with $n=(n_1,n_2)$ and $\tau=(-n_2,n_1)$. One has
%\begin{eqnarray*}
%\frac{\partial \operatorname{div}u_\Omega}{\partial x_1}&=&n_1\frac{\partial \operatorname{div}u_\Omega}{\partial n}-n_2\frac{\partial \operatorname{div}u_\Omega}{\partial \tau}\\
%\frac{\partial \operatorname{div}u_\Omega}{\partial x_2}&=&n_2\frac{\partial \operatorname{div}u_\Omega}{\partial n}+n_1\frac{\partial \operatorname{div}u_\Omega}{\partial \tau}
%\end{eqnarray*}
We have
$$
\frac{\partial \operatorname{div}u_\Omega}{\partial n}= \sum_i \frac{\partial \operatorname{div}u_\Omega}{\partial x_i} \,n_i.
$$
According to  \eqref{pdeLame1} and the decomposition of the Laplacian 
$$
\Delta u_i=\Delta_\tau u_i+H\frac{\partial u_i}{\partial n}+\frac{\partial^2 u_i}{\partial n^2}\qquad \text{on }\partial \Omega,
$$
we get
$$
(\lambda+\mu)\frac{\partial \operatorname{div}u_\Omega}{\partial n}=-\mu \left(H\frac{\partial u_\Omega}{\partial n}\cdot n+\frac{\partial^2u_\Omega}{\partial n^2}\cdot n\right)\qquad \text{on }\partial\Omega ,
$$
and thus
$$
\frac{\partial (\operatorname{div} u_\Omega)^2}{\partial n}=-\frac{2\mu}{\lambda+\mu}\operatorname{div}u_\Omega \left(H\frac{\partial u_\Omega}{\partial n}\cdot n+\frac{\partial^2u_\Omega}{\partial n^2}\cdot n\right)\qquad \text{on }\partial\Omega ,
$$
whence the last claim of the theorem.
\end{proof}

 \subsection{Multiplicity of minimal eigenvalues}
 \begin{lemma}
 Assume that $N=2$ and let $\Omega^*$ be a minimizing domain with $\mathscr{C}^3$ boundary for the problem
 $$
 \min_{|\Omega|=V_0}\Lambda(\Omega).
 $$
 Then, $\Lambda(\Omega^*)$ is at most of multiplicity 2.
 \end{lemma}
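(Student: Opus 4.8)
The plan is to argue by contradiction: assume the multiplicity $m:=\dim\ker(L_{\Omega^*}-\Lambda(\Omega^*)\,\mathrm{Id})$ is at least $3$ and contradict the first-order optimality of $\Omega^*$. Fix an $L^2(\Omega^*)$-orthonormal basis $u^1,\dots,u^m$ of the eigenspace. When $\Lambda(\Omega^*)$ is multiple, $t\mapsto\Lambda(\Omega^*_{tV})$ is no longer differentiable but still admits one-sided derivatives: by perturbation theory for clusters of eigenvalues (see \cite{zbMATH03656460}, and \cite{Buoso-Lamberti15} for the Lamé setting), its right (resp. left) derivative at $t=0$ equals the smallest (resp. largest) eigenvalue of the symmetric $m\times m$ matrix $M(V)=(M_{ij}(V))$ obtained by polarizing \eqref{Eq:D1Lambda}, namely
\[
M_{ij}(V)=-\mu\int_{\partial\Omega^*}(\partial_n u^i)\cdot(\partial_n u^j)\,(V\cdot n)-(\lambda+\mu)\int_{\partial\Omega^*}\operatorname{div}u^i\,\operatorname{div}u^j\,(V\cdot n).
\]
Working with the scale-invariant functional $|\Omega|^{2/N}\Lambda(\Omega)$ of \eqref{shop2}, if $V\in W^{4,\infty}(\R^N,\R^N)$ is compactly supported with $\int_{\partial\Omega^*}V\cdot n=0$, then $|\Omega^*_{tV}|^{2/N}=|\Omega^*|^{2/N}+O(t^2)$, so minimality of $\Omega^*$ forces $\Lambda(\Omega^*_{tV})\geq\Lambda(\Omega^*)-Ct^2$; letting $t\to 0^+$ gives $\lambda_{\min}(M(V))\geq 0$ and letting $t\to 0^-$ gives $\lambda_{\max}(M(V))\leq 0$. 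Hence $M(V)=0$ for all such $V$, and since $V\cdot n$ ranges over all smooth functions of zero mean on $\partial\Omega^*$, the function $g_{ij}:=\mu\,(\partial_n u^i)\cdot(\partial_n u^j)+(\lambda+\mu)\,\operatorname{div}u^i\,\operatorname{div}u^j$ is constant on $\partial\Omega^*$ for every pair $(i,j)$.

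Next I would use the boundary relations specific to $N=2$. Since $u^i=0$ on the curve $\partial\Omega^*$, the tangential part of $\nabla u^i_k$ vanishes, so $(\nabla u^i)n=\partial_n u^i$; writing $\partial_n u^i=\alpha^i\tau+\beta^i n$ in the Frenet frame $(\tau,n)$, one gets $\operatorname{div}u^i=\partial_n u^i\cdot n=\beta^i$ and $(\partial_n u^i)\cdot(\partial_n u^j)=\alpha^i\alpha^j+\beta^i\beta^j$ on $\partial\Omega^*$, whence $g_{ij}=\mu\,\alpha^i\alpha^j+(\lambda+2\mu)\,\beta^i\beta^j$. Therefore the constant matrix $G:=(g_{ij})$ satisfies, at every point $x\in\partial\Omega^*$,
\[
G=\mu\,\mathbf a(x)\mathbf a(x)^\top+(\lambda+2\mu)\,\mathbf b(x)\mathbf b(x)^\top,\qquad \mathbf a(x)=(\alpha^1(x),\dots,\alpha^m(x))^\top,\ \mathbf b(x)=(\beta^1(x),\dots,\beta^m(x))^\top,
\]
a sum of two rank-one matrices, so $\operatorname{rank}G\leq 2$.

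It remains to prove that $G$ is invertible, which then forces $m\leq 2$. Since $\mu>0$ and $\lambda+2\mu=\mu+(\lambda+\mu)>0$, $G$ is positive semidefinite and $v\in\ker G$ iff $v\cdot\mathbf a(x)=v\cdot\mathbf b(x)=0$; as $\ker G$ is independent of $x$, this holds for every $x\in\partial\Omega^*$, i.e. the eigenfunction $w:=\sum_i v_iu^i$ has $\partial_n w=0$ on $\partial\Omega^*$. Together with $w=0$ there, $w$ has vanishing Cauchy data on $\partial\Omega^*$; extending $w$ by zero across $\partial\Omega^*$ keeps a $W^{2,2}$ function still solving the Lamé eigenvalue equation distributionally (the interface terms vanish since $w$ and $\nabla w$ are continuous across $\partial\Omega^*$), so unique continuation for the constant-coefficient Lamé system yields $w\equiv 0$. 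For completeness, this unique continuation reduces to the scalar case: $\operatorname{div}w$ solves $-(\lambda+2\mu)\Delta(\operatorname{div}w)=\Lambda\operatorname{div}w$ and $\operatorname{curl}w$ solves $-\mu\Delta(\operatorname{curl}w)=\Lambda\operatorname{curl}w$, both vanishing on an open set hence identically by Aronszajn's theorem, after which each component of $w$ solves $-\mu\Delta w_k=\Lambda w_k$ and vanishes on an open set, so $w\equiv 0$ on the connected set $\Omega^*$. (Connectedness of $\Omega^*$ is legitimate: if $\Omega^*=\Omega_1\sqcup\Omega_2$ then $\Lambda(\Omega^*)=\min(\Lambda(\Omega_1),\Lambda(\Omega_2))$ and rescaling the minimizing component to volume $V_0$ strictly lowers $\Lambda$ by $\Lambda(t\Omega)=\Lambda(\Omega)/t^2$, contradicting minimality.) Since $u^1,\dots,u^m$ are linearly independent, $v=0$, so $\ker G=\{0\}$ and $\operatorname{rank}G=m$; with $\operatorname{rank}G\leq 2$ this gives $m\leq 2$. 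The main obstacle is not the unique continuation step — which decouples into scalar Helmholtz equations — but the careful handling of the first step: correctly invoking the one-sided differentiability of the multiple eigenvalue, the associated matrix $M(V)$, and the volume normalization to conclude $M(V)\equiv 0$.
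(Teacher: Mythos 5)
Your proof is correct, and its skeleton is the same as the paper's: first-order optimality for the multiple eigenvalue expressed through the $m\times m$ matrix $M(V)$ of polarized shape derivatives, the fact that minimality (taking both perturbation directions) forces this matrix to be trivial, the boundary identity $\mu\,\partial_n u^i\cdot\partial_n u^j+(\lambda+\mu)\operatorname{div}u^i\operatorname{div}u^j=\mu\,[\partial_n u^i]_\tau\cdot[\partial_n u^j]_\tau+(\lambda+2\mu)\operatorname{div}u^i\operatorname{div}u^j$, and a two-dimensionality argument. You differ in two places. First, you handle the volume constraint by restricting to fields with $\int_{\partial\Omega^*}V\cdot n=0$ and the scale-invariant functional, so you only conclude that each $g_{ij}$ is \emph{constant} on $\partial\Omega^*$; the paper perturbs freely, differentiates $|\Omega|\Lambda(\Omega)$, and gets the stronger statement that the off-diagonal entries vanish identically on the boundary. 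Your weaker conclusion still suffices because of your second difference: instead of reading off ``at least three pairwise orthogonal vectors in $\R^2$'' directly, you form the constant Gram-type matrix $G$, bound its rank by $2$ by evaluating the representation $G=\mu\,\mathbf a\mathbf a^\top+(\lambda+2\mu)\,\mathbf b\mathbf b^\top$ at a single boundary point, and prove nondegeneracy of $G$ by a unique-continuation argument (zero Cauchy data for a combination $w$, extension by zero, reduction to scalar Helmholtz equations for $\operatorname{div}w$ and $\operatorname{curl}w$, in the spirit of the Helmholtz decomposition the paper uses for the disk). This extra step addresses exactly what the paper's one-line conclusion glosses over: pairwise orthogonal vectors in $\R^2$ only yield a contradiction if they do not vanish, and excluding a vector $\sqrt{\mu}\,[\partial_n u^i]_\tau+\sqrt{\lambda+2\mu}\,\operatorname{div}(u^i)\,n$ that vanishes identically on $\partial\Omega^*$ requires precisely the Cauchy-data/unique-continuation argument you supply. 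So your route is a bit longer but more airtight at the endgame, while the paper's is shorter because unconstrained perturbations kill the off-diagonal terms outright. One minor imprecision: in your last step the connected open set on which unique continuation is applied should be a ball containing $\overline{\Omega^*}$ (where the zero-extended $w$ solves the system), rather than $\Omega^*$ itself; your extension-by-zero actually makes the connectedness of $\Omega^*$ irrelevant, and the parenthetical connectedness argument, though valid, is not needed.
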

 \begin{proof}
 In what follows, let us denote by $[y(x)]_\tau$ the tangential part of a vector field $y\in L^2(\partial\Omega,\R^2)$ at $x\in \partial\Omega$, in other words
 $$
 [y(x)]_\tau=y(x)-(y(x)\cdot n(x))n(x).
 $$
 Let us assume that $\Lambda(\Omega)$ has multiplicity $m\geq 3$. According to classical results for the derivative of multiple
 eigenvalues (see e.g. \cite{henrot06}, \cite{Buoso-Lamberti15},  \cite{CDM21}), the first order optimality conditions read:
let $V$ denote a smooth vector field, then the directional derivative of $|\Omega|\Lambda(\Omega)$ exists and it is the smallest eigenvalue of the $m\times m$ matrix $\mathcal{M}$ whose entries are
 $$V_0\mathcal{M}-\Lambda \int_{\partial\Omega} (V\cdot n)I_2, \quad \text{where }\mathcal{M}_{i,j}=-\int_{\partial\Omega} \left\lbrack \mu [\nabla u^i:\nabla u^j] + (\lambda+\mu) \operatorname{div} u^i \operatorname{div} u^j\right\rbrack V\cdot n,$$
where $(u^1,\dots,u^m)$ is an orthonormal basis of associated eigenfunctions. By minimality, this directional derivative has to be nonnegative.
Since we can take both $ V$ and $-V$, this shows  that $ \mathcal{M}=\frac{\Lambda}{V_0}  \int_{\partial\Omega} (V\cdot n) I_2$. In particular, 
$$
\mu [\nabla u^i:\nabla u^j] + (\lambda+\mu) \operatorname{div} u^i \operatorname{div} u^j=0\quad \text{on $\partial\Omega$, for }i\neq j,
$$
which rewrites
$$
\mu \frac{\partial u^i}{\partial n}\cdot \frac{\partial u^j}{\partial n} + (\lambda+\mu) \operatorname{div} u^i \operatorname{div} u^j=0\quad \text{on $\partial \Omega$ for }i\neq j,
$$
by using the Dirichlet boundary condition. Observe moreover that $\partial u^i/\partial n \cdot n=\sum_k \partial u^i_k/\partial n n_k=\operatorname{div}u^i$ and therefore, the condition above rewrites
$$
\mu \left[\frac{\partial u^i}{\partial n}\right]_\tau\cdot \left[\frac{\partial u^j}{\partial n}\right]_\tau + (\lambda+2\mu) \operatorname{div} u^i \operatorname{div} u^j=0\quad \text{on $\partial\Omega$ for }i\neq j,
$$
or equivalently
$$
\left(\sqrt{\mu} \left[\frac{\partial u^i}{\partial n}\right]_\tau+\sqrt{\lambda+2\mu} \operatorname{div}(u^i)n)\right)\cdot\left(\sqrt{\mu} \left[\frac{\partial u^j}{\partial n}\right]_\tau+\sqrt{\lambda+2\mu} \operatorname{div}(u^j)n)\right)=0\quad \text{on $\partial\Omega$ for }i\neq j.
$$
%with $\alpha$ and $\beta$ such that $\alpha^2=\mu$ and $\beta =-\alpha\pm \sqrt{\lambda+2\mu}$. 
We have obtained a family of (at least) three orthogonal vectors in $\R^2$, and thus a contradiction. 
 \end{proof}

 \section{The case of the disk}\label{secdisk}
 Our first aim is to compute the first eigenvalue of the unit disk in $\R^2$. We recall that
  the Lam\'e coefficients $\lambda,\mu$ are such that $\mu>0, \lambda+\mu >0$. The first eigenvalue is then defined by
 \begin{equation}\label{LambdaLame}
\Lambda:= \min_{u=(u_1,u_2) \in H^1_0(\Omega)^2} \frac{ \mu\left(\int_{\Omega}|\nabla u_1|^2 \; dx+\int_{\Omega}|\nabla u_2|^2 \; dx\right) +(\lambda+\mu) \int_{\Omega} ({\rm div}(u))^2 \;dx}{\int_{\Omega}(u_1)^2 \; dx+\int_{\Omega}( u_2)^2 \; dx} 
 \end{equation}
 and the PDE solved by the minimizer $u=(u_1,u_2)$ is
 \begin{equation}\label{pdeLame}
 \left\lbrace
 \begin{array}{cc}
 -\mu \Delta u -(\lambda+\mu) \nabla ({\rm div}(u)) = \Lambda u & \mbox{ in } \Omega\\
 u=0  & \mbox{ on } \partial\Omega .\\
 \end{array}
 \right.
 \end{equation}
 
 \subsection{Eigenvalues and eigenvectors of the unit disk}\label{sec;eig_disk}
 We follow the strategy proposed in Capoferri et al, \cite{CFLV23}.  
 We will need some Helmholtz decomposition of the vector $u$. Let us state a more general Lemma that will be also useful for the derivative later.
 \begin{lemma}\label{lem:helm}
 Let $v=(v_1,v_2)$ be a smooth function satisfying the equation
 \begin{equation}\label{eqsatv}
 -\mu \Delta v -(\lambda + \mu) \nabla({\rm div}(v))=\Lambda v +f
 \end{equation}
  in a smooth domain $\Omega$ with a given function $f$.
There exist two function $\psi_1$ and $\psi_2$ in $C^\infty(\Omega)$ such that 
%$\dot{u}_V$ expands as
\begin{equation}\label{helmoltz}
v+\frac{1}{\Lambda}\,f=\nabla \psi_1+\operatorname{curl}\psi_2\quad \text{in }\Omega.
\end{equation}
Furthermore $\psi_1$ and $\psi_2$ respectively satisfy the PDE
$$
-(\lambda+2\mu)\Delta \psi_1=\Lambda \psi_1\quad \text{in }\Omega,
$$
and
$$
-\mu\Delta \psi_2=\Lambda \psi_2-\frac{\mu}{\Lambda}\operatorname{curl}f  \quad \text{in }\Omega .
$$
\end{lemma}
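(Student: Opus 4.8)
The plan is to perform a Helmholtz-type decomposition of the vector field $w := v + \frac{1}{\Lambda} f$ and then show that the two potentials $\psi_1$, $\psi_2$ arising from this decomposition satisfy the claimed scalar Helmholtz equations. First I would rewrite equation \eqref{eqsatv} in the form
\begin{equation*}
\mu \Delta v + (\lambda+\mu)\nabla(\operatorname{div} v) + \Lambda v + f = 0,
\end{equation*}
and add and subtract $\mu \nabla(\operatorname{div} v)$ so as to group the second-order terms using the identity $\Delta v = \nabla(\operatorname{div} v) - \operatorname{curl}(\operatorname{curl} v)$ valid in $\R^2$ (with the convention $\operatorname{curl}$ of a scalar being the rotated gradient and $\operatorname{curl}$ of a vector being the scalar $\partial_1 v_2 - \partial_2 v_1$). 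This yields
\begin{equation*}
(\lambda+2\mu)\nabla(\operatorname{div} v) - \mu \operatorname{curl}(\operatorname{curl} v) + \Lambda v + f = 0,
\end{equation*}
so that $\Lambda v + f = -(\lambda+2\mu)\nabla(\operatorname{div} v) + \mu\operatorname{curl}(\operatorname{curl} v)$. Dividing by $\Lambda$ immediately exhibits $w = v + \frac1\Lambda f$ as a gradient plus a curl, namely with
\begin{equation*}
\psi_1 := -\frac{\lambda+2\mu}{\Lambda}\operatorname{div} v, \qquad \psi_2 := \frac{\mu}{\Lambda}\operatorname{curl} v,
\end{equation*}
which gives \eqref{helmoltz}; smoothness of $\psi_1,\psi_2$ follows from that of $v$.

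Next I would derive the scalar PDEs. Taking the divergence of \eqref{helmoltz} and using $\operatorname{div}\operatorname{curl}\psi_2 = 0$ gives $\Delta \psi_1 = \operatorname{div} w = \operatorname{div} v + \frac1\Lambda \operatorname{div} f$; then I substitute the definition of $\psi_1$, i.e. $\operatorname{div} v = -\frac{\Lambda}{\lambda+2\mu}\psi_1$, to get
\begin{equation*}
\Delta \psi_1 = -\frac{\Lambda}{\lambda+2\mu}\psi_1 + \frac{1}{\Lambda}\operatorname{div} f.
\end{equation*}
Here the statement claims $-(\lambda+2\mu)\Delta\psi_1 = \Lambda\psi_1$ with no forcing term, so this works cleanly only when $\operatorname{div} f = 0$; I would either note that hypothesis is implicitly in force (it is, in the intended application where $f$ comes from a divergence-free perturbation), or more honestly carry the extra term $\frac{\lambda+2\mu}{\Lambda}\operatorname{div} f$ along and remark it vanishes in the cases used later. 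Similarly, taking $\operatorname{curl}$ of \eqref{helmoltz} and using $\operatorname{curl}\nabla\psi_1 = 0$ together with $\operatorname{curl}\operatorname{curl}\psi_2 = -\Delta\psi_2$ gives $-\Delta\psi_2 = \operatorname{curl} w = \operatorname{curl} v + \frac1\Lambda\operatorname{curl} f$; substituting $\operatorname{curl} v = \frac{\Lambda}{\mu}\psi_2$ yields
\begin{equation*}
-\Delta\psi_2 = \frac{\Lambda}{\mu}\psi_2 + \frac{1}{\Lambda}\operatorname{curl} f,
\end{equation*}
i.e. $-\mu\Delta\psi_2 = \Lambda\psi_2 - \frac{\mu}{\Lambda}\operatorname{curl} f$ after rearranging — wait, multiplying by $\mu$ gives $-\mu\Delta\psi_2 = \Lambda\psi_2 + \frac{\mu}{\Lambda}\operatorname{curl} f$, so I would recheck the sign convention for $\operatorname{curl} f$ against the paper's \verb|\curl| macro to match the stated $-\frac{\mu}{\Lambda}\operatorname{curl} f$; this is purely a bookkeeping issue about orientation.

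The main obstacle here is not difficulty but care with the two-dimensional vector calculus identities and sign conventions: getting $\Delta v = \nabla\operatorname{div} v - \operatorname{curl}\operatorname{curl} v$ and $\operatorname{curl}\operatorname{curl}\psi_2 = -\Delta\psi_2$ with consistent orientation, and tracking whether the problem tacitly assumes $\operatorname{div} f = 0$ (true when $f = \dot\Lambda u$ or similar). A secondary point worth a sentence is that the decomposition \eqref{helmoltz} produced this way is \emph{global} on $\Omega$ with smooth potentials precisely because we read them off explicitly from $v$ rather than solving auxiliary elliptic problems, so no issues with the topology of $\Omega$ or boundary conditions arise — the potentials are only determined up to the usual harmonic ambiguity, which is irrelevant for the stated conclusion.
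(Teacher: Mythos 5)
Your proposal is correct and follows essentially the same route as the paper: both read off the potentials explicitly as $\psi_1\propto\operatorname{div} v$ and $\psi_2\propto\operatorname{curl} v$, obtain \eqref{helmoltz} from the two-dimensional identity $\Delta v=\nabla \operatorname{div} v-\operatorname{curl}\operatorname{curl} v$, and derive the scalar equations by taking the divergence and the curl of the equation. The two points you flag are features of the statement rather than gaps in your argument: the paper fixes $\psi_2=-\frac{\mu}{\Lambda}\operatorname{curl} v$, which produces the stated sign $-\frac{\mu}{\Lambda}\operatorname{curl} f$, and the $\psi_1$-equation as written tacitly assumes $\operatorname{div} f=0$, which holds in every application made in the paper (either $f=0$ or $f$ proportional to the divergence-free first eigenfunction of the disk).
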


\begin{proof}
This proof has been suggested by M. Levitin.
%Introduce the function $U$ given by 
%$$
% U:=\dot{u}_V+\frac{\langle d\Lambda(\Omega),V\rangle}{\Lambda(\Omega)}u_\Omega.
%$$
Let us first set
$$
\psi_1=-\frac{\lambda+2\mu}{\Lambda}\operatorname{div}\ v
\quad\text{and}\quad
\psi_2=-\frac{\mu}{\Lambda}\operatorname{curl}v=-\frac{\mu}{\Lambda}(\partial_x v_2-\partial_y v_1).
$$
According to \eqref{eqsatv}, one has
\begin{eqnarray*}
v+\frac{f}{\Lambda} &=& \frac{1}{\Lambda}\left(-\mu \Delta v-(\lambda+\mu)\nabla \operatorname{div}(v)\right)\\
&=& \frac{1}{\Lambda}\left(-\mu \left(\Delta v-\nabla \operatorname{div}(v)\right)-(\lambda+2\mu)\nabla \operatorname{div}(v)\right).
\end{eqnarray*}
Note that
$$
\Delta  v-\nabla \operatorname{div}(v)=\begin{pmatrix}
\partial_{yy}\ v_1 - \partial_{xy} v_2\\
\partial_{xx} v_2 - \partial_{xy} v_1
\end{pmatrix}=\begin{pmatrix}
-\partial_{y}\\
\partial_{x}
\end{pmatrix}(\partial_x v_2-\partial_y v_1).
$$
Combining the identities above, we thus infer that $v$ satisfies \eqref{helmoltz}.

Now, observe that we can write the equation \eqref{eqsatv} as
$$\mu \operatorname{curl}\operatorname{curl}(v)-(\lambda+2\mu) \operatorname{grad}\operatorname{div}(v)= \lambda v +f$$
Now, passing to the divergence in this equation and using $\operatorname{div}\operatorname{curl}=0$
and $\operatorname{div}\operatorname{grad}\operatorname{div}=\Delta \operatorname{div}$ yields
$$
-(\lambda+2\mu)\Delta \psi_1=\Lambda \psi_1\quad \text{in }\Omega .
$$ 
In the same way, taking the curl in this equation and using $\operatorname{curl}\operatorname{grad}=0$
and $\operatorname{curl}\operatorname{curl}= - \Delta $
yields 
$$
-\mu\Delta \psi_2=\Lambda \psi_2-\frac{\mu}{\Lambda}\operatorname{curl} f , \quad \text{in }\Omega.
$$
The conclusion follows.
\end{proof}
 
Now, to compute the eigenvalues and eigenvectors of the unit disk, we use the decomposition provided by Lemma \ref{lem:helm}
(with $v=u$ and $f=0$),
 \begin{equation}\label{helm0}
 u=\nabla \psi_1 + \operatorname{curl}\psi_2=\left(\begin{array}{c}
 \frac{\partial \psi_1}{\partial x} \,+ \frac{\partial \psi_2}{\partial y} \\
  \frac{\partial \psi_1}{\partial y} \, -  \frac{\partial \psi_2}{\partial x} 
 \end{array}\right)
 \end{equation}
 and we use that the scalar potentials $\psi_i,i=1,2$ satisfy an Helmholtz equation 
 \begin{equation}\label{helm1}
 -\Delta \psi_i=\omega_i^2  \psi_i \quad \mbox{in } \Omega
 \end{equation}
 where
 $$\omega_1^2=\frac{\Lambda}{\lambda+2\mu},\quad \omega_2^2=\frac{\Lambda}{\mu}\,.$$
 We introduce
 $$\omega=\sqrt{\Lambda},\ a_1=\frac{1}{\sqrt{\lambda+2\mu}},\ a_2=\frac{1}{\sqrt{\mu}}$$
 therefore, $\omega_1 =a_1\omega, \omega_2=a_2\omega$.

In polar coordinates, the general solution  of \eqref{helm1} is given, for $i=1,2$ by
\begin{equation}\label{exp1}
\psi_i(r,\theta)=a_{i,0} J_0(\omega_i r) +\sum_{k=1}^\infty J_k(\omega_i r) [a_{i,k} \cos k\theta + b_{i,k} \sin k\theta].
\end{equation}
 It remains to express the Dirichlet boundary conditions $u_1=u_2=0$ for $r=1$. Using the expression of the derivatives in polar coordinates,
 this leads to the system
 $$\left\lbrace
 \begin{array}{c}
 \cos\theta [\frac{\partial \psi_1}{\partial r} + \frac{\partial \psi_2}{\partial \theta}] - \sin\theta   [\frac{\partial \psi_1}{\partial \theta} - \frac{\partial \psi_2}{\partial r}]=0\\
 \sin\theta  [\frac{\partial \psi_1}{\partial r} + \frac{\partial \psi_2}{\partial \theta}] + \cos\theta [\frac{\partial \psi_1}{\partial \theta} - \frac{\partial \psi_2}{\partial r}]=0
 \end{array}
 \right.$$
 for which we infer 
 \begin{equation}\label{exp2}
 \frac{\partial \psi_1}{\partial r} + \frac{\partial \psi_2}{\partial \theta}=0 ,\qquad \frac{\partial \psi_1}{\partial \theta} - \frac{\partial \psi_2}{\partial r}=0
 \end{equation}
 these equalities being true for $r=1$. Using the expression of $\psi_1,\psi_2$ given in \eqref{exp1}, we get by identification for the constant term
 (and using the fact that $J_0'=-J_1$):
 $$a_{1,0}J_1(a_1\omega)=0,\quad a_{2,0} J_1(a_2\omega)=0.$$
 This provides the sequence of eigenvalues $(\lambda+2\mu) j_{1,k}^2$ and $\mu j_{1,k}^2$ where $j_{1,k}$
 is the sequence of zeros of the Bessel function $J_1$. Among all these values, the smallest one is $\mu j_{1,1}^2$ since
 $\lambda+2\mu > \mu$ by assumption. Therefore, 
 \begin{equation}\label{candidat1}
 \mbox{a candidate to be the first eigenvalue $\Lambda$ is }  \mu j_{1,1}^2.
 \end{equation}
 Now we look at the coefficients in $\cos k\theta$ and $\sin k\theta$. coming from \eqref{exp2}. We obtain the two systems
\begin{equation}\label{sys1}
 \left\lbrace
 \begin{array}{l}
 \omega_1 J'_k(\omega_1) a_{k,1} + k J_k(\omega_2) b_{k,2} =0 \\
 k J_k(\omega_1) a_{k,1} + \omega_2 J'_k(\omega_2) b_{k,2} +  =0
 \end{array}
 \right.
 \end{equation} 
 and
 \begin{equation}\label{sys2}
 \left\lbrace
 \begin{array}{l}
 \omega_1 J'_k(\omega_1) b_{k,1} - k J_k(\omega_2) a_{k,2} =0 \\
 k J_k(\omega_1) b_{k,1} - \omega_2 J'_k(\omega_2) a_{k,2} +  =0 .
 \end{array}
 \right.
 \end{equation} 
 The determinant of these two systems is the same and it must vanish if we look for a non-trivial solution. This leads to the following
 transcendental equations that determines the other eigenvalues
 \begin{equation}\label{trans1}
 a_1a_2\omega^2 J'_k(a_1\omega)J'_k(a_2\omega)-k^2 J_k(a_1\omega)J_k(a_2\omega) =0.
 \end{equation}
 Using the classical relations for the derivative of Bessel functions, we can rewrite \eqref{trans1}
  \begin{equation}\label{trans2}
 \frac{k}{a_1\omega} J_k(a_1\omega)J_{k-1}(a_2\omega) +  \frac{k}{a_2\omega} J_{k-1}(a_1\omega)J_{k}(a_2\omega)
 - J_{k-1}(a_1\omega)J_{k-1}(a_2\omega)=0
 \end{equation}
 or
   \begin{equation}\label{trans3}
 \frac{k}{a_1\omega} J_k(a_1\omega)J_{k+1}(a_2\omega) +  \frac{k}{a_2\omega} J_{k+1}(a_1\omega)J_{k}(a_2\omega)
 - J_{k+1}(a_1\omega)J_{k+1}(a_2\omega)=0 .
 \end{equation}
 Now to determine the first eigenvalue of the elasticity operator, we need to know whether the smallest solution of the previous
 transcendental equations can be smaller than the value $\mu j_{1,1}^2$ already obtained. In that case, the first eigenvalue would be double,
 systems \eqref{sys1} and \eqref{sys2} providing two independent solutions associated to the same eigenvalue.
 
 Let us state the following characterization where we see that the first eigenvalue actually depends on the Poisson coefficient $\nu$:
 \begin{theorem}\label{theopoisson}
 Let $\nu^*$ be the number
\begin{equation}\label{nustar}
  \nu^*:=\frac{j_{1,1}^2-2{j'_{1,1}}^2}{2j_{1,1}^2-2{j'_{1,1}}^2}\simeq 0.349895
  \end{equation}  
where  $j_{1,1}$ is the first zero of the Bessel function $J_1$ and  $j'_{1,1}$ is the first zero of its derivative $J'_1$.
 Assume that the Poisson coefficient $\nu$ satisfies
 \begin{equation}\label{poisson1}
 \nu\leq \nu^*,
 \end{equation}
then the first eigenvalue is given as a solution of the transcendental equation \eqref{trans2}
 for some $k$ and then it is at least double.
Assume that the Poisson coefficient $\nu$ satisfies
 \begin{equation}\label{poisson2}
 \nu\geq \nu^*
 \end{equation}
then the first eigenvalue is $\Lambda=\mu j_{1,1}^2$. Moreover, it is a simple eigenvalue as soon as $\nu > \nu^*$.
 \end{theorem}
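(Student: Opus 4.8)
The plan is to compare the candidate value $\mu j_{1,1}^2$ of \eqref{candidat1} with the positive roots of the transcendental equations \eqref{trans1}, over all angular modes $k\ge 1$. By the separation of variables performed above, the eigenvalues of the disk are exactly the numbers $(\lambda+2\mu)j_{1,m}^2$, $\mu j_{1,m}^2$ (coming from the mode $k=0$) together with the positive roots of \eqref{trans1} for $k\ge 1$; since $\mu<\lambda+2\mu$, the smallest element of the first two families is $\mu j_{1,1}^2$, so the first eigenvalue equals $\min\{\mu j_{1,1}^2,\ \inf_{k\ge 1}\Lambda_k\}$, where $\Lambda_k$ is the smallest positive root of \eqref{trans1} at level $k$ (when it exists). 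It is convenient to set $s=\sqrt{\Lambda/\mu}$ and $\tau=\sqrt{\mu/(\lambda+2\mu)}$; by \eqref{lamepoi} one has $\tau^2=\frac{1-2\nu}{2-2\nu}\in(0,1)$, a strictly decreasing function of $\nu$, the candidate $\mu j_{1,1}^2$ corresponds to $s=j_{1,1}$, and \eqref{trans1} reads $H_k(s):=\tau s^2 J'_k(\tau s)J'_k(s)-k^2 J_k(\tau s)J_k(s)=0$. Solving $\tau^2=(j'_{1,1}/j_{1,1})^2$ for $\nu$ gives exactly the number $\nu^*$ of \eqref{nustar}, and since $\tau$ is decreasing in $\nu$ this yields $\tau j_{1,1}>j'_{1,1}\iff\nu<\nu^*$ and $\tau j_{1,1}=j'_{1,1}\iff\nu=\nu^*$.

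The analytic ingredient is a one-sided monotonicity of the logarithmic derivative $\rho_k(x):=xJ'_k(x)/J_k(x)$. Bessel's equation $x^2J''_k+xJ'_k+(x^2-k^2)J_k=0$ gives $\rho'_k(x)=\frac1x\bigl(k^2-x^2-\rho_k(x)^2\bigr)$. As $\rho_k(0^+)=k$ and $\rho_k(x)=k-\frac{x^2}{2(k+1)}+O(x^4)$, a first-entrance argument (if $x_0>0$ is minimal with $\rho_k(x_0)=k$, then $\rho'_k(x_0)=-x_0<0$, which contradicts an approach to the value $k$ from below) yields $\rho_k(x)<k$ on $(0,j_{k,1})$; moreover $\rho_k(x)\in[0,k)$ on $(0,j'_{k,1}]$, since $J_k,J'_k>0$ there.

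Assume first $\nu\ge\nu^*$, i.e.\ $\tau j_{1,1}\le j'_{1,1}$. Then for every $s\in(0,j_{1,1})$ and every $k\ge1$ we have $\tau s<j'_{1,1}\le j'_{k,1}$, so $J_k(\tau s)>0$ and $J'_k(\tau s)>0$. If $s>j'_{k,1}$ then $J'_k(s)<0$, the first term of $H_k(s)$ is $<0$ and the second is $<0$, so $H_k(s)<0$. If $s\le j'_{k,1}$ then $J'_k(s)\ge0$, and dividing $H_k(s)<0$ by $k^2J_k(\tau s)J_k(s)>0$ reduces it to $\rho_k(\tau s)\rho_k(s)<k^2$, which holds because both factors lie in $[0,k)$. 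Hence $H_k<0$ on $(0,j_{1,1})$ for all $k\ge1$: no eigenvalue lies below $\mu j_{1,1}^2$, so the first eigenvalue is $\mu j_{1,1}^2$. If in addition $\nu>\nu^*$, then $\tau j_{1,1}<j'_{1,1}$, and the same reasoning at $s=j_{1,1}$ shows $H_k(j_{1,1})<0$ for all $k\ge1$ (for $k=1$ this reads $\tau j_{1,1}^2 J'_1(\tau j_{1,1})J'_1(j_{1,1})<0$ because $J'_1(\tau j_{1,1})>0>J'_1(j_{1,1})$); therefore $\mu j_{1,1}^2$ is not a root of any \eqref{trans1}, and since the mode $k=0$ realizes $\mu j_{1,1}^2$ only through the single rotational field $\operatorname{curl}\bigl(J_0(j_{1,1}r)\bigr)$, the eigenvalue is simple.

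Assume now $\nu\le\nu^*$; it suffices to treat the mode $k=1$. For small $s>0$ one has $J'_1(s)>0$ and $\rho_1(\tau s),\rho_1(s)\in(0,1)$, hence $H_1(s)<0$ near $0$ (equivalently $H_1(s)=-\frac{\tau(1+\tau^2)}{16}s^4+O(s^6)$). At the other end, $J_1(j_{1,1})=0$ gives $H_1(j_{1,1})=\tau j_{1,1}^2 J'_1(\tau j_{1,1})J'_1(j_{1,1})$; for $\nu<\nu^*$ we have $j'_{1,1}<\tau j_{1,1}<j'_{1,2}$ (the right-hand inequality being a crude consequence of $\tau<1$), so $J'_1(\tau j_{1,1})<0$ and, since also $J'_1(j_{1,1})<0$, $H_1(j_{1,1})>0$. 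By the intermediate value theorem $H_1$ vanishes in $(0,j_{1,1})$, so $\Lambda_1<\mu j_{1,1}^2$ and the first eigenvalue is a root of \eqref{trans1}, hence of \eqref{trans2}, for some $k\ge1$; when $\nu=\nu^*$ the point $s=j_{1,1}$ is itself a root of \eqref{trans1} with $k=1$ (since $J'_1(\tau j_{1,1})=J'_1(j'_{1,1})=0$), so the conclusion holds there as well, with $\Lambda_1=\mu j_{1,1}^2$. In every case $\nu\le\nu^*$ the minimal eigenvalue originates from a mode $k\ge1$, for which \eqref{sys1} and \eqref{sys2} share the same (vanishing) determinant and each contribute an eigenfunction of angular type $\cos k\theta$ and $\sin k\theta$ respectively, so the eigenvalue is at least double. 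The main obstacle is the case $\nu\ge\nu^*$ with $k\in\{2,3\}$, where $J'_k$ changes sign inside $(0,j_{1,1})$ and the sign of $H_k$ is not transparent; it is precisely the barrier estimate $\rho_k<k$ on $(0,j_{k,1})$ that makes the case split go through. (Completeness of the list of eigenvalues, the fact that a vanishing determinant in \eqref{sys1}--\eqref{sys2} produces a genuine non-zero eigenfunction, and the identity $\tau j_{1,1}=j'_{1,1}\iff\nu=\nu^*$ are elementary.)
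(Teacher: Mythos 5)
Your proposal is correct and follows essentially the same route as the paper: the mode-by-mode reduction of the disk spectrum, an intermediate value argument on the $k=1$ transcendental equation with the threshold $\nu^*$ arising from the position of $\sqrt{\mu/(\lambda+2\mu)}\,j_{1,1}$ relative to $j'_{1,1}$, and the bound $xJ'_k(x)/J_k(x)<k$ on $(0,j_{k,1})$ to exclude roots below $\mu j_{1,1}^2$ when $\nu\ge\nu^*$. Your self-contained Riccati (first-entrance) proof of that bound and the explicit case split on the sign of $J'_k(s)$ are in fact slightly more careful than the paper's citation-based and terser treatment of the same steps, but the strategy is identical.
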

 \begin{remark}
 Note that when $\nu=\nu^*$ the first eigenvalue is (at least) triple and equal to $\mu j_{1,1}^2$.
 \end{remark}
 \begin{proof}
 Let us introduce the function $F_k$ defined by
 $$F_k(\omega)= \frac{k}{a_1\omega} J_k(a_1\omega)J_{k-1}(a_2\omega) +  \frac{k}{a_2\omega} J_{k-1}(a_1\omega)J_{k}(a_2\omega)
 - J_{k-1}(a_1\omega)J_{k-1}(a_2\omega).$$
 When $\omega$ is small, using the Taylor expansion of the Bessel function near 0, we obtain
 $$F_k(\omega)=\frac{a_1^{k-1}a_2^{k-1}(a_1^2+a_2^2)}{[(k-1)!]^2 2^{2k+1}k(k+1)}\,\omega^2 + o(\omega^2)$$
 that shows in particular that $F_k(\omega)>0$ for $\omega>0$ small.
 
 Now let us look at $F_1$ and evaluate it at $\omega^*=\sqrt{\mu j_{1,1}^2}=j_{1,1}/a_2$. Since $J_1(a_2\omega^*)=0$ we get
 $$F_1(\omega^*)=\frac{J_0(a_2\omega^*)}{a_1\omega^*}\left(J_1(a_1\omega^*)-a_1\omega^* J_0(a_1\omega^*)\right).$$
 If we can prove that $F_1(\omega^*) \leq 0$, then $F_1$ changes its sign between 0 and $\omega^*$ that implies the fact that
 the first eigenvalue is a zero of the transcendental equation.
 
 Now $J_1(x)-xJ_0(x)=-xJ'_1(x)$ and this is negative between 0 and $j'_{1,1}$ and positive between $j'_{1,1}$ and $j'_{1,2}$.
 On the other hand, the term $J_0(a_2\omega^*)=J_0(j_{1,1} )<0$ therefore, we want to find the case where $x=a_1\omega^*$
 belongs to the interval $[j'_{1,1},j'_{1,2}]$. Now
 $$a_1\omega^*=\sqrt{\frac{\mu}{\lambda+2\mu}} j_{1,1}\in [j'_{1,1},j'_{1,2}] \Leftrightarrow \frac{{j'_{1,1}}^2}{j_{1,1}^2}\leq \frac{1-2\nu}{2-2\nu} \leq
 \frac{{j'_{1,2}}^2}{j_{1,1}^2}$$
 where we use the expression of $\mu/(\lambda+2\mu)$ in term of $\nu$. Solving the previous inequality in $\nu$
 provides the desired result from the left inequality. The right inequality is automatically satisfied since $-1\leq \nu <0.5$.
 
 Now, it remains to prove that, when $\nu \geq \nu^*$ the first eigenvalue is $\mu j_{1,1}^2$ (and is simple when $\nu > \nu^*$).
 Let us introduce $\psi_k(x):=x J_k'(x)/J_k(x)$. It is known that the function $\psi_k$ is decreasing on all interval in $\R_+$
 where it is defined, and in particular on $[0, j_{k,1})$. We refer to \cite{spigler} or \cite{landau99} for that assertion. Moreover $\psi_k(0)=k$.
 This implies that $\psi_k(x) < k$ for $x\in (0,j_{k,1})$ and for any $k$.
 Now, let us assume that $\omega$ is such that $\omega < \sqrt{\mu j_{1,1}^2}$. Since $\lambda+2\mu > \mu$, we have $a_1<a_2$
 and therefore
 $$a_1 \omega < a_2 \omega < a_2  \sqrt{\mu j_{1,1}^2} = j_{1,1} \leq j_{k,1} \quad \mbox{for all } k\geq 1 .$$
 Therefore $J_k(a_1 \omega) >0$ and $J_k(a_2\omega) >0$ for all $\omega < \sqrt{\mu j_{1,1}^2}$. 
 Now, let us rewrite the transcendental equation \eqref{trans1} as (we can divide by $J_k(a_1 \omega) J_k(a_2 \omega) $ that is positive)
 \begin{equation}\label{eqpsik}
 \psi_k(a_1 \omega) \psi_k(a_2 \omega) - k^2 =0 .
 \end{equation}
 Now, the properties we recalled on $\psi_k$ and the fact that 
$a_i \omega < j_{k,1}$ show that the first member of \eqref{eqpsik} is strictly negative when $\omega < \sqrt{\mu j_{1,1}^2}$.
This proves the thesis.
 \end{proof}

 \subsection{Optimality of the disk: first order arguments}
 We wonder whether a Faber-Krahn type inequality holds for the elasticity operator. We will see that it depends actually of the Poisson coefficient.
 Roughly speaking, when the first eigenvalue $\Lambda$ is double, we can prove that the disk is not a minimizer, while when $\Lambda$
 is simple, we can prove that the disk is at least a local minimizer. Let us start by the first possibility:
 \begin{theorem}\label{theonotdisk}
 Assume that the Poisson coefficient $\nu$ satisfies \eqref{poisson1} with a strict inequality. Then the disk does not minimize $\Lambda$ 
 among open sets  of given volume.
 \end{theorem}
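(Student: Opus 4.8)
The plan is to argue by contradiction, using that when $\nu<\nu^*$ the first eigenvalue of the disk is (at least) double, together with the first-order optimality condition for multiple eigenvalues. Since $\Lambda(t\Omega)=\Lambda(\Omega)/t^2$ we may take $D$ to be the unit disk of $\R^2$. By Theorem~\ref{theopoisson}, $\Lambda:=\Lambda(D)$ is a root of the transcendental equation \eqref{trans2} for some mode $k\geq 1$, so using the Helmholtz decomposition of Lemma~\ref{lem:helm} an associated eigenfunction $u^1$ has scalar potentials $\psi_1=a_{k,1}J_k(\omega_1 r)\cos k\theta$ and $\psi_2=b_{k,2}J_k(\omega_2 r)\sin k\theta$, with $a_{k,1},b_{k,2}$ coupled through \eqref{sys1}; I take as second eigenfunction $u^2$ the rotation of $u^1$ by the angle $\pi/(2k)$, which is again an eigenfunction and (after normalisation) is $L^2$-orthogonal to $u^1$, as a short computation on the angular integrals shows. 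If $D$ were a minimizer it would have a $\mathscr C^3$ boundary, so the shape-derivative machinery of Section~\ref{secoptim} applies and, by the lemma on the multiplicity of minimal eigenvalues, $\Lambda$ is exactly double and $\{u^1,u^2\}$ is an orthonormal basis of its eigenspace.

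Assume now $D$ is a minimizer. For every compactly supported $V\in W^{4,\infty}(\R^2,\R^2)$ with $\int_{\partial D}V\cdot n=0$, the one-sided shape derivative of $\Lambda$ at $D$ in the direction $V$ equals the lowest eigenvalue of the symmetric $2\times 2$ matrix $\mathcal M(V)$ with entries $\mathcal M_{ij}(V)=-\int_{\partial D}\big(\mu\,(\nabla u^i)n\cdot (\nabla u^j)n+(\lambda+\mu)\operatorname{div}u^i\operatorname{div}u^j\big)(V\cdot n)$, the multiple-eigenvalue counterpart of \eqref{Eq:D1Lambda} used already in the multiplicity lemma. Minimality forces $\lambda_{\min}(\mathcal M(V))\geq 0$ for all such $V$, and replacing $V$ by $-V$ then gives $\mathcal M(V)\equiv 0$; in particular the boundary densities entering $\mathcal M_{12}$ and $\mathcal M_{11}-\mathcal M_{22}$ vanish on $\partial D$. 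I compute these using that $u^1=0$ on $\partial D$: there $(\nabla u^1)n$ has only a radial-derivative part, so in the polar frame $(\nabla u^1)n=(\operatorname{div}u^1)\,n+(\operatorname{curl}u^1)\,\tau$ and $|(\nabla u^1)n|^2=(\operatorname{div}u^1)^2+(\operatorname{curl}u^1)^2$; moreover by Lemma~\ref{lem:helm}, up to sign, $\operatorname{div}u^1=-\omega_1^2\psi_1$ and $\operatorname{curl}u^1=-\omega_2^2\psi_2$, whence on $\partial D$ one gets $\operatorname{div}u^1=P\cos k\theta$, $\operatorname{curl}u^1=Q\sin k\theta$ with $|P|=\omega_1^2|a_{k,1}J_k(\omega_1)|$, $|Q|=\omega_2^2|b_{k,2}J_k(\omega_2)|$, and the analogous quantities for $u^2$ obtained by the $\pi/(2k)$-rotation. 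Inserting these and using $\mu|(\nabla u^1)n|^2+(\lambda+\mu)(\operatorname{div}u^1)^2=(\lambda+2\mu)(\operatorname{div}u^1)^2+\mu(\operatorname{curl}u^1)^2$, one finds that the $\mathcal M_{12}$-density is $\big[(\lambda+2\mu)P^2-\mu Q^2\big]\cos k\theta\sin k\theta$ (and the $\mathcal M_{11}-\mathcal M_{22}$ density is $\big[(\lambda+2\mu)P^2-\mu Q^2\big]\cos 2k\theta$). Hence the whole first-order condition collapses to the single scalar identity $(\lambda+2\mu)P^2=\mu Q^2$.

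It remains to show that this identity cannot hold at the first eigenvalue, which is the heart of the proof. One first notes that at the first eigenvalue the transcendental equation \eqref{trans1} forces $\omega_1,\omega_2\in(j'_{k,1},j_{k,1})$ and $\psi_k(\omega_1),\psi_k(\omega_2)<0$: indeed, since $\psi_k<k$ on $(0,j_{k,1})$ (recalled in the proof of Theorem~\ref{theopoisson}) and $\psi_k$ is $>0$ only on $(0,j'_{k,1})$, the relation $\psi_k(\omega_1)\psi_k(\omega_2)=k^2$ rules out any other possibility. Then, eliminating $a_{k,1},b_{k,2}$ from $(\lambda+2\mu)P^2=\mu Q^2$ by means of \eqref{sys1} and \eqref{trans1}, one checks that this identity is equivalent to $\psi_k(\omega_1)^2=k^2\mu/(\lambda+2\mu)$, i.e.\ (recalling $\omega_i=a_i\omega$ and the sign just established) to $\psi_k(\omega_1)=-k\sqrt{\mu/(\lambda+2\mu)}$; using the strict monotonicity of $\psi_k$ and the precise location of the first zero of $F_k$ one verifies that this equality fails for $\nu<\nu^*$, and the resulting contradiction shows that $D$ is not a minimizer. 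The main obstacle is exactly this last verification: it amounts to controlling the value $\psi_k(\omega_1)$ at the first eigenvalue of the disk, which depends on $\nu$ only implicitly through the transcendental equation — the case $\nu=\nu^*$ being transparent (there $Q=0$ since $\omega_2=j_{1,1}$, so the identity visibly fails), but the general $\nu<\nu^*$ requiring a careful Bessel-function estimate.
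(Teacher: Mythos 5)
Your reduction is sound and runs parallel to the paper's own argument: you invoke the same first-order formula for the directional derivative of a multiple eigenvalue at the disk, and your route (forcing the whole matrix $\mathcal M(V)$ to vanish for every zero-mean $V\cdot n$, computed in the basis $\{u^1,\,u^1\ \text{rotated by }\pi/2k\}$) is equivalent to the paper's, which only computes the single entry $\mathcal M_{1,1}$ for the single field $V\cdot n=\alpha\cos 2k\theta$, see \eqref{m112}. Both reduce the first-order optimality condition to the same scalar identity: with the normalisation coming from \eqref{sys1}, your $(\lambda+2\mu)P^2-\mu Q^2$ is a positive multiple of $(\lambda+2\mu)k^2\omega_1^2J_k^2(\omega_1)-\mu\omega_2^4{J'_k}^2(\omega_1)$, which is exactly the bracket the paper must show to be nonzero. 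Your observation that \eqref{trans1}, written as \eqref{eqpsik}, forces $\psi_k(\omega_1),\psi_k(\omega_2)<0$ at the first eigenvalue is correct and already removes the branch $\psi_k(\omega_1)=+k a_1/a_2$. Two imprecisions are harmless: $\mathcal M(V)=0$ for all zero-mean $V\cdot n$ only gives that the boundary densities are \emph{constant}, not zero (same conclusion here, since they are pure $\cos 2k\theta$ and $\sin 2k\theta$ modes), and the appeal to the multiplicity lemma of Section \ref{secoptim} to get an exactly two-dimensional eigenspace is legitimate under the contradiction hypothesis, although the paper sidesteps it by needing only one negative diagonal entry of the $m\times m$ matrix.

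The proof is nevertheless incomplete: the decisive step, ruling out $\psi_k(\omega_1)=-k\sqrt{\mu/(\lambda+2\mu)}=-k a_1/a_2$ at the first eigenvalue, is precisely what you leave open (you concede it is ``the main obstacle \ldots requiring a careful Bessel-function estimate''), and without it no contradiction is reached, so the theorem is not proved. The paper closes this step without any quantitative estimate of $\psi_k(\omega_1)$: it uses the transcendental equation once more to transfer the condition to the other argument, namely $\psi_k(\omega_1)=\pm k a_1/a_2$ combined with \eqref{eqpsik} gives $\psi_k(\omega_2)=\pm k a_2/a_1$ with the same sign (this is \eqref{psiw}). The plus branch is impossible because $\psi_k\le k< k a_2/a_1$; the minus branch is excluded for $k\ge 3$ because $\omega_2<j_{1,1}<j'_{k,1}$ gives $\psi_k(\omega_2)>0$, and for $k=2$ because $\psi_2$ is decreasing and $j_{1,1}J'_2(j_{1,1})=-2J_2(j_{1,1})$ yield $\psi_2(\omega_2)\ge \psi_2(j_{1,1})=-2$, incompatible with $-2a_2/a_1<-2$ since $a_2>a_1$. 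In other words, the missing ingredient is not a delicate Bessel estimate at $\omega_1$ but the trick of evaluating the degeneracy condition at $\omega_2=a_2\omega$, where the elementary bound $\psi_k\le k$ and the special value $\psi_2(j_{1,1})=-2$ suffice; you would need to add this (or an equivalent argument) for your proof to stand.
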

 \begin{proof}
 We will use a first order optimality argument for which we need the expression of the eigenvectors. As we have seen in Theorem \ref{theopoisson},
 when $\nu$ satisfies \eqref{poisson1}, the eigenvalue is (at least) double and the two eigenvectors can be obtained through the systems
 \eqref{sys1} and \eqref{sys2} with $\omega$ defined as the smallest solution of all the equations \eqref{trans1} (or  \eqref{trans2},  \eqref{trans3}). The value of the integer  $k$ will not be really important here.
 
 Let us choose for example
 $$a_{1,k}=k J_k(\omega_2), \qquad b_{2,k}=-\omega_1 J'_k(\omega_1)$$
 that satisfy system \eqref{sys1}. (we recall that $\omega_1=a_1\omega$ and $\omega_2=a_2\omega$). Then
 $$\psi_1(r,\theta)=a_{1,k} J_k(\omega_1 r)\cos k\theta,\qquad \psi_2(r,\theta)=b_{2,k} J_k(\omega_2 r)\sin k\theta.$$
 Using \eqref{helm0}, we obtain $u=(u_1,u_2)$ with
 \begin{eqnarray*}
 u_1=a_{1,k}\left(\omega_1 \cos\theta \cos k\theta J'_k(\omega_1 r) + \frac{k\sin\theta}{r} \sin k\theta J_k(\omega_1 r)\right) +\\
 b_{2,k}\left(\omega_2 \sin\theta \sin k\theta J'_k(\omega_2 r) + \frac{k\cos\theta}{r} \cos k\theta J_k(\omega_2 r)\right)
 \end{eqnarray*}
  \begin{eqnarray*}
 u_2=a_{1,k}\left(\omega_1 \sin\theta \cos k\theta J'_k(\omega_1 r) - \frac{k\cos\theta}{r} \sin k\theta J_k(\omega_1 r)\right) +\\
 b_{2,k}\left(-\omega_2 \cos\theta \sin k\theta J'_k(\omega_2 r) + \frac{k\sin\theta}{r} \cos k\theta J_k(\omega_2 r)\right) .
 \end{eqnarray*}
 In principle, we must multiply the previous expressions by a normalization factor in order to satisfy $\int_\Omega u_1^2+u_2^2 =1$,
 but it turns out that this factor has no importance in the computation we present now.
 
 The shape derivative of a multiple eigenvalue is now a classical topic: in the case of the elasticity operator, we refer for example
 to the recent paper \cite{CDM21}. To sum up, let us assume that the eigenvalue has multiplicity $m$ and denote by 
 $u^1,u^2, \ldots u^m$ a set of orthonormal eigenvectors. Then
 if we perturb the boundary of $\Omega$ by a vector field $V$, the first eigenvalue has a semi-derivative (or directional derivative)
 that is given as the smallest eigenvalue of the $m\times m$ matrix $\mathcal{M}$ whose entries are
 $$\mathcal{M}_{i,j}=-\int_{\partial\Omega} \left\lbrack \mu [\nabla u^i:\nabla u^j] + (\lambda+\mu) \operatorname{div} u^i \operatorname{div} u^j\right\rbrack V\cdot n$$
 (where $n$ is here the exterior normal vector). So our thesis will be proved if we can prove that this matrix has a negative eigenvalue
 for a vector field preserving the area, i.e. a vector field $V$ such that $\int_{\partial\Omega} V\cdot n =0$.
 For that purpose, it is sufficient to look at the first term $\mathcal{M}_{1,1}$ and prove that it can be chosen negative (that will imply that
 the symmetric matrix $\mathcal{M}$ is not positive and therefore has a negative eigenvalue). This term being given by
 $$\mathcal{M}_{1,1}=-\int_{\partial\mathbb{D}} \left\lbrack \mu (|\nabla u_1|^2+|\nabla u_2|^2) + (\lambda+\mu) (\operatorname{div} u)^2\right\rbrack V\cdot n$$
 we have to compute on the unit circle $|\nabla u_1|^2, |\nabla u_2|^2$ and $(\operatorname{div} u)^2$.
 
 From the Helmholtz decomposition \eqref{helm0}, it comes 
 $$\operatorname{div} u=\Delta \psi_1=-\omega_1^2 \psi_1=-a_{1,k} \omega_1^2J_k(\omega_1 r) \cos k\theta$$
 so, on the unit circle
 \begin{equation}\label{div1}
 (\operatorname{div} u)^2=a_{1,k}^2 \omega_1^4J_k(\omega_1 )^2 \cos^2 k\theta.
 \end{equation}
 Now, $u_1$ and $u_2$ being constant on the unit circle, we have $|\nabla u_i|^2=\left(\frac{\partial u_i}{\partial r}\right)^2$ with $r=1$.
 Using the formula of $u_1,u_2$, we can write
 \begin{equation}\label{gradu1}
 \frac{\partial u_1}{\partial r}=A_1 \cos\theta \cos k\theta + B_1 \sin\theta \sin k\theta
 \end{equation}
 with
  \begin{eqnarray*}
 A_1=a_{1,k} \omega_1^2 {J_k}''(\omega_1) -k b_{2,k} J_k(\omega_2)+b_{2,k} k \omega_2 J'_k(\omega_2) \\
 B_1=-k a_{1,k} {J_k}(\omega_1) +a_{1,k} k \omega_1 J'_k(\omega_1) +b_{2,k} \omega_2^2 {J_k}''(\omega_2).
 \end{eqnarray*}
 Using the Bessel differential equation to replace ${J_k}''(\omega_i), i=1,2$ by a combination of $J'_k(\omega_i)$ and $J_k(\omega_i)$,
 together with the choice we have done for $a_{1,k}$ and $b_{2,k}$ and the transcendental equation \eqref{trans1},
 we can simplify the previous expressions as 
 \begin{equation}\label{gradu12} 
 A_1=-k \omega_1^2 J_k(\omega_1) J_k(\omega_2), \quad \; B_1=\omega_1\omega_2^2 J'_k(\omega_1) J_k(\omega_2).
 \end{equation}
 In the same way, we obtain 
  \begin{equation}\label{gradu2}
 \frac{\partial u_2}{\partial r}=A_2 \sin\theta \cos k\theta + B_2 \cos\theta \sin k\theta
 \end{equation}
 with 
  \begin{equation}\label{gradu123} 
 A_2=-k \omega_1^2 J_k(\omega_1) J_k(\omega_2)=A_1, \quad \; B_2=-\omega_1\omega_2^2 J'_k(\omega_1) J_k(\omega_2)=-B_1.
 \end{equation}
 Therefore
 \begin{eqnarray*}
 |\nabla u_1|^2+|\nabla u_2|^2=A_1^2\cos^2 k\theta +B_1^2 \sin^2 k\theta = \\
 \omega_1^2 J_k^2(\omega_2) \left(\omega_1^2 k^2 J_k^2(\omega_1) \cos^2 k\theta +\omega_2^4 {J'_k}^2(\omega_1)\sin^2 k\theta\right).
 \end{eqnarray*}
 With $(\operatorname{div} u)^2$ given by \eqref{div1} we finally get
 \begin{equation}\label{m11}
 \mathcal{M}_{1,1}=\omega_1^2 J_k^2(\omega_2) \int_0^{2\pi} \left((\lambda+2\mu)k^2\omega_1^2J_k^2(\omega_1)\cos^2 k\theta
 +\mu \omega_2^4  {J'_k}^2(\omega_1)\sin^2 k\theta\right) V\cdot n .
 \end{equation}
 As explained before,  in order to conclude the proof, it suffices to find a deformation field $V$ such that $\int_0^{2\pi} V\cdot n =0$ and 
 $ \mathcal{M}_{1,1} <0$. Let us choose $V$ such that $V(1,\theta)=\alpha \cos (2k\theta)$. Plugging this value in $ \mathcal{M}_{1,1}$ yields
 \begin{equation}\label{m112}
 \mathcal{M}_{1,1}=\omega_1^2 J_k^2(\omega_2) \frac{\pi \alpha}{2} \,\left((\lambda+2\mu)k^2\omega_1^2J_k^2(\omega_1)
 -\mu \omega_2^4  {J'_k}^2(\omega_1)\right).
 \end{equation}
 The quantity $ \mathcal{M}_{1,1}$ being linear in $\alpha$, in order to conclude we just need to prove that the right-hand side of \eqref{m112}
 cannot be zero. According to Theorem \ref{theopoisson} we know that the eigenvalue satisfies $\Lambda < \mu j_{1,1}^2$, therefore
 $\omega_2=\sqrt{\frac{\Lambda}{\mu}}<j_{1,1}$ and then $J_k(\omega_2)>0$ (for $k\geq 1$, the first zero of $J_k$  is always
 greater or equal to $j_{1,1}$). It remains to consider the quantity $(\lambda+2\mu)k^2\omega_1^2J_k^2(\omega_1)
 -\mu \omega_2^4  {J'_k}^2(\omega_1)$. Using the expression of $\omega_1,\omega_2$ and up to the factor $\omega^2$, it is equal to
 $$Q=k^2 J_k^2(a_1\omega)-\frac{\omega^2}{\mu} {J'_k}^2(a_1\omega).$$
 Since $a_1<a_2$, we know that $J_k(a_1 \omega)>0$. Therefore, $Q=0$ means
 \begin{equation}\label{derj1}
 kJ_k(a_1\omega)-a_2\omega J'_k(a_1\omega) = 0 \quad \mbox{or} \quad kJ_k(a_1\omega)+a_2\omega J'_k(a_1\omega) = 0 .
 \end{equation}
 Let us analyze the first case. From the transcendental equation, we see that
 $$kJ_k(a_1\omega)=a_2\omega J'_k(a_1\omega)  \Rightarrow kJ_k(a_2\omega)=a_1\omega J'_k(a_2\omega) .$$
 Rewriting this in term of the function $\psi_k$ already introduced, this means
 $$\psi_k(a_2 \omega) = \frac{k a_2}{a_1}$$
 but since $ka_2/a_1 > k$ and $\psi_k(x) \leq k$ in this range we see that it is impossible.
 
 Now in the other case,  in the same way thanks to the transcendental equation, we get
 \begin{equation}\label{psiw}
 \psi_k(a_2 \omega) =- \frac{k a_2}{a_1}.
 \end{equation}
 When $k\geq 3$ this is impossible since then $J_k(a_2\omega)$ and $J'_k(a_2\omega)$ are both positive (we recall that
 we are in the case where $\omega \leq \sqrt{\mu}j_{1,1} \Rightarrow a_2 \omega \leq j_{1,1}$). It remains the case $k=2$.
 In that case, due to the fact that $\psi_2$ is decreasing, we infer $\psi_2(a_2 \omega)\geq \psi_2(j_{1,1})$.
 But since $j_{1,1} J'_2(j_{1,1}) = -2 J_2(j_{1,1} )$ this would imply with \eqref{psiw}
 $$-\frac{2 a_2}{a_1} \geq -2 \Rightarrow a_2 \leq a_1$$
 a contradiction since we know that $a_2> a_1$.
 
This finishes the proof
 of non optimality of the disk in that case.
 \end{proof}
  \subsection{Optimality of the disk: second order arguments}\label{seconddisk}
 Let us assume that $\Omega$ is the unit disk $\Omega=\mathbb{B}_2$ and assume that $\Lambda$ is simple.
 We know, according to Theorem \ref{theopoisson} that it is the case as soon as $\nu > \nu^*$ and moreover  $\Lambda(\Omega)=\mu j_{1,1}^2$. 
 We also know, from the proof of Theorem \ref{theopoisson} that the associated eigenspace is spanned by the normalized vector $U=[u_1,u_2]^\top$, reading in polar coordinates $(r,\theta)$ as
 $$
 u_1=-\alpha \sin\theta J_1(j_{1,1}r)\quad \text{and}\quad  u_2=\alpha \cos\theta J_1(j_{1,1}r)
 $$
where $\alpha=\frac{1}{\sqrt \pi |J_0(j_{1,1})|}$.  
Our aim is to prove that in that case, the first order shape derivative of the functional $\mathcal{F}(\Omega):=|\Omega| \Lambda(\Omega)$
is zero (for any vector field $V$) while the second order shape derivative is a positive quadratic form.

A consequence of the general formulae for the second shape derivative given in Proposition~\ref{Pr:DifferentiabilityFormulae} is:
\begin{proposition}\label{cor:LambSecBall}
Assume that $\Omega=\mathbb{B}_2$ is the unit disk in $\R^2$. 
 Assume that the Poisson coefficient $\nu$ satisfies $ \nu> \nu^*$.
Then, the second order derivative of $\Lambda$ at $\Omega=\mathbb{B}_2$ reads
 \begin{eqnarray}\label{Eq:D2Lambda}
   \langle d^2\Lambda(\Omega)V,V\rangle &=& -\mu\int_{\partial \Omega}\frac{\partial^2  u_\Omega }{\partial n^2}\cdot \frac{\partial  u_\Omega }{\partial n}(V\cdot n)^2 \nonumber  \\ 
&& -2\Lambda \int_\Omega |\dot{u}_V|^2+2\mu \int_\Omega |\nabla \dot{u}_V|^2+2(\lambda+\mu)\int_\Omega (\operatorname{div}\dot{u}_V)^2.
\end{eqnarray} 
\end{proposition}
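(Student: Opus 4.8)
The plan is to specialize the general second-order formula \eqref{Eq:D2Lambdabis} of Proposition~\ref{Pr:DifferentiabilityFormulae}, which applies here because the hypothesis $\nu>\nu^*$ guarantees (by Theorem~\ref{theopoisson}) that $\Lambda(\mathbb{B}_2)=\mu j_{1,1}^2$ is simple. The starting observation is that the first eigenfunction of the disk is \emph{divergence free}. Indeed, as computed in Section~\ref{sec;eig_disk}, for $\nu>\nu^*$ it arises from the mode $\psi_1\equiv 0$, $\psi_2=a_{2,0}J_0(j_{1,1}r)$ in the Helmholtz decomposition of Lemma~\ref{lem:helm}, so that $\operatorname{div}u_\Omega=\Delta\psi_1\equiv 0$ in $\mathbb{B}_2$; equivalently, $u_\Omega=\alpha J_1(j_{1,1}r)(-\sin\theta,\cos\theta)^\top$ is a purely angular field whose polar-coordinate divergence $\frac1r\partial_r(ru_r)+\frac1r\partial_\theta u_\theta$ vanishes identically because $u_r\equiv 0$ and $u_\theta$ does not depend on $\theta$. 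In particular $\operatorname{div}u_\Omega=0$ and $\frac{\partial(\operatorname{div}u_\Omega)^2}{\partial n}=0$ on $\partial\mathbb{B}_2$, which makes the whole $(\lambda+\mu)$ boundary line of \eqref{Eq:D2Lambdabis} vanish.

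Next I would simplify the surviving $\mu$ boundary term by using the equation on the boundary. Since $\operatorname{div}u_\Omega\equiv 0$, the Lamé system \eqref{pdeLame1} reduces to $-\mu\Delta u_\Omega=\Lambda u_\Omega$ in $\mathbb{B}_2$. Restricting this to $\partial\mathbb{B}_2$ and inserting the decomposition $\Delta u_i=\Delta_\tau u_i+H\frac{\partial u_i}{\partial n}+\frac{\partial^2 u_i}{\partial n^2}$ already used in the proof of Proposition~\ref{Pr:DifferentiabilityFormulae}, together with the facts that $u_i\equiv 0$ on $\partial\mathbb{B}_2$ forces both $u_i=0$ and $\Delta_\tau u_i=0$ there, one gets $H\frac{\partial u_i}{\partial n}+\frac{\partial^2 u_i}{\partial n^2}=0$ on $\partial\mathbb{B}_2$, i.e. $\frac{\partial^2 u_\Omega}{\partial n^2}\cdot\frac{\partial u_\Omega}{\partial n}=-H\big|\frac{\partial u_\Omega}{\partial n}\big|^2$. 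Plugging this identity into the integrand $H\big|\frac{\partial u_\Omega}{\partial n}\big|^2+2\frac{\partial^2 u_\Omega}{\partial n^2}\cdot\frac{\partial u_\Omega}{\partial n}$ of \eqref{Eq:D2Lambdabis} collapses it to $\frac{\partial^2 u_\Omega}{\partial n^2}\cdot\frac{\partial u_\Omega}{\partial n}$, which is precisely the boundary term displayed in \eqref{Eq:D2Lambda} (and, since $H=1$ on the unit circle, it equals as well $\mu\int_{\partial\mathbb{B}_2}\big|\frac{\partial u_\Omega}{\partial n}\big|^2(V\cdot n)^2$). The three volume terms $-2\Lambda\int_\Omega|\dot u_V|^2+2\mu\int_\Omega|\nabla\dot u_V|^2+2(\lambda+\mu)\int_\Omega(\operatorname{div}\dot u_V)^2$ are carried over unchanged, and assembling the pieces gives \eqref{Eq:D2Lambda}.

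The computation is essentially algebraic once the divergence-free property is in hand; the only points needing care are bookkeeping ones. One should check that the analyticity of $\mathbb{B}_2$ and the consequent smoothness of $u_\Omega$ up to $\partial\mathbb{B}_2$ legitimately allow restricting the PDE and its second normal derivative to the boundary, and that the mean-curvature sign convention of Proposition~\ref{Pr:DifferentiabilityFormulae} is the one under which $H\frac{\partial u_i}{\partial n}+\frac{\partial^2 u_i}{\partial n^2}=0$ holds; since the numerical value of $H$ is never used, only the identity just derived, this is harmless. The main obstacle, such as it is, is therefore merely recognizing and justifying that the disk's first eigenfunction is solenoidal — everything else follows mechanically from the Dirichlet condition and \eqref{Eq:D2Lambdabis}.
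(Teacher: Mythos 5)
Your argument is correct and follows essentially the same route as the paper: you use that the disk's first eigenfunction is divergence free (which kills the $(\lambda+\mu)$ boundary terms of \eqref{Eq:D2Lambdabis}), and then restrict the reduced equation $-\mu\Delta u_\Omega=\Lambda u_\Omega$ to $\partial\mathbb{B}_2$, where $u_\Omega=0$ and $\Delta_{\partial\Omega}u_\Omega=0$ give $\frac{\partial^2 u_\Omega}{\partial n^2}\cdot\frac{\partial u_\Omega}{\partial n}=-H\left|\frac{\partial u_\Omega}{\partial n}\right|^2$, collapsing $H\left|\frac{\partial u_\Omega}{\partial n}\right|^2+2\frac{\partial^2 u_\Omega}{\partial n^2}\cdot\frac{\partial u_\Omega}{\partial n}$ to $\frac{\partial^2 u_\Omega}{\partial n^2}\cdot\frac{\partial u_\Omega}{\partial n}$ exactly as in the paper. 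Your explicit verification that $\operatorname{div}u_\Omega\equiv 0$ (via the Helmholtz mode $\psi_1\equiv 0$, or the explicit polar form of $u_\Omega$) is a welcome detail the paper leaves implicit.
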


\begin{proof}
This follows by observing that, in such a case, 
\begin{itemize}
\item one has $\operatorname{div}u_\Omega=0$ in $\Omega$ ; 
\item furthermore, using the standard decomposition of the Laplacian on $\partial\Omega$ yields
$$
0=-\Lambda(\Omega)u_\Omega-(\lambda+\mu)\nabla \operatorname{div}u_\Omega=\mu \Delta u_\Omega=\frac{\partial^2  u_\Omega }{\partial n^2}+H  \frac{\partial  u_\Omega}{\partial n}+\Delta_{\partial\Omega} u_\Omega,
$$
where $\Delta_{\partial\Omega}$ stands for the Laplace-Beltrami tangential operator on $\partial\Omega$. It follows that $\Delta_{\partial\Omega} u_\Omega=0$ on $\partial\Omega$, and thus,
$$
H \left| \frac{\partial  u_\Omega}{\partial n}\right|^2+2\frac{\partial^2  u_\Omega }{\partial n^2}\cdot \frac{\partial  u_\Omega }{\partial n}=\frac{\partial^2  u_\Omega }{\partial n^2}\cdot \frac{\partial  u_\Omega }{\partial n}\quad \text{on }\partial\Omega .
$$
\end{itemize}
\end{proof}
Our first task is to compute explicitly the second derivative of $\Lambda$.
According to Proposition~\ref{cor:LambSecBall}, one has
 $$
    \langle d^2\Lambda(\Omega)V,V\rangle = -\mu A_1-2\Lambda \int_\Omega |\dot{u}_V|^2+ A_2+2(\lambda+\mu)A_3,
 $$
 where
 \begin{eqnarray*}
 A_1 &=& \int_{\partial \Omega}\frac{\partial^2  u_\Omega }{\partial n^2}\cdot \frac{\partial  u_\Omega }{\partial n}(V\cdot n)^2\\
 A_2 &=& 2\mu\int_\Omega |\nabla \dot{u}_V|^2\\
 A_3 &=& \int_\Omega (\operatorname{div}\dot{u}_V)^2.
 \end{eqnarray*}
 Let us compute each term separately. To this aim, it is convenient to denote by $\varphi$ the function $V\cdot n$, defined on the boundary of $\mathbb{B}_2$, expanding in the $\theta$-coordinate as
 \begin{equation}\label{expand:varphiFourier}
 \varphi(\theta)=\sum_{k=0}^{+\infty} \alpha_k\cos (k\theta)+\beta_k\sin (k\theta).
 \end{equation}
% Note that the constant term in this Fourier series is chosen to be zero because of the constraint volume, implying in particular that admissible perturbations have to satisfy 
% $$
% \int_{0}^{2\pi}\varphi(\theta)\, d\theta = \int_{\partial\mathbb{B}_2}(V\cdot n)=0.
% $$
% \textcolor{red}{*** a rediscuter car on va plutot considerer une fonctionnelle homogene ****}
 \paragraph{A simplified expression of $    \langle d^2\Lambda(\Omega)V,V\rangle $.} 
 According to the Green formula and Proposition~\ref{Pb:der}, one has
  \begin{eqnarray*}
 A_2&=&2\mu \int_{\partial\Omega}\dot{u}_V\cdot \frac{\partial \dot{u}_V}{\partial n}-2\mu\int_{\Omega}\dot{u}_V\cdot \Delta \dot{u}_V\\
 &=& 2\mu\int_{\partial\Omega}\dot{u}_V\cdot \frac{\partial \dot{u}_V}{\partial n}+2\Lambda(\Omega)\int_\Omega |\dot{u}_V|^2+2(\lambda+\mu)\int_\Omega \dot{u}_V\cdot \nabla \operatorname{div}\dot{u}_V .
  \end{eqnarray*}
 Let us now use the equation satisfied by $\dot{u}_V$. We obtain
  \begin{eqnarray*}
 A_2 &=& 2\mu\int_{\partial\Omega}\dot{u}_V\cdot \frac{\partial \dot{u}_V}{\partial n}+\Lambda(\Omega)\int_\Omega |\dot{u}_V|^2-2(\lambda+\mu)\int_\Omega ( \operatorname{div}\dot{u}_V)^2+2(\lambda+\mu)\int_{\partial \Omega}\operatorname{div}\dot{u}_V(\dot{u}_V\cdot n) .
  \end{eqnarray*}
Note that 
\begin{eqnarray*}
\dot{u}_V\cdot n&=&-\varphi \nabla u_\Omega n \cdot n  = -\varphi \sum_{i,j}\frac{\partial u_{\Omega,i}}{\partial x_j}n_jn_i= -\varphi \sum_i \frac{\partial u_{\Omega,i}}{\partial n}n_i=-\varphi \operatorname{div}u_\Omega=0\quad \text{on }\partial\Omega.
\end{eqnarray*}
We get
   \begin{eqnarray}\label{train0954}
 A_2 &=& 2\mu\int_{\partial\Omega}\dot{u}_V\cdot \frac{\partial \dot{u}_V}{\partial n}+2\Lambda(\Omega)\int_\Omega |\dot{u}_V|^2-2(\lambda+\mu)\int_\Omega ( \operatorname{div}\dot{u}_V)^2 .
  \end{eqnarray}
As a consequence,
$$
    \langle d^2\Lambda(\Omega)V,V\rangle = -\mu A_1+2\mu\int_{\partial\Omega}\dot{u}_V\cdot \frac{\partial \dot{u}_V}{\partial n}.
 $$
Let us now expand this expression into a sum of squares.
 
 \paragraph{Computation of $A_1$.} One has
 $$
 \frac{\partial^2  u_\Omega }{\partial n^2}\cdot \frac{\partial  u_\Omega }{\partial n}=\alpha^2j_{1,1}^3J_1'(j_{1,1})J_1''(j_{1,1})=-\alpha^2j_{1,1}^2J_0(j_{1,1})^2,
 $$
 by noting that $J_1'(j_{1,1})=J_0(j_{1,1})$ and $j_{1,1}^2J_1''(j_{1,1})=-j_{1,1}J_1'(j_{1,1})$. It follows that
 $$
 A_1=\mu\alpha^2 j_{1,1}^2J_0(j_{1,1})^2\int_{\partial\Omega}(V\cdot n)^2 = 
\Lambda\left(2\alpha_0^2+\sum_{k=1}^{+\infty}(\alpha_k^2+\beta_k^2)\right) .
 $$

 \paragraph{Computation of $\boldsymbol{I:=\int_{\partial\Omega}\dot{u}_V\cdot \frac{\partial \dot{u}_V}{\partial n}}$.} 
Recall that $\dot{u}_V$ satisfies
\begin{equation}\label{dotuBoundary}
\dot{u}_V=\alpha j_{1,1}J_0(j_{1,1})\varphi(\theta)[\sin\theta,-\cos \theta]^\top\quad \text{on }\partial\Omega.
\end{equation}
To compute $\dot{u}_V$ inside the domain $\Omega$, we will use Lemma ~\ref{lem:helm} with $v=\dot{u}_V$ and $f=d\Lambda(\Omega,V) u_\Omega$.
According to formulae \eqref{Pr:DifferentiabilityFormulaeBis} and since $\mathrm{div} u_\Omega=0$, we finally obtain for the unit disk
\begin{equation}\label{firstderdisk}
d\Lambda(\Omega,V)=-2\mu j_{1,1}^2 \alpha_0= -2\Lambda(\Omega) \alpha_0.
\end{equation}
Since the first derivative of the area is $dA(\Omega,V)=\int_{\partial \Omega} V\cdot n=2\pi \alpha_0$, we recover the fact that
the first derivative of the functional $\mathcal{F}$ is zero at the disk (in other terms, the disk is a critical point).

We now use the decomposition of Lemma~\ref{lem:helm} on the unit circle taking profit that $u_\Omega$ vanishes on the boundary.
Therefore, by writing $\psi_1$ and $\psi_2$ in the polar coordinates $(r,\theta)$, one has on the boundary
$$
\left\{\begin{array}{l}
\dot{u}_{V,1}=\partial_x\psi_1-\partial_y\psi_2=\cos\theta \left(\partial_r\psi_1-\frac{1}{r}\partial_\theta\psi_2\right)-\sin\theta \left(\frac{1}{r}\partial_\theta\psi_1+\partial_r\psi_2\right)\\
\dot{u}_{V,2}=\partial_y\psi_1+\partial_x\psi_2=\sin\theta \left(\partial_r\psi_1-\frac{1}{r}\partial_\theta\psi_2\right)+\cos\theta \left(\frac{1}{r}\partial_\theta\psi_1+\partial_r\psi_2\right) .
\end{array}
\right.
$$
By using \eqref{dotuBoundary}, we infer that
\begin{equation}\label{eq0953}
\partial_r\psi_1-\partial_\theta\psi_2=0\quad \text{and}\quad \partial_\theta\psi_1+\partial_r\psi_2=-\frac{j_{1,1}}{\sqrt \pi}\varphi(\theta).
\end{equation}
For the sake of notational clarity, let us introduce 
$$
\omega=\sqrt{\frac{\mu}{\lambda+2\mu}}.
$$
According to Lemma~\ref{lem:helm}, the functions $\psi_1$ and $\psi_2$ solve the PDEs
$$
-\Delta \psi_1=\omega^2 j_{1,1}^2\psi_1\quad \text{and}\quad -\Delta \psi_2=j_{1,1}^2\psi_2\quad \text{in }\Omega.
$$
We infer that $\psi_1$ and $\psi_2$ expand as
\begin{eqnarray*}
\psi_1 &=& a_{1,0}J_0(\omega j_{1,1}r)+\sum_{k=1}^{+\infty}\left(a_{1,k}\cos (k\theta)+b_{1,k}\sin(k\theta)
\right)J_k(\omega j_{1,1}r)\\
\psi_2 &=& a_{2,0}J_0( j_{1,1}r)+\sum_{k=1}^{+\infty}\left(a_{2,k}\cos (k\theta)+b_{2,k}\sin(k\theta)
\right)J_k( j_{1,1}r) .
\end{eqnarray*}
Plugging these expressions into \eqref{eq0953} allows us to compute the Fourier coefficients characterizing $\psi_1$ and $\psi_2$:
$$
\left\{\begin{array}{l}
a_{1,0}=0, \quad b_{1,0}\text{ is arbitrary}\\
a_{1,k}j_{1,1}\omega J_k'(j_{1,1}\omega)-jJ_1(j_{1,1})b_{2,k}=0, \qquad k\geq 1\\
-ka_{1,k}J_k(j_{1,1}\omega)+j_{1,1}J_k'(j_{1,1})b_{2,k}=-\frac{j_{1,1}}{\sqrt \pi}\beta_k\\
kb_{1,k}J_j(j_{1,1}\omega)+j_{1,1}J_k'(j_{1,1})a_{2,k}=-\frac{j_{1,1}}{\sqrt \pi}\alpha_k\\
b_{1,k}j_{1,1}\omega J_k'(j_{1,1}\omega)+kJ_k(j_{1,1})a_{2,k}=0
\end{array}\right.
$$
which, after easy computations,  reduces into
\begin{eqnarray*}
a_{1,k} & =& -\frac{kj_{1,1}J_k(j_{1,1})}{\sqrt \pi (j_{1,1}^2\omega J_{k}'(\omega j_{1,1})J_k'(j_{1,1})-k^2J_k(j_{1,1})J_k(\omega j_{1,1}))}\beta_k\\
a_{2,k} & =& \frac{kj_{1,1}J_k'(j_{1,1})}{\sqrt \pi (j_{1,1}^2\omega J_{k}'(\omega j_{1,1})J_k'(j_{1,1})-k^2J_k(j_{1,1})J_k(\omega j_{1,1}))}\alpha_k\\
b_{1,k} & =& -\frac{\omega j_{1,1}^2J_k(j_{1,1}\omega)}{\sqrt \pi (j_{1,1}^2\omega J_{k}'(\omega j_{1,1})J_k'(j_{1,1})-k^2J_k(j_{1,1})J_k(\omega j_{1,1}))}\alpha_k\\
b_{2,k} & =& -\frac{\omega j_{1,1}^2J_k'(j_{1,1}\omega)}{\sqrt \pi (j_{1,1}^2\omega J_{k}'(\omega j_{1,1})J_k'(j_{1,1})-k^2J_k(j_{1,1})J_k(\omega j_{1,1}))}\beta_k .\\
\end{eqnarray*}
We know the explicit expression of $\psi_1$ and $\psi_2$. 
We are now in position to compute $I$. One has
\begin{eqnarray*}
I &=& \int_0^{2\pi}\left(\dot{u}_{V,1}\frac{\partial \dot{u}_{V,1}}{\partial r}+\dot{u}_{V,2}\frac{\partial \dot{u}_{V,2}}{\partial r}\right)\, d\theta \\
&=& \frac{j_{1,1}}{\sqrt \pi}\int_0^{2\pi}\left(\frac{\partial\psi_1}{\partial\theta}-\left(\frac{\partial^2\psi_1}{\partial r\partial\theta}+\frac{\partial^2\psi_2}{\partial r^2}\right)\right)\varphi(\theta)\, d\theta +(2\alpha_0)^2 \int_0^{2\pi} u_\Omega . \frac{\partial u_\Omega}{\partial r}\,d\theta.
\end{eqnarray*}
Note that 
\begin{eqnarray*}
\frac{\partial^2\psi_1}{\partial r\partial\theta}&=& \omega j_{1,1}\sum_{k=1}^{+\infty}kJ_k'(\omega j_{1,1})\left(b_{1,k}\cos (k\theta)-a_{1,k}\sin (k\theta)\right)\\
\frac{\partial^2\psi_2}{\partial r^2}&=& j_{1,1}^2\sum_{k=1}^{+\infty} J_k''(j_{1,1})\left(a_{2,k}\cos (k\theta)+b_{2,k}\sin (k\theta)\right)+ j_{1,1}^2a_{2,0}J_1''( j_{1,1})
\end{eqnarray*}
on $\partial\Omega$. Now, using
$$
 j_{1,1}^2J_k''( j_{1,1})=- j_{1,1}J_k'( j_{1,1})+(k^2- j_{1,1}^2)J_k( j_{1,1}),
$$
it follows from easy, but lengthly computations that
\begin{eqnarray*}
-\frac{j_{1,1}}{\sqrt \pi}\int_0^{2\pi}\left(\frac{\partial^2\psi_1}{\partial r\partial\theta}+\frac{\partial^2\psi_2}{\partial r^2}\right)\varphi(\theta)\, d\theta &=& -\omega j_{1,1}^3\sum_{k=1}^{+\infty}\frac{k^2J_k(j_{1,1})J_k'(\omega j_{1,1})(\alpha_k^2+\beta_k^2)}{j_{1,1}^2\omega J_{k}'(\omega j_{1,1})J_k'(j_{1,1})-k^2J_k(j_{1,1})J_k(\omega j_{1,1})}\\
&&  +\omega j_{1,1}^3\sum_{k=1}^{+\infty}\frac{(-j_{1,1}J_k'(j_{1,1})+(k^2-j_{1,1}^2)J_k(j_{1,1}))J_k'(\omega j_{1,1})(\alpha_k^2+\beta_k^2)}{j_{1,1}^2\omega J_{k}'(\omega j_{1,1})J_k'(j_{1,1})-k^2J_k(j_{1,1})J_k(\omega j_{1,1})}\\
&=& -\omega j_{1,1}^4\sum_{k=1}^{+\infty}\frac{(J_k'(j_{1,1})+j_{1,1}J_k(j_{1,1}))J_k'(\omega j_{1,1})(\alpha_k^2+\beta_k^2)}{j_{1,1}^2\omega J_{k}'(\omega j_{1,1})J_k'(j_{1,1})-k^2J_k(j_{1,1})J_k(\omega j_{1,1})} .
\end{eqnarray*}
Similarly, since 
$$
\frac{\partial\psi_1}{\partial \theta}=\sum_{k=1}^{+\infty}kJ_k(\omega j_{1,1})\left(b_{1,k}\cos (k\theta)-a_{1,k}\sin (k\theta)\right)\qquad \text{on }\partial \Omega,
$$
it follows that
\begin{eqnarray*}
\frac{j_{1,1}}{\sqrt \pi}\int_0^{2\pi} \frac{\partial\psi_1}{\partial\theta}\, d\theta &=& - j_{1,1}^2\sum_{k=1}^{+\infty}\frac{k^2J_k(\omega j_{1,1})J_k(j_{1,1})(\alpha_k^2+\beta_k^2)}{j_{1,1}^2\omega J_{k}'(\omega j_{1,1})J_k'(j_{1,1})-k^2J_k(j_{1,1})J_k(\omega j_{1,1})} .
\end{eqnarray*}
\paragraph{Conclusion.} Finally, we obtain the following expression of $ \langle d^2\Lambda(\Omega)V,V\rangle$ by combining all the results above:
$$
 \langle d^2\Lambda(\Omega)V,V\rangle=\mu j_{1,1}^2\left(6\alpha_0^2+\sum_{k=1}^{+\infty}c_k(\alpha_k^2+\beta_k^2)\right),
$$
where 
$$
c_k=\frac{k^2J_k(\omega j_{1,1})J_k(j_{1,1})-\omega j_{1,1}^2J_k'(j_{1,1})J_k'(\omega j_{1,1})-2k \omega j_{1,1}^2 J_k(j_{1,1})J_k'(\omega j_{1,1})}{j_{1,1}^2\omega J_{k}'(\omega j_{1,1})J_k'(j_{1,1})-k^2J_k(j_{1,1})J_k(\omega j_{1,1})}.
$$
These computations allow us to state:
\begin{theorem}
Let $\mathcal{F}$ the shape functional defined by $\mathcal{F}(\Omega)=|\Omega| \Lambda(\Omega)$ and $\Omega$ be the unit disk.
Then $d\mathcal{F}(\Omega,V)=0$ and
\begin{equation}\label{der2F}
\langle d^2 \mathcal{F}(\Omega),V,V\rangle \geq A_0 \|\hat{V}\|_{H^1(\partial\Omega)}^2,
\end{equation}
where $\hat{V}$ denotes the projection of $V\cdot n$ on the orthogonal space to $span\{1,\cos\theta,\sin\theta\}$.
Therefore the unit disk is a local minimum in a weak sense.
\end{theorem}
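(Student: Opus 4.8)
The plan is to use the Leibniz rule for the (first and second) shape derivatives of the product $\mathcal{F}=|\cdot|\,\Lambda(\cdot)$, together with the two ingredients already at hand: the formula $\langle d\Lambda(\Omega),V\rangle=-2\mu j_{1,1}^2\alpha_0$ from \eqref{firstderdisk} and the explicit expansion of $\langle d^2\Lambda(\Omega)V,V\rangle$ computed just above (via Proposition~\ref{cor:LambSecBall}). For the first order, write $A(\Omega):=|\Omega|$, so that $\langle d\mathcal{F}(\Omega),V\rangle=\Lambda(\Omega)\langle dA(\Omega),V\rangle+|\Omega|\langle d\Lambda(\Omega),V\rangle$; for the unit disk $|\Omega|=\pi$, $\Lambda(\Omega)=\mu j_{1,1}^2$, $\langle dA(\Omega),V\rangle=\int_{\partial\Omega}V\cdot n=2\pi\alpha_0$, and one gets $\langle d\mathcal{F}(\Omega),V\rangle=\mu j_{1,1}^2(2\pi\alpha_0)+\pi(-2\mu j_{1,1}^2\alpha_0)=0$. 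Thus the disk is a critical point of $\mathcal{F}$, which in particular makes $\langle d^2\mathcal{F}(\Omega)V,V\rangle$ depend on $V$ only through $\varphi:=V\cdot n$ on $\partial\Omega$.

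Second, assemble $\langle d^2\mathcal{F}(\Omega)V,V\rangle=\Lambda(\Omega)\langle d^2A(\Omega)V,V\rangle+2\langle dA(\Omega),V\rangle\langle d\Lambda(\Omega),V\rangle+|\Omega|\langle d^2\Lambda(\Omega)V,V\rangle$. For a normal deformation of the unit disk the second variation of the area is $\langle d^2A(\Omega)V,V\rangle=\int_{\partial\Omega}H\varphi^2=\int_0^{2\pi}\varphi^2\,d\theta=2\pi\alpha_0^2+\pi\sum_{k\geq1}(\alpha_k^2+\beta_k^2)$, since $H=1$ on the unit circle. Plugging this, \eqref{firstderdisk} and the final expression for $\langle d^2\Lambda(\Omega)V,V\rangle$ into the Leibniz identity, I expect the $\alpha_0^2$ coefficient to cancel identically ($2-8+6=0$), which merely reflects the scale invariance $\mathcal{F}(t\Omega)=\mathcal{F}(\Omega)$, leaving
$$
\langle d^2\mathcal{F}(\Omega)V,V\rangle=\pi\mu j_{1,1}^2\sum_{k=1}^{+\infty}(1+c_k)(\alpha_k^2+\beta_k^2),
$$
with $c_k$ the coefficients computed above. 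The $k=1$ term vanishes: setting $k=1$ in the definition of $c_k$ and using $J_1(j_{1,1})=0$ forces $c_1=-1$ (equivalently, this is the translation invariance of $\mathcal{F}$), which is exactly why only the modes $k\geq2$, i.e.\ the projection $\hat V$ of $V\cdot n$ onto the orthogonal complement of $\mathrm{span}\{1,\cos\theta,\sin\theta\}$, can be controlled.

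Third, I would analyze $1+c_k$ for $k\geq2$. Writing $\psi_k(x):=xJ_k'(x)/J_k(x)=k-xJ_{k+1}(x)/J_k(x)$ and cancelling the matching terms in the numerator and denominator of $1+c_k$ (denote them $N_k$ and $D_k$; one checks $D_k+N_k=-2k\omega j_{1,1}^2 J_k(j_{1,1})J_k'(\omega j_{1,1})$, the other contributions cancelling in pairs), and dividing through by $J_k(j_{1,1})J_k(\omega j_{1,1})>0$ (legitimate since $0<\omega j_{1,1}<j_{1,1}<j_{k,1}$ for $k\geq2$, with $\omega=\sqrt{\mu/(\lambda+2\mu)}$), one obtains the compact form
$$
1+c_k=\frac{-2k\,j_{1,1}\,\psi_k(\omega j_{1,1})}{\psi_k(\omega j_{1,1})\psi_k(j_{1,1})-k^2}.
$$
Exactly as in the proof of Theorem~\ref{theopoisson}, the monotonicity of $\psi_k$ on $[0,j_{k,1})$ with $\psi_k(0)=k$ gives that the denominator is $<0$; and since $\nu>\nu^*$ implies $\omega j_{1,1}<j'_{1,1}\leq j'_{k,1}$, one has $\psi_k(\omega j_{1,1})\in(0,k)$, so the numerator is $<0$ as well and $1+c_k>0$.

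Finally, the coercive estimate. The goal is $1+c_k\geq\delta(1+k^2)$ for all $k\geq2$ with $\delta>0$ depending only on $\nu$; then
$$
\langle d^2\mathcal{F}(\Omega)V,V\rangle=\pi\mu j_{1,1}^2\sum_{k\geq2}(1+c_k)(\alpha_k^2+\beta_k^2)\geq\pi\mu j_{1,1}^2\,\delta\sum_{k\geq2}(1+k^2)(\alpha_k^2+\beta_k^2)=A_0\|\hat V\|_{H^1(\partial\Omega)}^2,
$$
which together with $d\mathcal{F}(\Omega,V)=0$ is precisely the asserted weak local minimality (positivity of the Hessian modulo the scaling/translation kernel). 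For the growth, I would use the standard two-sided continued-fraction bounds $x/(2(k+1))<J_{k+1}(x)/J_k(x)<x/(2k)$ (valid here for the bounded arguments $x\in\{\omega j_{1,1},j_{1,1}\}$ and $k\geq2$), which yield $k-\tfrac{j_{1,1}^2}{2k}<\psi_k(\omega j_{1,1}),\psi_k(j_{1,1})<k$ and hence $0<k^2-\psi_k(\omega j_{1,1})\psi_k(j_{1,1})<j_{1,1}^2$; consequently $1+c_k>\tfrac{2k\,j_{1,1}(k-j_{1,1}^2/(2k))}{j_{1,1}^2}=\tfrac{2k^2-j_{1,1}^2}{j_{1,1}}$, which grows like $k^2$ for $k$ large, and the finitely many remaining $k$ are covered by the strict positivity of the previous step. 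The main obstacle is precisely this last step: one must control the numerator and denominator of $1+c_k$ \emph{uniformly in the order $k$} — pinning the rate $\psi_k(x)=k+O(1/k)$ and the boundedness of the denominator — which requires invoking quantitative, order-uniform bounds on the Bessel ratios $J_{k+1}/J_k$ rather than the purely qualitative monotonicity of $\psi_k$ used elsewhere in the paper.
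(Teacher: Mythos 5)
Your proposal follows essentially the same route as the paper's proof: Leibniz rule for $\mathcal F=|\cdot|\,\Lambda$, vanishing of the first variation via \eqref{firstderdisk}, cancellation of the $\alpha_0^2$ contributions by scale invariance, Fourier diagonalization of the Hessian with coefficients $1+c_k$ (these coincide with the paper's $C_k$; your identity $N_k+D_k=-2k\omega j_{1,1}^2 J_k(j_{1,1})J_k'(\omega j_{1,1})$ and the compact form in terms of $\psi_k$ are correct), $C_1=0$ from $J_1(j_{1,1})=0$, and positivity for $k\ge 2$ by the same monotonicity of $\psi_k$ as in Theorem \ref{theopoisson}. The only genuine deviation is the quantitative growth step, and there your stated inequality is false as written: the upper bound $J_{k+1}(x)/J_k(x)<x/(2k)$ fails at $k=2$, $x=j_{1,1}$, since the recurrence gives $J_3(j_{1,1})/J_2(j_{1,1})=4/j_{1,1}\approx 1.04>j_{1,1}/4\approx 0.96$; equivalently $\psi_2(j_{1,1})=-2<2-j_{1,1}^2/4$, so the chain $k-j_{1,1}^2/(2k)<\psi_k(j_{1,1})$, and hence $0<k^2-\psi_k(\omega j_{1,1})\psi_k(j_{1,1})<j_{1,1}^2$, breaks at $k=2$. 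This does not sink the argument, because your derived bound $(2k^2-j_{1,1}^2)/j_{1,1}$ is vacuous at $k=2$ anyway and you cover the finitely many low modes by strict positivity; but to make the large-$k$ step rigorous you still need order-uniform control of the Bessel ratios (for instance via $J_{k+1}(x)/J_k(x)=\sum_n 2x/(j_{k,n}^2-x^2)$ together with $\sum_n j_{k,n}^{-2}=1/(4(k+1))$, which yields both bounds for fixed $x$ once $k$ is large) — precisely the obstacle you flag yourself. The paper instead gets the $k(k+1)$ growth of $C_k$ directly from the fixed-argument, large-order Taylor expansions of $J_k$ and $J_k'$, which is the shorter way to finish.
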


\begin{proof}
The fact that the first derivative of $\mathcal{F}$ vanishes at the disk has already been proved. Let us compute the second shape derivative.
Denoting by $A$ the area, we have
$$d^2\mathcal{F}=\Lambda d^2A +2 d\Lambda dA +A d^2\Lambda.$$
Using $dA=\int_{\partial\Omega} \varphi$, $d^2A=\int_{\partial\Omega} H \varphi^2$ where the mean curvature $H$ equals $1$
and $d\Lambda=-2\Lambda \int_{\partial\Omega} \varphi$, we finally get with the above expression of $d^2\Lambda$ the following expansion
for the second derivative
$$
 \langle d^2\mathcal{F}(\Omega)V,V\rangle=\pi \Lambda \sum_{k=1}^{+\infty}C_k(\alpha_k^2+\beta_k^2)
$$
where $\alpha_k$ and $\beta_k$ are the coefficients in the expansion \eqref{expand:varphiFourier} of $\varphi$ and
$$
C_k=2j_{1,1}^2 \omega \frac{kJ'_k(\omega j_{1,1})J_k(j_{1,1})}{k^2J_k(\omega j_{1,1})J_k(j_{1,1})-j_{1,1}^2\omega J'_k(\omega j_{1,1})J'_k(j_{1,1})}
$$
with $\omega=\sqrt{\mu/(\lambda+2\mu)}<1$. 
We remark that no terms come from $k=0$ and $k=1$ ($C_1=0$). This is due to the invariance of the functional $\mathcal{F}$
under dilation and rotation.
We claim that  each $C_k$ is positive for $k\geq 2$.  Indeed, we have already seen in the proof of Theorem \ref{theopoisson}
that the denominator of $C_k$ is positive.  The first term in the numerator is also positive since, for $k\geq 2$, $J_k(j_{1,1}) >0$.
For the second term we need to be more precise. Since we are in the case where the first eigenvalue of the disk is $\mu j_{1,1}^2$
we know that $\nu >\nu^*$. Now,
$$\omega=\sqrt{\frac{\mu}{\lambda+2\mu}}=\sqrt{\frac{1-2\nu}{2-2\nu}} < \sqrt{\frac{1-2\nu^*}{2-2\nu^*}}= \frac{j'_{1,1}}{j_{1,1}} <\frac{1}{2}.$$
Therefore $\omega j_{1,1} <  j_{1,1}/2 < j'_{k,1}$ for any $k\geq 2$ and $J'_k(\omega j_{1,1})>0$.

To conclude the proof, we look at the asymptotic behaviour of $C_k$ for $k$ large. When $x$ is fixed, we have for $k\,$ large
$$J_k(x)\sim \frac{x^k}{2^k k!} -\frac{x^{k+2}}{2^{k+2} (k+1)!} \quad \mbox{ and } \quad J'_k(x)\sim \frac{x^{k-1}}{2^k (k-1)!}-\frac{(k+2)x^{k+1}}{2^{k+2} (k+1)!}  .$$
Then the numerator $N_k$ of $C_k$  satisfies
$$N_k\sim \frac{ j_{1,1}^{2k+1}\omega^k}{2^{2k-1}[(k-1)!]^2}$$
while the denominator $D_k$ of $C_k$ satisfies
$$D_k\sim  \frac{ j_{1,1}^{2k+2}\omega^k}{2^{2k+2}(k-1)!(k+1)!}\left(2(1+\omega^2)-\frac{\omega^2j_{1,1}^2}{k}\right)\quad \text{as }k\to +\infty.$$
Finally
$$C_k\sim \frac{4k(k+1)}{j_{1,1}\left(2(1+\omega^2)-\frac{\omega^2j_{1,1}^2}{k}\right)} \geq k(k+1)\quad \text{as }k\to +\infty.$$
The conclusion follows since the $H^1$ norm of the projection of $\varphi$  on the orthogonal space to $\operatorname{span}\{1,\cos\theta,\sin\theta\}$ is
$$\|\varphi\|_{H^1}^2=\sum_{k+2}^{+\infty} (k^2+1) (\alpha_k^2+\beta_k^2).$$
\end{proof}

  \section{Some particular domains}
The aim of this section is to find (simple) domains which may have a lower first eigenvalue than the disk, at least when $\nu\geq \nu^*$.
For that purpose, we will give first explicit examples for which we can give the exact value of $\Lambda$. Let us mention that these examples
are very similar to the ones found by Kawohl-Sweers in \cite{kawohl-sweers}.
Then we will consider the case of rectangles. In that case, we are not able to give the exact value of $\Lambda$ but we can estimate it from
above with a good precision.
  \subsection{Rhombi}\label{secrhombi}
In this section, we  discuss the following question: does there exist some domain in the plane for which the
 eigenvector is given by twice the same function, i.e. $U(x,y)=(u(x,y),u(x,y))$.
 As we will see, this is possible and we can even, in that case, find an explicit eigenvector and an explicit eigenvalue.
More precisely we will find some parallelograms, actually rhombi, (depending on the Lam\'e coefficients $\lambda,\mu$) fulfilling this condition
 and the associated eigenvalue will be quite simple and only depend on the area of the parallelogram.
 
We work by analysis and synthesis.
 
 \paragraph{Analysis.}
 Let us assume that the domain $\Omega \subset \mathbb{R}^2$ has the property that its eigenvector is given by $U=(u(x,y),u(x,y))$.
 Thus $\mathrm{div}(U)=\frac{\partial u}{\partial x} + \frac{\partial u}{\partial y}$. We replace in the eigenvector equation
 \eqref{pdeLame1} and we make the difference of the two equations to obtain
 \begin{equation}
 \frac{\partial }{\partial x} \mathrm{div} U - \frac{\partial }{\partial y} \mathrm{div} U =0 .
 \end{equation}
 Therefore (locally, but then globally by analyticity), we have
 \begin{equation}\label{transp}
 \mathrm{div} U = \frac{\partial u}{\partial x} + \frac{\partial u}{\partial y} =  f(x+y)
 \end{equation}
  for some analytic function $f$. Solving this transport equation \eqref{transp} provides the existence of two analytic functions
  $\varphi$ and $\psi$ such that finally
\begin{equation}\label{defu}
 u(x,y)= \varphi(x-y) + \psi(x+y).
 \end{equation} 
 Now we come back to the system \eqref{pdeLame1}: we have $\Delta u=2 (\varphi^{\prime\prime}(x-y) +\psi^{\prime\prime}(x+y) )$
 and $ \mathrm{div} U = 2 \psi^\prime (x+y)$. Therefore, using the change of variable $v=x-y, w=x+y$ we see that $\varphi$ and $\psi$
 must satisfy
 $$-2\mu (\varphi^{\prime\prime}(v) +\psi^{\prime\prime}(w) ) - 2(\lambda+\mu) \psi^{\prime\prime}(w) = \Lambda (\varphi(v) + \psi(w)).$$
 In this equation, we can separate variables to get the existence of some constant $C$ such that
 $$-2(\lambda+2\mu) \psi^{\prime\prime}(w) - \Lambda \psi(w) = C = 2\mu (\varphi^{\prime\prime}(v) + \Lambda \varphi(v)).$$
 Solving this equation separately in $\psi$ and $\varphi$ yields
 \begin{equation}\label{eqpsi}
 \psi(w)=A_1\cos \omega_1 w + B_1\sin \omega_1 w  -\frac{C}{\Lambda} \quad\mbox{ with } \omega_1^2=\frac{\Lambda}{2\lambda+4\mu}
 \end{equation}
 and
\begin{equation}\label{eqphi}
 \varphi(v)=-A_2\cos \omega_2 v - B_2\sin \omega_2 v  +\frac{C}{\Lambda} \quad\mbox{ with } \omega_2^2=\frac{\Lambda}{2\mu} .
 \end{equation}
 Adding \eqref{eqphi} and \eqref{eqpsi}, we get by \eqref{defu}
 $$
 u(x,y)=u(v,w)= A_1\cos \omega_1 w + B_1\sin \omega_1 w -A_2\cos \omega_2 v - B_2\sin \omega_2 v
 $$
 that can also be rewritten as 
  \begin{equation}\label{formulau}
 u(v,w)=C_1 \sin(\omega_1 w-\theta_1) - C_2 \sin(\omega_2 v-\theta_2).
 \end{equation}
 With this expression of $u$ we have completely taken into account the eigen-equation. It just remain to express the Dirichlet
 boundary condition. In other words, domains $\Omega$ that will satisfy the property (that the eigenvector is of the kind $(u,u)$) are
 those domains on which a function $u(v,w)$ given by \eqref{formulau} vanishes on the boundary of $\Omega$.

 \paragraph{Synthesis.}
We will prove below that necessarily $C_1=C_2$ in the expression \eqref{formulau}. So let us assume that $C_1=C_2$ and let us 
investigate the set of points where $u$ vanishes. In that case we have to solve
 $\sin(\omega_1 w-\theta_1) = \sin(\omega_2 v-\theta_2)$, therefore, coming back to the variables $x,y$:
 $$u=0 \Leftrightarrow \left\lbrace
 \begin{array}{l}
 \omega_1(x+y)-\omega_2(x-y)=\theta_1-\theta_2 + 2k\pi, \;k\in \mathbb{Z} \\
  \omega_1(x+y)+\omega_2(x-y)=\theta_1+\theta_2 + (2k'+1) \pi, \;k'\in \mathbb{Z} \\
 \end{array}\right.$$
 or, it can also be written using the definition of $\omega_1,\omega_2$ and introducing the real numbers $a_1=\theta_1-\theta_2$
 and $a_2=\theta_1+\theta_2$
\begin{equation}\label{charp}
 \left\lbrace
 \begin{array}{l}
 \left(\frac{1}{\sqrt{\lambda+2\mu}}-\frac{1}{\sqrt{\mu}}\right) x+ \left(\frac{1}{\sqrt{\lambda+2\mu}}+\frac{1}{\sqrt{\mu}}\right) y =
 \sqrt{\frac{2}{\Lambda}}(a_1+ 2k\pi), \;k\in \mathbb{Z} \\
 \left(\frac{1}{\sqrt{\lambda+2\mu}}+\frac{1}{\sqrt{\mu}}\right) x+ \left(\frac{1}{\sqrt{\lambda+2\mu}}-\frac{1}{\sqrt{\mu}}\right) y =
 \sqrt{\frac{2}{\Lambda}}(a_2+ + (2k'+1) \pi), \;k'\in \mathbb{Z} .\\
 \end{array}\right.
 \end{equation} 
 This corresponds to equations of line segments with two specific normal vectors. 
Therefore, the domain $\Omega$ should be a parallelogram delimited by such parallel line segments. But we have to make more precise what
line segments. 
 To simplify the notations, let us introduce
 $$\alpha=\frac{1}{\sqrt{\lambda+2\mu}}-\frac{1}{\sqrt{\mu}} , \quad \beta=\frac{1}{\sqrt{\lambda+2\mu}}+\frac{1}{\sqrt{\mu}}$$
 and the normal vectors
 $$\mathbf{e_1}=\left(\begin{array}{c}
 \alpha \\
 \beta
 \end{array}\right)\qquad  \mathbf{e_2}=\left(\begin{array}{c}
 \beta \\
 \alpha
 \end{array}\right).$$
 Let us assume that the parallelogram is defined by the four equations
 $$\left\lbrace
 \begin{array}{l}
  \mathbf{e_1}\cdot X=\xi_1 \\
  \mathbf{e_1}\cdot X=\hat{\xi}_1
  \end{array} \right. \qquad 
  \left\lbrace
 \begin{array}{l}
  \mathbf{e_2}\cdot X=\xi_2 \\
  \mathbf{e_2}\cdot X=\hat{\xi}_2 .
  \end{array} \right. $$
According to \eqref{charp}, we must have
$$\xi_1= \sqrt{\frac{2}{\Lambda}}(a_1+ 2k\pi) \quad \hat{\xi}_1= \sqrt{\frac{2}{\Lambda}}(a_1+ 2\hat{k}\pi)$$
therefore
$\hat{\xi}_1-\xi_1=  \sqrt{\frac{2}{\Lambda}} 2m\pi$ for some integer $m$ that cannot be zero. Let us take the smallest possible value
$m=1$ (or $m=-1$). This shows that
\begin{equation}
\hat{\xi}_1-\xi_1=  \sqrt{\frac{2}{\Lambda}} 2\pi .
\end{equation}
Exactly in the same way, we get
\begin{equation}\label{hatxi}
\hat{\xi}_2-\xi_2= \sqrt{\frac{2}{\Lambda}} 2\pi .
\end{equation}
In particular we see that the parallelogram must satisfy $\hat{\xi}_1-\xi_1=\hat{\xi}_2-\xi_2$ and therefore, it is a rhombus.
We are going to give a simple relation between the area of the parallelogram and the eigenvalue $\Lambda$.
Assume that the parallelogram has vertices $A,B,C,D$ with $B=A+\rho_1 \mathbf{e_1}^\perp$ and $D=A+\rho_2 \mathbf{e_2}^\perp$
where $\mathbf{e_1}^\perp$ and $\mathbf{e_2}^\perp$ are the vectors respectively orthogonal to $\mathbf{e_1}$
and $\mathbf{e_2}$ with the same norm. The line $(AB)$ corresponds to $\xi_1$ and the line $(AD)$ to $\hat{\xi}_2$ in the previous notations.
Then the length of the basis $AB$ is $AB=\rho_1 \|\mathbf{e_1}^\perp\|$. On the other hand, the height $h$ of the parallelogram is given
by the distance between $B$ and its orthogonal projection $B_1$ on the line $(CD)$. In other words the height is given by
$$h=\frac{1}{\|\mathbf{e_1}\|} BB_1\cdot\mathbf{e}_1.$$
Now $BB_1\cdot \mathbf{e}_1=AB_1\cdot \mathbf{e}_1=\hat{\xi}_1 - \xi_1$ by definition of the two lines. Finally the area of the parallelogram $\Omega$,
that is $AB\times h$ is given by
$$|\Omega|=\rho_1 \|\mathbf{e_1^\perp}\| \frac{1}{\|\mathbf{e_1}\|}( \hat{\xi}_1 - \xi_1 )= \rho_1  \sqrt{\frac{2}{\Lambda}} 2\pi.$$
It remains to express $\rho_1$ taken into account the relation \eqref{hatxi}.
Let $B_2$ be the orthogonal projection of $B$ on the line $(AD)$. By definition we have $BB_2\cdot \mathbf{e_2}=-\hat{\xi}_2+\xi_2$.
Now 
$$\rho_1 \mathbf{e_1^\perp}\cdot \mathbf{e_2}=AB\cdot \mathbf{e_2}=B_2B\cdot \mathbf{e_2}=\hat{\xi}_2-\xi_2 .$$
Thus
$$\rho_1=\frac{(\hat{\xi}_2 - \xi_2)}{\mathbf{e_1}^\perp \cdot\mathbf{e_2}}= 
\sqrt{\frac{2}{\Lambda}} 2\pi \frac{\sqrt{\mu(\lambda+2\mu)}}{4}.$$
Therefore we have proved that the area of the parallelogram is given by
\begin{equation}\label{areaparall}
|\Omega|=\frac{2\pi^2 \sqrt{\mu(\lambda+2\mu)}}{\Lambda}.
\end{equation}
Let us rephrase this formula in stating the following theorem:
\begin{theorem}
Let $\Omega$ be a parallelogram defined by the four lines
$$\left\lbrace
 \begin{array}{l}
  \mathbf{e_1}\cdot X=\xi_1 \\
  \mathbf{e_1}\cdot X=\hat{\xi}_1
  \end{array} \right. \qquad 
  \left\lbrace
 \begin{array}{l}
  \mathbf{e_2}\cdot X=\xi_2 \\
  \mathbf{e_2}\cdot X=\hat{\xi}_2
  \end{array} \right. $$
  where 
 $$
 \mathbf{e_1}=\begin{pmatrix}
 \alpha \\
 \beta
 \end{pmatrix}\qquad  \mathbf{e_2}=\begin{pmatrix}
 \beta \\
 \alpha
 \end{pmatrix}
 $$
 and 
$$\alpha=\frac{1}{\sqrt{\lambda+2\mu}}-\frac{1}{\sqrt{\mu}} \quad \beta=\frac{1}{\sqrt{\lambda+2\mu}}+\frac{1}{\sqrt{\mu}}.$$
Assume that $\hat{\xi}_1-\xi_1=\hat{\xi}_2-\xi_2$.
Then an eigenvalue of the parallelogram is given by
\begin{equation}\label{eigparall}
\Lambda=\frac{2\pi^2 \sqrt{\mu(\lambda+2\mu)}}{|\Omega|}
\end{equation}
with an eigenvector $U$ of the form $U=(u,u)$ where
  \begin{equation}\label{formulau2}
 u(x,y)=\sin(\omega_1 (x+y)-\theta_1) - \sin(\omega_2 (x-y)-\theta_2)
 \end{equation}
 with
 $$\omega_1^2=\frac{\Lambda}{2\lambda+4\mu} \quad \omega_2^2=\frac{\Lambda}{2\mu}.$$
\end{theorem}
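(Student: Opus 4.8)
The plan is to argue by synthesis, reusing the computations carried out in the \textbf{Analysis} and \textbf{Synthesis} paragraphs that precede the statement. Since we only need to exhibit \emph{one} eigenpair, it suffices to: (1) verify that $U=(u,u)$, with $u$ given by \eqref{formulau2} and $\omega_1^2=\Lambda/(2\lambda+4\mu)$, $\omega_2^2=\Lambda/(2\mu)$, solves the Lamé equation in \eqref{pdeLame1} on all of $\R^2$, for every $\Lambda>0$ and every choice of the phases $\theta_1,\theta_2$; (2) choose $\Lambda$ and $\theta_1,\theta_2$ so that $u$ vanishes on the four sides of $\Omega$, so that $U\in H^1_0(\Omega)^2\setminus\{0\}$ is a genuine eigenfunction; and (3) record the resulting relation between $\Lambda$ and $|\Omega|$.

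For step (1), writing $w=x+y$ and $v=x-y$, a direct differentiation gives $\operatorname{div}U=\partial_xu+\partial_yu=2\omega_1\cos(\omega_1 w-\theta_1)$, hence $\nabla\operatorname{div}U=-2\omega_1^2\sin(\omega_1 w-\theta_1)(1,1)^\top$, while $\Delta u=-2\omega_1^2\sin(\omega_1 w-\theta_1)+2\omega_2^2\sin(\omega_2 v-\theta_2)$. Plugging these into \eqref{pdeLame1}, the vector $-\mu\Delta U-(\lambda+\mu)\nabla\operatorname{div}U$ has both components equal to $2(\lambda+2\mu)\omega_1^2\sin(\omega_1 w-\theta_1)-2\mu\omega_2^2\sin(\omega_2 v-\theta_2)$, which equals $\Lambda u$ by the choice $\omega_1^2=\Lambda/(2\lambda+4\mu)$, $\omega_2^2=\Lambda/(2\mu)$. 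This is a routine check, essentially the separation of variables already performed in \eqref{eqpsi}--\eqref{eqphi}.

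Step (2) is the crux. Using that $\sin A=\sin B$ holds precisely when $A-B\in 2\pi\mathbb{Z}$ or $A+B\in\pi+2\pi\mathbb{Z}$, the zero set of $u$ is exactly the two families of parallel lines listed in \eqref{charp}, with $a_1=\theta_1-\theta_2$ and $a_2=\theta_1+\theta_2$: the first family has normal $\mathbf{e_1}$ and consecutive lines correspond to an increment $2\pi\sqrt{2/\Lambda}$ of the quantity $\mathbf{e_1}\cdot X$; the second family has normal $\mathbf{e_2}$ with the same increment. Given the parallelogram, set $\delta:=\hat\xi_1-\xi_1=\hat\xi_2-\xi_2$ (the hypothesis) and choose $\Lambda:=8\pi^2/\delta^2$, so that the gap $2\pi\sqrt{2/\Lambda}$ equals $\delta$ for \emph{both} pairs of opposite sides at once; then one picks $a_1,a_2$ — equivalently $\theta_1=(a_1+a_2)/2$ and $\theta_2=(a_2-a_1)/2$ — so that each of the four lines $\mathbf{e_i}\cdot X\in\{\xi_i,\hat\xi_i\}$ belongs to $\{u=0\}$. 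Since $u$ is real-analytic on $\R^2$, it then vanishes on all of $\partial\Omega$, so $U=(u,u)\in H^1_0(\Omega)^2$, and $U\not\equiv0$ because $\omega_1\neq\omega_2$; hence $\Lambda$ is an eigenvalue with eigenfunction $U$. I expect this matching to be the only genuinely delicate point: one must see that a single scalar $\Lambda$ can be forced to fit both widths of $\Omega$, which is exactly what the rhombus hypothesis (cf. \eqref{hatxi}) provides, and one must also note that restricting the general ansatz \eqref{formulau} to equal amplitudes $C_1=C_2=1$ is legitimate here — it is precisely the case in which the zero set is a union of straight lines.

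Finally, step (3) is the area identity, already established before the statement: for a parallelogram cut out by the two pairs of parallel lines one has $|\Omega|=(\hat\xi_1-\xi_1)(\hat\xi_2-\xi_2)/|\det(\mathbf{e_1},\mathbf{e_2})|$, and with $\mathbf{e_1}=(\alpha,\beta)$, $\mathbf{e_2}=(\beta,\alpha)$ one computes $\det(\mathbf{e_1},\mathbf{e_2})=\alpha^2-\beta^2=(\alpha-\beta)(\alpha+\beta)=-4/\sqrt{\mu(\lambda+2\mu)}$. Substituting $\hat\xi_i-\xi_i=2\pi\sqrt{2/\Lambda}$ gives $|\Omega|=\delta^2\sqrt{\mu(\lambda+2\mu)}/4=2\pi^2\sqrt{\mu(\lambda+2\mu)}/\Lambda$, which rearranges to \eqref{eigparall} and completes the proof.
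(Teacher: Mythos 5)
Your proposal is correct and follows essentially the same route as the paper: you verify the ansatz $U=(u,u)$ against the Lamé system (the reverse of the paper's separation-of-variables derivation), identify the nodal set as the two families of lines in \eqref{charp} with spacing $2\pi\sqrt{2/\Lambda}$ in $\mathbf{e_i}\cdot X$, use the rhombus hypothesis to fit a single $\Lambda$ and the phases to the four boundary lines, and recover \eqref{eigparall}. The only cosmetic difference is that you compute $|\Omega|$ via the Jacobian $|\det(\mathbf{e_1},\mathbf{e_2})|=4/\sqrt{\mu(\lambda+2\mu)}$ rather than the paper's base-times-height argument, which yields the same identity.
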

\begin{remark}
In the above synthesis, we have studied the case $C_1=C_2$. We claim that in the case $C_1\not= C_2$ there are no (bounded)
domain $\Omega$ in the plane
such that 
$$u(v,w)=C_1 \sin(\omega_1 w-\theta_1) - C_2 \sin(\omega_2 v-\theta_2)=0\quad \mbox{on the boundary }\partial\Omega.$$
Indeed if we would have two level lines of the function $u(v,w)$ crossing at some point $A$, necessarily the gradient of $u$ must vanish
at $A$. That would provide the three relations
$$\left\lbrace
\begin{array}{l}
C_1 \sin(\omega_1 w-\theta_1) - C_2 \sin(\omega_2 v-\theta_2) = 0\\
C_1 \cos(\omega_1 w-\theta_1) = 0\\
C_2 \cos(\omega_2 v-\theta_2) = 0\\
\end{array}\right.$$
that are clearly incompatible since we can assume $C_1\not= 0$ and $C_2\not= 0$ for a bounded domain.
\end{remark}
\begin{remark}
If we look for domains for which the eigenvector is $U=(u(x,y), Au(x,y))$ for some real number $A$, following the same approach we get
other parallelograms but their eigenvalue is still given by the formula \eqref{eigparall}.
\end{remark}

\medskip
Let us come back to the possible minimality of the disk. We have seen in Theorem \ref{theonotdisk} that the disk cannot be a minimizer
if the Poisson coefficient is less than $\nu^*\simeq 0.349...$ but we were not able to conclude for larger values of the Poisson coefficient
(between $\nu^*$ and $0.5$) 
since we know  that in this case the first eigenvalue of the disk is simple and the disk is a local minimizer (at least in a weak sense).
Now our previous analysis allows us to increase the interval of values of the Poisson coefficient for which the disk is not optimal:
\begin{corollary}
Assume that the Poisson coefficient $\nu$ satisfies
$$\nu< \frac{j_{1,1}^4-8\pi^2}{2(j_{1,1}^4-4\pi^2)} \simeq 0.3879$$
then the disk is not a minimizer of $\Lambda$ (among sets of given volume).
\end{corollary}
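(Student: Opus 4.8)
The plan is to beat the disk by one of the rhombi constructed in Section~\ref{secrhombi}, comparing the scale‑invariant quantity $|\Omega|\Lambda(\Omega)$ (recall that $\Lambda(t\Omega)=\Lambda(\Omega)/t^2$, so this is exactly the functional to be minimized in $N=2$). First I would split off the range $\nu<\nu^*$, which is already settled by Theorem~\ref{theonotdisk}; so it remains to treat $\nu^*\le \nu<\frac{j_{1,1}^4-8\pi^2}{2(j_{1,1}^4-4\pi^2)}$. In that range Theorem~\ref{theopoisson} gives $\Lambda(\mathbb{B}_2)=\mu j_{1,1}^2$ for the unit disk, hence, since $|\mathbb{B}_2|=\pi$,
\[
|\mathbb{B}_2|\,\Lambda(\mathbb{B}_2)=\pi\mu j_{1,1}^2.
\]

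Next I would invoke the rhombi of Section~\ref{secrhombi}: for an admissible choice of the parameters (with $\hat\xi_1-\xi_1=\hat\xi_2-\xi_2$) one obtains a rhombus $\Omega$ carrying the eigenvalue \eqref{eigparall}, namely $\Lambda=2\pi^2\sqrt{\mu(\lambda+2\mu)}/|\Omega|$. Since the \emph{first} eigenvalue of $\Omega$ is no larger, this rhombus satisfies $|\Omega|\,\Lambda_1(\Omega)\le 2\pi^2\sqrt{\mu(\lambda+2\mu)}$. Consequently the disk fails to minimize $|\Omega|\Lambda$ as soon as
\[
2\pi^2\sqrt{\mu(\lambda+2\mu)}<\pi\mu j_{1,1}^2 ,
\]
which, after writing $\sqrt{\mu(\lambda+2\mu)}=\mu\sqrt{(\lambda+2\mu)/\mu}$, dividing by $\pi\mu$ and squaring, is equivalent to $4\pi^2\,\frac{\lambda+2\mu}{\mu}<j_{1,1}^4$. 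Note in particular that the Young modulus $E$ plays no role here, only the Poisson ratio does.

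Finally I would convert this into a condition on $\nu$ via the identity $\frac{\lambda+2\mu}{\mu}=\frac{2(1-\nu)}{1-2\nu}$ deduced from \eqref{lamepoi}: the inequality becomes $8\pi^2(1-\nu)<j_{1,1}^4(1-2\nu)$. Since $j_{1,1}^4>4\pi^2$ one has $2j_{1,1}^4-8\pi^2>0$, so solving for $\nu$ preserves the direction of the inequality and yields exactly $\nu<\frac{j_{1,1}^4-8\pi^2}{2(j_{1,1}^4-4\pi^2)}$, whose numerical value is $\simeq 0.3879$. Combined with Theorem~\ref{theonotdisk} for $\nu<\nu^*$, this covers the whole interval claimed. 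There is no genuine obstacle in this argument; the only points requiring care are the area normalisation $|\mathbb{B}_2|=\pi$ and the sign check $2j_{1,1}^4-8\pi^2>0$ used when inverting the inequality.
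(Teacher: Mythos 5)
Your argument is correct and follows essentially the same route as the paper: compare the area-$\pi$ rhombus of Section~\ref{secrhombi}, whose eigenvalue is $2\pi\sqrt{\mu(\lambda+2\mu)}$, with the disk value $\mu j_{1,1}^2$, and rewrite $4\pi^2(\lambda+2\mu)/\mu<j_{1,1}^4$ as the stated condition on $\nu$. Your explicit split at $\nu^*$ (where Theorem~\ref{theonotdisk} already applies) and the remark that the rhombus formula only bounds its first eigenvalue from above are just slightly more careful versions of what the paper leaves implicit.
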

\begin{proof}
According to Theorem \ref{theonotdisk}, it suffices to compare our previous parallelogram of area $\pi$ with the first 
eigenvalue of the disk that is $\mu j_{1,1}^2$. Thus we get the thesis as soon as
$2\pi  \sqrt{\mu(\lambda+2\mu)} < \mu j_{1,1}^2$. This is equivalent to $\frac{\lambda}{\mu} +2 < \frac{j_{1,1}^4}{4\pi^2}$.
Now using the relation between the Lam\'e coefficents and the Poisson coefficient, we know that $\lambda/\mu = 2\nu /(1-2\nu)$.
Therefore
$$\frac{\lambda}{\mu} +2 < \frac{j_{1,1}^4}{4\pi^2} \Leftrightarrow 8\pi^2(1-\nu) < j_{1,1}^4 (1-2\nu) \Leftrightarrow
\nu< \frac{j_{1,1}^4-8\pi^2}{2(j_{1,1}^4-4\pi^2)}.$$
\end{proof}
\subsection{Rectangles}\label{secrectangle}
Now in the range $\frac{3}{8} \leq \nu \leq \frac{2}{5}$, that corresponds to $a=1/(1-2\nu)$ in the range $[4,5]$, we are going to consider 
convenient rectangles. Note that $3/8 <0.38$, therefore we will be able to cover the whole range $\nu \in (-1,0.4]$ and prove the disk is not
optimal in this range with these different arguments.

We consider a rectangle $\Omega_L=(0,L)\times (0,\ell)$ of area $\pi$. It will be useful to write the length and the width of the rectangle as
$$L=\sqrt{\frac{\pi}{t}}\quad\mbox{ and }\quad \ell=\sqrt{t\pi},\qquad t\in (0,1].$$
Let us denote by $\varphi_1$ the first (normalized) eigenfunction for the Dirichlet-Laplacian of $\Omega_L$, defined by
$$\varphi_1(x,y)=\frac{2}{\sqrt{\pi}}\,\sin\left(\frac{\pi x}{L}\right) \sin\left(\frac{\pi y}{\ell}\right),$$
and another eigenfunction
$$\varphi_2(x,y)=\frac{2}{\sqrt{\pi}}\,\sin\left(\frac{2\pi x}{L}\right) \sin\left(\frac{2\pi y}{\ell}\right).$$
This other eigenfunction could be the fourth one (for a rectangle not too far from the square), but can also have a larger index.
We will explain below why we do this particular choice.

Now the idea is to plug in the variational formulation defining $\Lambda(\Omega_L)$ a family of vectors, for 
$X=(\alpha_1,\alpha_2,\beta_1,\beta_2)$:
$$U_X=\left(\begin{array}{c}
u_1\\
u_2
\end{array}\right)=\left(\begin{array}{c}
\alpha_1 \varphi_1 + \alpha_2 \varphi_2\\
\beta_1 \varphi_1 + \beta_2 \varphi_2\\
\end{array}\right).$$
Since the eigenfunctions of the Laplace operator define an orthonormal basis, we have
$$\int_{\Omega_L}  |\nabla u_1|^2 +|\nabla u_2|^2 =\left(\frac{\pi^2}{L^2}+\frac{\pi^2}{\ell^2}\right)\left(\alpha_1^2+4\alpha_2^2+\beta_1^2+4\beta_2^2 \right)$$
and 
$$\int_{\Omega_L} u_1^2 + u_2^2 =\alpha_1^2+\alpha_2^2+\beta_1^2+\beta_2^2.$$
It remains to compute $\int_{\Omega_L} (\mathrm{div}(U_X))^2$. We obtain
\begin{equation}
\int_{\Omega_L} (\mathrm{div}(U_X))^2=\frac{\pi^2}{L^2}\left(\alpha_1^2+4\alpha_2^2\right) + \frac{\pi^2}{\ell^2}\left(\beta_1^2+4\beta_2^2\right)
- \frac{128}{9\pi}\left(\alpha_1 \beta_2 + \alpha_2 \beta_1\right).
\end{equation}
Using $a=1/(1-2\nu)$, this implies using this admissible test function, that
\begin{equation}\label{ratioQ}
\frac{\Lambda(\Omega_L)}{\mu} \leq \frac{Q(X)}{\alpha_1^2+\alpha_2^2+\beta_1^2+\beta_2^2}
\end{equation}
where $Q$ is the quadratic form defined by
\begin{eqnarray*}
Q(X)=\alpha_1^2\left((1+a)\frac{\pi^2}{L^2} + \frac{\pi^2}{\ell^2}\right) + \alpha_2^2\left(4(1+a)\frac{\pi^2}{L^2} + \frac{4\pi^2}{\ell^2}\right) +
\beta_1^2\left(\frac{\pi^2}{L^2} + (1+a)\frac{\pi^2}{\ell^2}\right) + \\
 \beta_2^2\left(\frac{4\pi^2}{L^2} + 4(1+a)\frac{\pi^2}{\ell^2}\right) - \frac{128 a}{9\pi}\left(\alpha_1 \beta_2 + \alpha_2 \beta_1\right).
\end{eqnarray*}
Now we have to choose $X=(\alpha_1,\alpha_2,\beta_1,\beta_2)$ that give the lowest possible value for the ratio in \eqref{ratioQ}.
This lowest value exactly corresponds to the smallest eigenvalue of the $4\times 4$ matrix of the quadratic form $Q$.
This matrix $\mathcal{M}$ has the simple structure
$$\mathcal{M}=\left(\begin{array}{cccc}
a_1 & 0 & 0 & b\\
0 & a_2 & b & 0\\
0 & b & a_3 & 0\\
b & 0 & 0 & a_4\\
\end{array}\right) .$$
Its characteristic polynomial factorizes as
$P(x)=[(a_2-x)(a_3-x)-b^2][(a_1-x)(a_4-x)-b^2]$ with $b=-64a/9\pi$ and
$$\begin{array}{l}
a_1=\pi(1+a)t+\frac{\pi}{t }\\
a_2=4\pi(1+a)t+\frac{4\pi}{t}\\
a_3=\pi t+\frac{(1+a)\pi}{t }\\
a_4=4\pi t+\frac{4(1+a)\pi}{t } .\\
\end{array}$$
We observe that $a_2a_3=a_1a_4$ and $a_1+a_4 \geq a_2+a_3$ because $t\leq 1$. Therefore the trinome $(a_1-x)(a_4-x)-b^2$ is always less than
$(a_2-x)(a_3-x)-b^2$ and the smallest root of $P(x)$ is the smallest root of $q_1(a,t,x):=(a_1-x)(a_4-x)-b^2$.
More precisely, the question is to know whether the smallest root of $q_1$ is smaller than $j_{1,1}^2$ because our aim is to compare
the rectangle $\Omega_L$ with the unit disk.
Since $q_1(a,t,0)=a_1a_4-b^2$, we see that
$$q_1(a,t,0)\geq q_1(a,\frac{1}{2},0) = 4\pi^2\left(a^2+4a+4-\frac{1024 a^2}{81\pi^4}\right) >0.$$
Therefore, we get the thesis as soon as we can find some $t^*\in (0,1]$ such that $q_1(a,t^*,j_{1,1}^2)<0$ for all $a\in [4,5]$.
It turns out that the particular choice $t^*=2/5=0.4$ achieves this aim. This is an elementary analysis to prove that the polynomial expression
$$
q_1(a,\frac{2}{5},j_{1,1}^2)=j_{1,1}^4 - \pi j_{1,1}^2 \left(\frac{29}{2} + \frac{52 a}{5}\right) 
+ 4\pi^2 \left(a^2+\frac{169}{36}\,(a+1)\right) - \frac{4096 a^2}{81\pi^2 }
$$
remains negative for all $a\in [4,5]$. Thus we have proved
\begin{theorem}
Let $\Omega_L$ be the rectangle of length $L=\sqrt{5\pi/2}$ and width $\ell=\sqrt{2\pi/5}$. Then its first eigenvalue satisfies
$$\Lambda(\Omega_L) < \mu j_{1,1}^2$$
for all values of the Poisson coefficient $\nu \in [\frac{3}{8},\frac{2}{5}]$.
Therefore the disk is not a minimizer in this range of values of $\nu$.
\end{theorem}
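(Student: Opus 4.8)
The plan is to bound $\Lambda(\Omega_L)$ from above by inserting into the Rayleigh quotient \eqref{LambdaLame} a four-parameter family of vector test fields built from two Dirichlet--Laplacian eigenfunctions of $\Omega_L$; this reduces the statement to showing that the smallest eigenvalue of an explicit $4\times4$ matrix $\mathcal{M}$ lies strictly below $j_{1,1}^2$ for a well chosen rectangle, which in the end amounts to a short polynomial verification.

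First I would fix $\Omega_L=(0,L)\times(0,\ell)$ of area $\pi$ with $L=\sqrt{\pi/t}$, $\ell=\sqrt{\pi t}$ for $t\in(0,1]$, and take $U_X=(\alpha_1\varphi_1+\alpha_2\varphi_2,\ \beta_1\varphi_1+\beta_2\varphi_2)$, where $\varphi_1$ is the first Dirichlet eigenfunction and $\varphi_2$ is the $(2,2)$-mode $\tfrac{2}{\sqrt\pi}\sin(2\pi x/L)\sin(2\pi y/\ell)$. Since $\varphi_1,\varphi_2$ correspond to distinct Dirichlet eigenvalues, they are pairwise orthogonal both in $L^2(\Omega_L)$ and for the Dirichlet inner product, so both $\int_{\Omega_L}|\nabla U_X|^2$ and $\int_{\Omega_L}|U_X|^2$ are diagonal in $X$; the only term coupling the two components is $\int_{\Omega_L}(\operatorname{div}U_X)^2$, whose mixed part is controlled by $\int_{\Omega_L}\partial_x\varphi_1\,\partial_y\varphi_2$ and $\int_{\Omega_L}\partial_x\varphi_2\,\partial_y\varphi_1$. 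The reason for choosing $\varphi_2$ to be the $(2,2)$-mode is precisely that these two integrals do not vanish (each equals $-64/(9\pi)$ after using $L\ell=\pi$), producing the coupling $-\tfrac{128}{9\pi}(\alpha_1\beta_2+\alpha_2\beta_1)$ in the quadratic form $Q$. Dividing by $\mu$ and using $\tfrac{\lambda+\mu}{\mu}=a:=1/(1-2\nu)$, this yields that $\Lambda(\Omega_L)/\mu$ is at most the smallest eigenvalue of the $4\times4$ matrix $\mathcal{M}$ displayed above with $b=-64a/(9\pi)$ and entries $a_1,\dots,a_4$ as listed; its characteristic polynomial factors as $P(x)=q_1(a,t,x)\,q_2(a,t,x)$ with $q_1(a,t,x)=(a_1-x)(a_4-x)-b^2$ and $q_2(a,t,x)=(a_2-x)(a_3-x)-b^2$.

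Next I would reduce everything to a scalar inequality. The quadratic $q_1(a,t,\cdot)$ is monic and $q_1(a,t,0)=a_1a_4-b^2>0$ (the $a_i$ are positive and $b$ is small, e.g. $q_1(a,t,0)\ge q_1(a,\tfrac12,0)>0$), so it has a smallest root $r_1=r_1(a,t)>0$, and $P(r_1)=0$ forces $r_1\in\operatorname{spec}(\mathcal{M})$. Hence it suffices to exhibit a single $t^\ast\in(0,1]$ with $q_1(a,t^\ast,j_{1,1}^2)<0$ for every $a\in[4,5]$: then $\Lambda(\Omega_L)/\mu\le r_1(a,t^\ast)<j_{1,1}^2$ for all $\nu\in[\tfrac38,\tfrac25]$, since $a=1/(1-2\nu)$ ranges over $[4,5]$ there (note this bypasses identifying which factor carries the least eigenvalue). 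I would take $t^\ast=2/5$; then $q_1(a,2/5,j_{1,1}^2)$ is an explicit quadratic in $a$ whose leading coefficient $4\pi^2-\tfrac{4096}{81\pi^2}$ is positive, so it is convex in $a$ and negativity on $[4,5]$ follows from checking only the two endpoints $a=4$ and $a=5$, an elementary numerical verification using $j_{1,1}\simeq 3.8317$.

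The main obstacle is bookkeeping rather than ideas: carrying out $\int_{\Omega_L}(\operatorname{div}U_X)^2$ honestly (the only slightly delicate part being the two mixed integrals over the rectangle), and the final endpoint evaluation of $q_1(a,2/5,j_{1,1}^2)$. The two genuinely clever choices --- taking $\varphi_2$ to be the $(2,2)$-mode so that the divergence cross-term survives, and taking $t^\ast=2/5$ so that the induced quadratic in $a$ stays negative throughout $[4,5]$ --- are what make the estimate succeed. Finally, since $\tfrac38<0.3879$, this rectangle result overlaps the rhombi corollary; combining it with Theorem~\ref{theonotdisk} and that corollary shows that the disk fails to minimize $\Lambda$ for every $\nu\le 0.4$.
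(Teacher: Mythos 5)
Your proposal is correct and follows essentially the same route as the paper: the same test family built from the $(1,1)$ and $(2,2)$ Dirichlet modes, the same quadratic form and $4\times 4$ matrix with coupling $-\tfrac{128a}{9\pi}(\alpha_1\beta_2+\alpha_2\beta_1)$, the same factor $q_1(a,t,x)=(a_1-x)(a_4-x)-b^2$, and the same choice $t^*=2/5$, with $a=1/(1-2\nu)\in[4,5]$. Your two streamlinings (not identifying which factor of the characteristic polynomial carries the least eigenvalue, and reducing the negativity of $q_1(a,2/5,j_{1,1}^2)$ on $[4,5]$ to the endpoints via convexity in $a$) are sound; just carry out the endpoint arithmetic carefully, since the constant term is $4\pi^2\bigl(a^2+\tfrac{841}{100}(a+1)\bigr)-\tfrac{4096a^2}{81\pi^2}$ and at $a=5$ the value of $q_1$ is only about $-0.8$.
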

\begin{remark}
Let us explain why we chose the association of $\varphi_1$ and $\varphi_2$ as test functions. The aim is to get a cross product
coming from the divergence term strong enough to make the first eigenvalue of the matrix $\mathcal{M}$ as small as possible.
It turns out that a choice of the two first eigenfunctions of the rectangle would not realize this and a simple analysis convince us
that our choice was the better.
\end{remark}
\subsection{The case of ellipses}
In the case where the domain $\Omega$ is an ellipse, we do not have an explicit expression for the eigenvalues 
(nor a good upper estimate)  and eigenfunctions,
as we did in Sections~\ref{secrhombi} and \ref{secrectangle}. Therefore, we have aimed to extend the previous analysis using numerical simulations, in which we computed an estimate of the eigenvalue $\Lambda(\Omega_a)$, where $\Omega_a$ denotes the ellipse with semi-axes $a$ and $1/a$. For \( a = 1 \), this corresponds to the eigenvalue of the disk, equal to \( \mu j_{1,1}^2 \) as long as \( \nu \geq 0.35 \), as stated in Theorem~\ref{theopoisson}. Numerical observations summarized on Figure~\ref{fig:caseEllipses}, performed with the software Matlab, suggest that the disk is not optimal while \( \nu \leq \bar{\nu} \), and it appears to be optimal among  ellipses with area $\pi$ when \( \nu > \bar{\nu} \) where $\bar{\nu}$ is a numerical value
very close to $0.41$.

\begin{figure}[htbp]
    \centering
    \subfigure[Case $\nu=0.39$]{\includegraphics[width=0.32\textwidth]{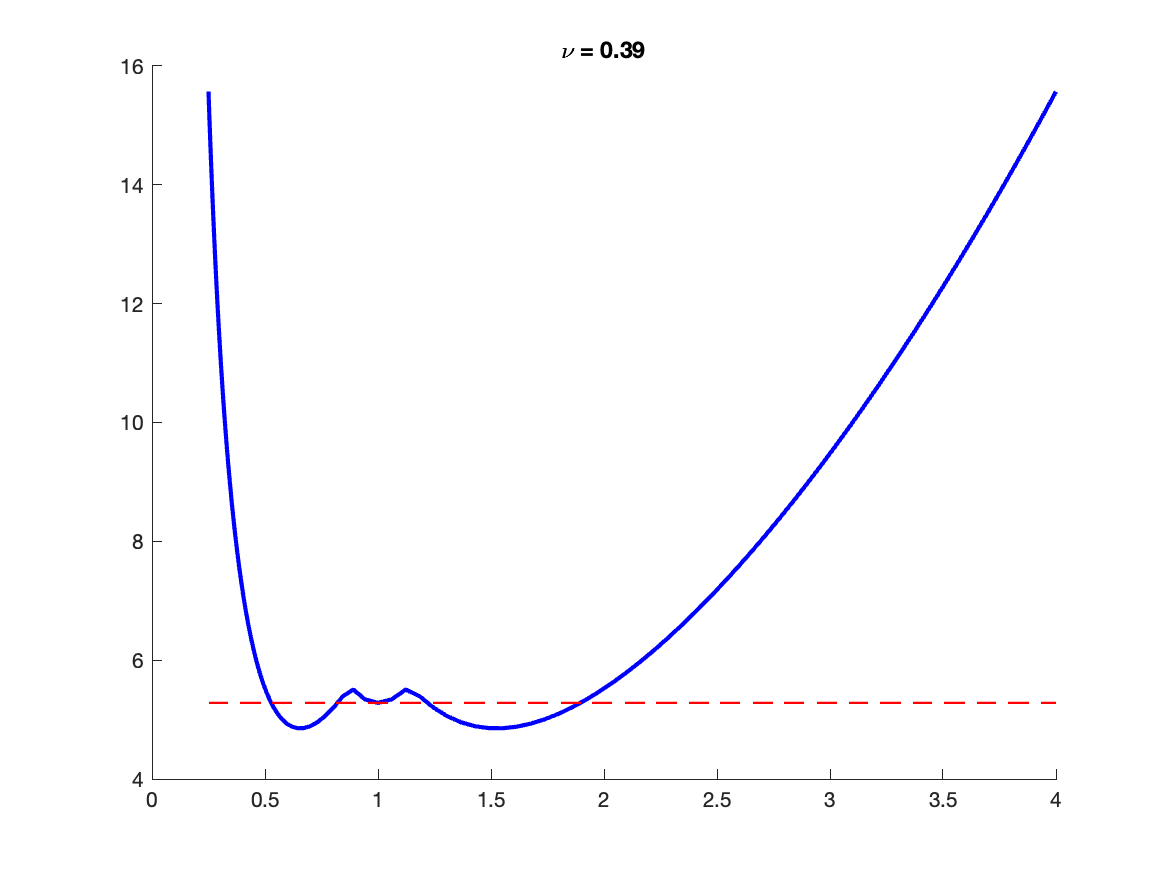}}
    \subfigure[Case $\nu=0.40$]{\includegraphics[width=0.32\textwidth]{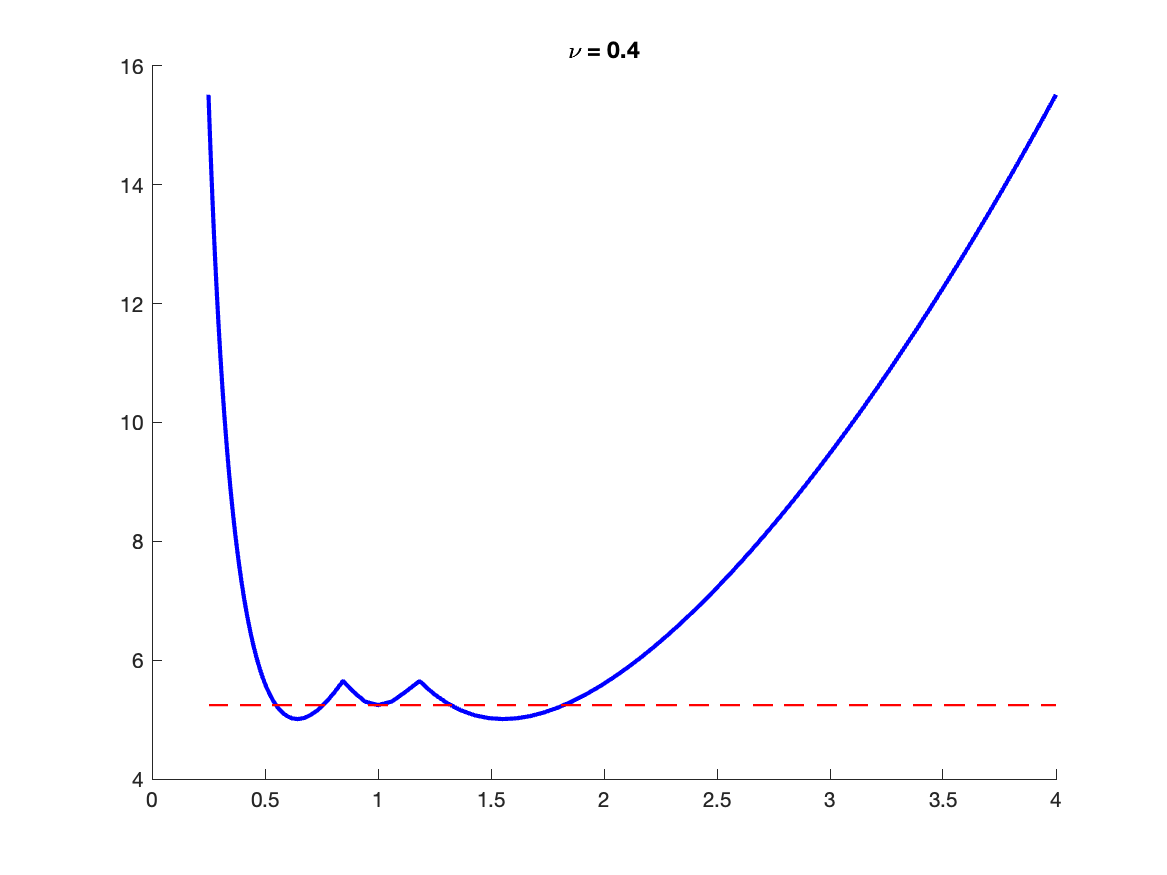}}
    \subfigure[Case $\nu=0.405$]{\includegraphics[width=0.32\textwidth]{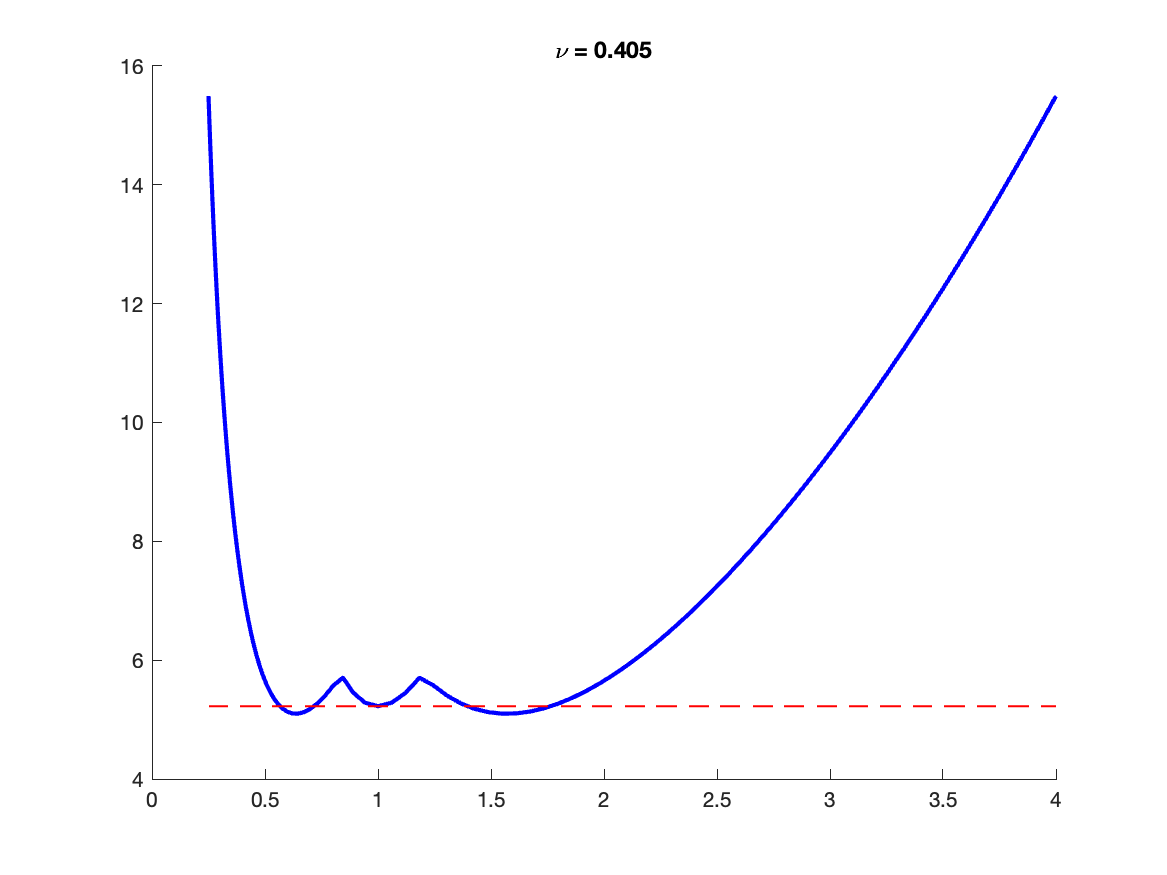}}\\
    \subfigure[Case $\nu=0.41$]{\includegraphics[width=0.32\textwidth]{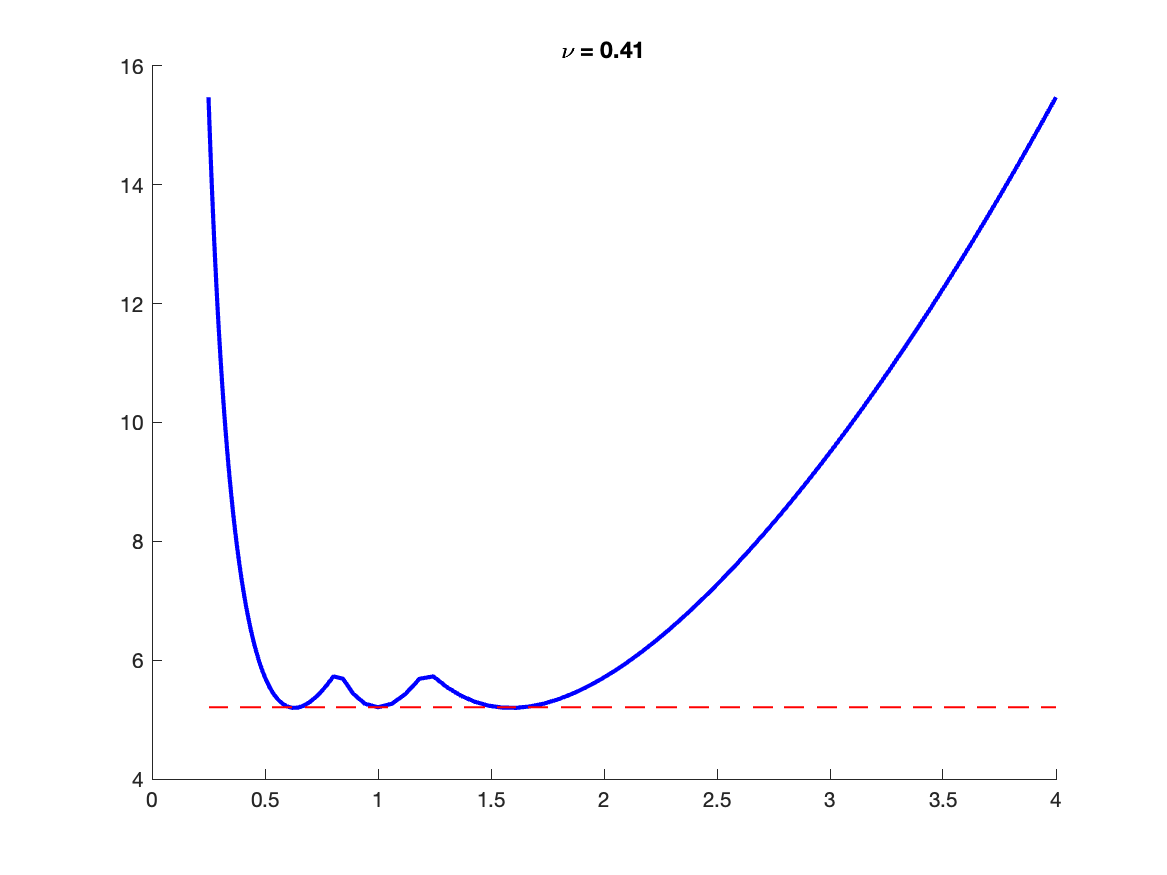}}
    \subfigure[Case $\nu=0.42$]{\includegraphics[width=0.32\textwidth]{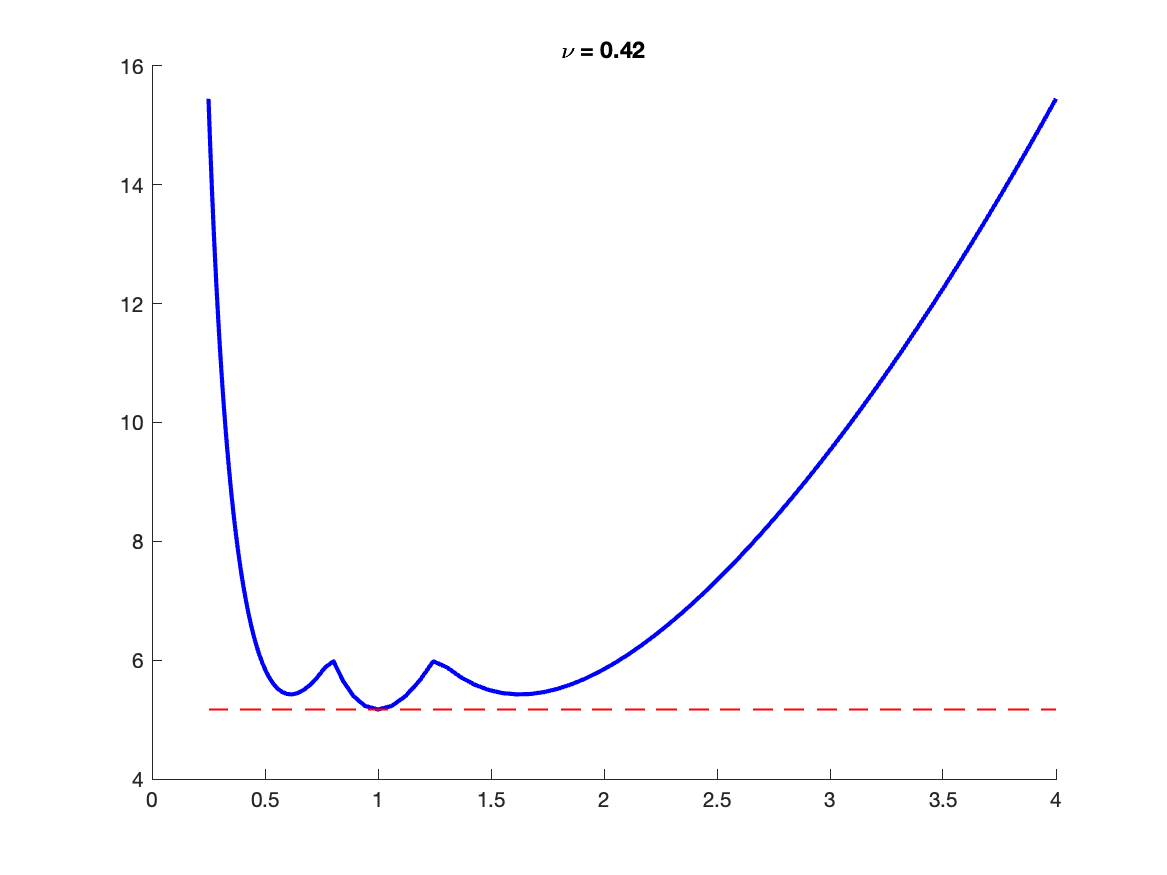}}
    \subfigure[Case $\nu=0.45$]{\includegraphics[width=0.32\textwidth]{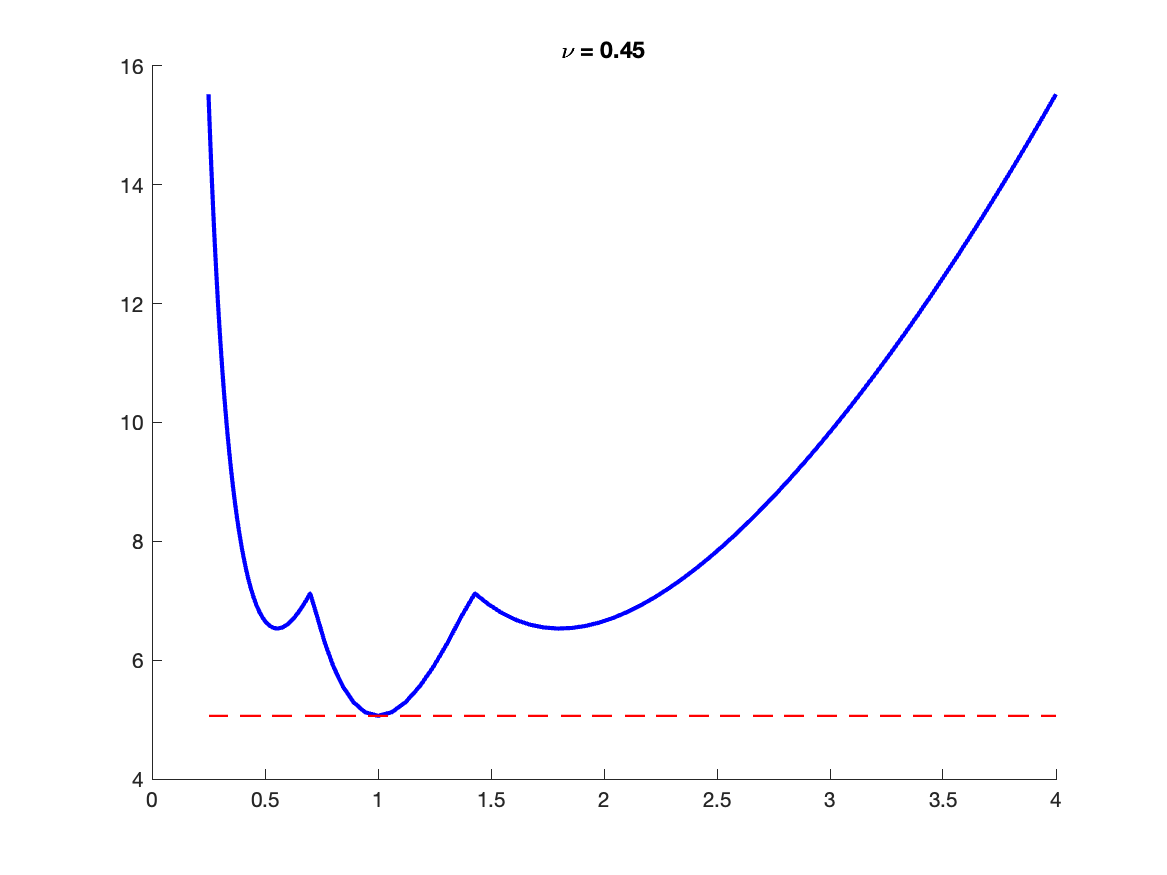}}
    \caption{Graph of $\Lambda(\Omega_a)$ with respect to $a$. The dotted line corresponds to the first eigenvalue of the disk.}
    \label{fig:caseEllipses}
\end{figure}

\section{Conclusion}\label{secconclusion}
\subsection{A conjecture}
In our numerical simulations, we are not able to give a lower first eigenvalue than the disk when $\nu \geq 0.41$.
For example, the best rectangles are better than the disk only when $\nu$ is less than a value not far from $0.41$. It is the same
for the best ellipse. This leads us to think that it may exist a threshold value $\hat{\nu}$ such that the disk becomes the 
solution of our minimization problem when $\nu \geq \hat{\nu}$. An heuristic argument that supports this conjecture is the following:
\begin{enumerate}
\item First we prove in Section~\ref{secgamma} below that the eigenvalues of the Lam\'e system converge to the eigenvalues of the Stokes system when
$\nu \to 1/2$.
\item If we assume that the disk minimizes the first Stokes eigenvalue in the plane (this is another conjecture as explained in \cite{henrot-mazari-privat}),
\item if we could then use the local minimality of the disk for our problem in a strong sense (for example for the Hausdorff convergence),
\end{enumerate}
we would get the result. Indeed, by the $\Gamma$-convergence result stated below, we see that the minimizer for the Lam\'e system must converge
to the minimizer for the Stokes system. Therefore, for $\nu$ close enough to $1/2$, the minimizer should enter in the neighborhood of the disk
where the disk is the solution. This could provide the expected result.
 \subsection{$\Gamma$-convergence as $\nu \to 1/2$}\label{secgamma}
As explained just above, it is interesting to prove that when $\nu\to 1/2$,    the eigenvalues of the Lam\'e
system converge to those of the Stokes system.

 For that purpose we will renormalize the eigenvalue  $\Lambda(\Omega)$ and work with the parameter $a:=\frac{\lambda+\mu}{\mu}=\frac{1}{1-2\nu}$  that we consider satisfying $a\to+\infty$. In other words  the right quantity to study becomes
 
 $$\Lambda^a(\Omega):=\frac{1}{\mu}\Lambda(\Omega)=\min_{u\in H^1_0(\Omega) \setminus \{0\}} \frac{\int_{\Omega}|\nabla u|^2 \;dx + a \int_{\Omega} ({\rm div}(u))^2\;dx}{\int_{\Omega} |u|^2 \;dx}.$$

 In this section we would like to investigate the limiting behavior as $a \to +\infty$ (or equivalently $\nu\to 1/2$) of $\Lambda^a(\Omega)$.  In particular we will show that for $\Omega$ fixed,  $\Lambda^{a}(\Omega)$ converges to the Stokes eigenvalue, and moreover under some standard geometrical restrictions on the admissible sets $\Omega$,  the shape functional $\Omega \mapsto \Lambda^a(\Omega)$ $\Gamma$-converges to $\Omega \mapsto \lambda_1^{\rm Stokes}(\Omega)$. Here,  $\lambda_1^{\rm Stokes}(\Omega)$ is the Stokes eigenvalue already defined in Section \ref{remark1} as 
  $$\lambda_1^{\rm Stokes}(\Omega):=\min_{u\in H^1_0(\Omega) \setminus \{0\} \text{ s.t. } {\rm div}(u)=0} \frac{\int_{\Omega}|\nabla u|^2 \;dx }{\int_{\Omega} |u|^2 \;dx}.
$$
 
We first establish the convergence of $\Lambda^a(\Omega)$ for $\Omega$, fixed. For this purpose we define the following two quadratic forms on $H^1_0(\Omega)$ :
$$
Q_a(u):=  \int_{\Omega}|\nabla u|^2 \;dx + a\int_{\Omega} ({\rm div}(u))^2\;dx.
$$

$$
Q_{\infty}(u):= 
\left\{
\begin{array}{cc}
 \int_{\Omega}|\nabla u|^2 \;dx  & \text{ if } {\rm div}(u)=0\\ 
+\infty  & \text{ otherwise}.
\end{array}\right.
$$

\begin{proposition}\label{gamma1} Let $\Omega$  be a bounded open set. Then $Q_a$ $\Gamma$-converges to $Q_\infty$ for 
the $L^2$ topology. when $a\to +\infty$  As a consequence, the associated Lam\'e operator converges in the strong resolvent sense to the Stokes operator and in particular
$$\lim_{a \to \infty} \Lambda^a (\Omega)= \lambda_1^{\rm Stokes}(\Omega).$$
\end{proposition}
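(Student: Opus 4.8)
The plan is to prove \emph{Mosco} convergence of the forms $Q_a$ to $Q_\infty$ in $L^2(\Omega)^N$, which is slightly stronger than the asserted $\Gamma$-convergence for the $L^2$-topology and which, by a classical theorem of Mosco, is equivalent to strong resolvent convergence of the associated self-adjoint operators (the renormalized Lam\'e operator converging to the Stokes operator). Here $Q_\infty$ is the closed nonnegative quadratic form attached to the Stokes operator, its form domain $\{u\in H^1_0(\Omega)^N:{\rm div}(u)=0\}$ being a closed subspace of $H^1_0(\Omega)^N$. Recall that Mosco convergence consists of two properties: (i) the inequality $\liminf_{a\to+\infty}Q_a(u_a)\geq Q_\infty(u)$ whenever $u_a\rightharpoonup u$ \emph{weakly} in $L^2(\Omega)^N$; and (ii) for every $u$, the existence of a recovery sequence $u_a\to u$ \emph{strongly} in $L^2(\Omega)^N$ with $\limsup_{a\to+\infty}Q_a(u_a)\leq Q_\infty(u)$. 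Since strong convergence implies weak convergence, (i) also yields the $\Gamma$-$\liminf$ for the strong topology, so both requirements of ordinary $\Gamma$-convergence follow from (i)--(ii).

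For (i), let $u_a\rightharpoonup u$ in $L^2(\Omega)^N$. If $\liminf_a Q_a(u_a)=+\infty$ there is nothing to prove, so after passing to a subsequence we may assume $Q_a(u_a)\leq C$ uniformly. Then $\int_\Omega|\nabla u_a|^2\leq Q_a(u_a)\leq C$, and since $\Omega$ is bounded the Poincar\'e inequality shows that $(u_a)$ is bounded in $H^1_0(\Omega)^N$; a further subsequence converges weakly in $H^1_0(\Omega)^N$ and, by the Rellich compact embedding, strongly in $L^2(\Omega)^N$, the limit being necessarily $u$, so $u\in H^1_0(\Omega)^N$. Moreover $\int_\Omega ({\rm div}(u_a))^2\leq Q_a(u_a)/a\leq C/a\to 0$, while ${\rm div}(u_a)\rightharpoonup {\rm div}(u)$ weakly in $L^2(\Omega)$ because ${\rm div}\colon H^1_0(\Omega)^N\to L^2(\Omega)$ is bounded, hence weakly continuous; uniqueness of weak limits forces ${\rm div}(u)=0$. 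Finally, by weak lower semicontinuity of the Dirichlet energy, $\int_\Omega|\nabla u|^2\leq\liminf_a\int_\Omega|\nabla u_a|^2\leq\liminf_a Q_a(u_a)$, i.e. $Q_\infty(u)\leq\liminf_a Q_a(u_a)$. Property (ii) is immediate: if $Q_\infty(u)=+\infty$ take $u_a\equiv u$; if $Q_\infty(u)<+\infty$ then $u\in H^1_0(\Omega)^N$ with ${\rm div}(u)=0$, and $u_a\equiv u$ again works since $Q_a(u)=\int_\Omega|\nabla u|^2=Q_\infty(u)$ for every $a$. This establishes Mosco (hence $\Gamma$-) convergence, and therefore the strong resolvent convergence of the Lam\'e operator to the Stokes operator.

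It remains to deduce $\Lambda^a(\Omega)\to\lambda_1^{\rm Stokes}(\Omega)$. I would not invoke strong resolvent convergence alone here, since it does not in general preserve the bottom of the spectrum; instead I would argue directly, exploiting monotonicity. The map $a\mapsto\Lambda^a(\Omega)=\min_{u\neq0}Q_a(u)/\|u\|_{L^2}^2$ is nondecreasing because $a\mapsto Q_a(u)$ is, and testing the Rayleigh quotient with divergence-free fields gives $\Lambda^a(\Omega)\leq\lambda_1^{\rm Stokes}(\Omega)$ (this is also \eqref{comp:StKorn}); hence $\Lambda^a(\Omega)\uparrow\ell$ for some $\ell\leq\lambda_1^{\rm Stokes}(\Omega)$. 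Let $u_a$ be a first eigenfunction of $Q_a$ normalized in $L^2$. Then $\int_\Omega|\nabla u_a|^2\leq Q_a(u_a)=\Lambda^a(\Omega)\leq\ell$, so along a subsequence $u_a\rightharpoonup u$ in $H^1_0(\Omega)^N$ and $u_a\to u$ in $L^2(\Omega)^N$, whence $\|u\|_{L^2}=1$; and $\int_\Omega({\rm div}(u_a))^2\leq\ell/a\to0$ yields ${\rm div}(u)=0$ exactly as above. Thus $u$ is admissible in the variational characterization of $\lambda_1^{\rm Stokes}(\Omega)$, so
$$
\lambda_1^{\rm Stokes}(\Omega)\leq\int_\Omega|\nabla u|^2\leq\liminf_a\int_\Omega|\nabla u_a|^2\leq\liminf_a Q_a(u_a)=\ell,
$$
and combined with $\ell\leq\lambda_1^{\rm Stokes}(\Omega)$ this gives $\ell=\lambda_1^{\rm Stokes}(\Omega)$; since the limit is independent of the subsequence, the whole sequence converges.

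The argument is essentially routine, the individual ingredients being the Poincar\'e inequality, the Rellich embedding, weak continuity of ${\rm div}$, and weak lower semicontinuity of the Dirichlet energy. The one point that genuinely requires care is the last step: one must resist the temptation to read off convergence of the first eigenvalue from strong resolvent convergence of the operators, and instead use the monotonicity of $a\mapsto\Lambda^a(\Omega)$ together with the compactness of $H^1_0(\Omega)^N\hookrightarrow L^2(\Omega)^N$ to extract a limiting divergence-free eigenfunction by hand.
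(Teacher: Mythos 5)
Your proof is correct. The two $\Gamma$-inequalities are established exactly as in the paper (constant recovery sequence using ${\rm div}(u)=0$; uniform gradient bound, Rellich compactness, weak continuity of the divergence, lower semicontinuity of the Dirichlet energy), except that you prove the liminf inequality along weakly $L^2$-convergent sequences, i.e.\ Mosco convergence, and then invoke Mosco's theorem for the strong resolvent convergence; the paper proves the liminf only for the strong $L^2$ topology and obtains the resolvent convergence by citing the abstract theory for equi-coercive quadratic forms (equi-coercivity coming from Poincar\'e--Korn), so the two formulations carry the same content here. Where you genuinely diverge is the convergence of the first eigenvalue: the paper deduces it from an abstract spectral-convergence result for equi-coercive, lower semicontinuous forms whose associated operators have compact resolvents (applied to the inverse operators), whereas you argue by hand, using the monotonicity of $a\mapsto\Lambda^a(\Omega)$, the elementary bound $\Lambda^a(\Omega)\le\lambda_1^{\rm Stokes}(\Omega)$ already noted in \eqref{comp:StKorn}, and a compactness extraction of a normalized divergence-free limit of the eigenfunctions. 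Your caution that strong resolvent convergence alone does not control the bottom of the spectrum is well placed: it only yields $\limsup_a\Lambda^a(\Omega)\le\lambda_1^{\rm Stokes}(\Omega)$, and the nontrivial lower bound is exactly what the paper's compact-resolvent hypothesis (and your direct argument) supplies. Your route buys a self-contained proof of the statement as written, at the price of giving only the first eigenvalue, while the paper's route yields convergence of the whole spectrum by appealing to external machinery. One cosmetic point: in the liminf step, pass first to a subsequence realizing the liminf before assuming $Q_a(u_a)\le C$; this standard adjustment does not affect the argument.
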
 

\begin{proof} The proof is somehow standard. Let us write the details. \\

\emph{Step 1. $\Gamma$-limsup}. Let  $u \in H^1_0(\Omega)$  be such that $Q_\infty(u)<+\infty$ (otherwise there is nothing to prove). Then we take as a recovery sequence the constant sequence $u_\lambda=u$ and  we use that  ${\rm div}(u)=0$, together with Korn inequality to  deduce that
$$Q_a(u)=Q_{\infty}(u)$$
and a fortiori,
$$\limsup_{a \to +\infty} Q_a(u)=Q_{\infty}(u),$$
which directly proves the limsup inequality.\\

\emph{Step 2. $\Gamma$-liminf}.  Assume that $u_a \to u$ in $L^2(\Omega)$.  We can assume that 
$$\sup_{a} Q_a(u_a) \leq C,$$
otherwise there is nothing to prove. But this means thanks to Korn inequality, that $u_a$ is uniformly bounded in $H^1(\Omega)$ thus converges weakly in $H^1$ and strongly in $L^2$, up to a subsequence, to some function $u \in H^1_0(\Omega)$ and
$$\int_{\Omega} |\nabla u|^2   \leq \liminf_{a} \int_{\Omega} | \nabla u_a|^2 \;dx.$$

 Passing to the liminf in the inequality
$$ \int_{\Omega} ({\rm div}(u))^2\;dx\leq \frac{C}{a},$$
we deduce that ${\rm div}(u )= 0$ thus   
$$Q_{\infty}(u)= \int_{\Omega} |\nabla u|^2 dx $$
which finishes the liminf inequality, and the proof of $\Gamma$-convergence.

Then the end of the statement of the Proposition follows from the standard theory of $\Gamma$-convergence that asserts that $\Gamma$-convergence of quadratic forms implies the convergence in the strong resolvent sense of the associated operators (see \cite[Chapter 12]{dalmasoGamma}). A review of these properties can also be found in \cite[Section 1.1]{PabloAntoine}.  In particular, the convergence of the eigenvalues  follows from the fact that the associated operators have compact resolvent. More precisely, we first notice that the quadratic forms $Q_a$ and $Q_\infty$ are equi-coercive thanks to Poincar\'e-Korn inequality. They are also semi-continuous with respect to the $L^2$ topology. The associated operators are thus self-adjoint, invertible and thanks to the compact embedding of $H^1_0(\Omega)$ into $L^2(\Omega)$, their inverse are compact operators.   We then apply Proposition 7 in \cite{PabloAntoine} with $X=H^1_0(\Omega)$ and $H=L^2(\Omega)$ which establishes the convergence of the spectrum for the inverse operators, from the $\Gamma$-convergence of $Q_a$ to $Q_\infty$. The spectrum of the operators itself then follows immediately.
\end{proof}

Now we consider the $\Gamma$-convergence of $\Lambda^a(\Omega)$ but  with respect to the variable  $\Omega$. For simplicity we will work in the restricted class of domains $\Omega$ that are uniformly Lipschitz, more precisely that satisfies a uniform $\varepsilon$-cone property (see \cite[Definition 2.4.1]{zbMATH06838450}). We will endow this class with the complementary Hausdorff distance (see \cite[Definition 2.2.8]{zbMATH06838450}) and we already know that the Dirichlet problem is stable along any such converging sequence in this class (\cite[Theorem 3.2.13]{zbMATH06838450}) which will help a lot in the following proposition.%In what follows we will work in a general class of domains $\mathcal{A}$ which is relatively compact with respect to $\gamma$-convergence. An example of such a class of domains is for instance the collection of all domains  $\Omega$ which are all contained in a box $D$ and all satisfying a uniform external $\varepsilon$-cone condition. We refer to \cite{henrotpierre} for the notion of $\gamma$-convergence.

\begin{proposition} Let $D\subset \R^N$, $\varepsilon_0>0$ and $V>0$, fixed, and let $\mathcal{A}_0$ be the class of domains $\Omega \subset D$ that satisfy the $\varepsilon_0$-cone property  together with the further constraint $|\Omega|=V$. Then as $a\to +\infty$, the family of functional $\Lambda^a :\mathcal{A} \to \R$, $\Gamma$-converges to $\lambda_1^{\rm Stokes}$ with respect to the  complementary Hausdorff distance. 
\end{proposition}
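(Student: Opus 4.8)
The plan is to establish the two standard inequalities defining $\Gamma$-convergence, namely the $\Gamma$-liminf and the $\Gamma$-limsup, working with the complementary Hausdorff distance on the class $\mathcal{A}_0$. The crucial background facts are: (i) Proposition~\ref{gamma1}, which gives the pointwise ($\Omega$ fixed) convergence $\Lambda^a(\Omega)\to\lambda_1^{\rm Stokes}(\Omega)$; and (ii) the stability of the Dirichlet problem along sequences converging in the complementary Hausdorff distance within a class satisfying a uniform $\varepsilon_0$-cone property, i.e. \cite[Theorem 3.2.13]{zbMATH06838450}: if $\Omega_n\to\Omega$ in this sense, then $H^1_0(\Omega_n)$ ``converges'' to $H^1_0(\Omega)$ in the sense of Mosco. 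In particular both $\lambda_1^D$ and $\lambda_1^{\rm Stokes}$ (and $\Lambda^a$ for each fixed $a$) are continuous for this topology on $\mathcal{A}_0$. One should also record the uniform bound: for $\Omega\in\mathcal{A}_0$, $\Lambda^a(\Omega)\le \lambda_1^{\rm Stokes}(\Omega)\le C(D,\varepsilon_0,V)$, and $\Lambda^a(\Omega)\ge \lambda_1^D(\Omega)\ge c(D,\varepsilon_0,V)>0$, which gives equi-coercivity.

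First I would prove the $\Gamma$-limsup inequality. Fix $\Omega\in\mathcal{A}_0$; we must produce a recovery sequence $\Omega_a\to\Omega$ with $\limsup_{a\to\infty}\Lambda^a(\Omega_a)\le \lambda_1^{\rm Stokes}(\Omega)$. The natural choice is the constant sequence $\Omega_a=\Omega$, for which the statement is exactly the convergence $\Lambda^a(\Omega)\to\lambda_1^{\rm Stokes}(\Omega)$ furnished by Proposition~\ref{gamma1}. (If one wanted $|\Omega_a|=V$ exactly this is automatic here since $\Omega$ already has volume $V$.) So the limsup part is essentially immediate from the fixed-domain result.

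The substantive part is the $\Gamma$-liminf inequality: given any sequence $\Omega_a\to\Omega$ in $\mathcal{A}_0$ for the complementary Hausdorff distance, show $\liminf_{a\to\infty}\Lambda^a(\Omega_a)\ge \lambda_1^{\rm Stokes}(\Omega)$. Let $u_a\in H^1_0(\Omega_a)$ be a normalized first eigenfunction, so $\|u_a\|_{L^2}=1$ and $\int_{\Omega_a}|\nabla u_a|^2 + a\int_{\Omega_a}(\operatorname{div}u_a)^2 = \Lambda^a(\Omega_a)$. Passing to a subsequence realizing the liminf, the uniform upper bound gives $\int|\nabla u_a|^2\le C$ and $\int(\operatorname{div}u_a)^2\le C/a\to 0$; extending $u_a$ by zero on $D$, it is bounded in $H^1_0(D)$, hence converges weakly in $H^1_0(D)$ and strongly in $L^2(D)$ to some $u$ with $\|u\|_{L^2}=1$. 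By the Mosco convergence $H^1_0(\Omega_a)\to H^1_0(\Omega)$ (from the $\varepsilon_0$-cone property and \cite[Theorem 3.2.13]{zbMATH06838450}), the weak $H^1$ limit $u$ belongs to $H^1_0(\Omega)$. Lower semicontinuity of $u\mapsto\int|\nabla u|^2$ under weak $H^1$ convergence gives $\int_\Omega|\nabla u|^2\le\liminf\int|\nabla u_a|^2$, while $\operatorname{div}u_a\to\operatorname{div}u$ weakly in $L^2$ combined with $\int(\operatorname{div}u_a)^2\to 0$ forces $\operatorname{div}u=0$ in $\Omega$. Hence $u$ is admissible in the Stokes Rayleigh quotient on $\Omega$, and
\[
\liminf_{a\to\infty}\Lambda^a(\Omega_a)\ \ge\ \liminf_{a\to\infty}\int|\nabla u_a|^2\ \ge\ \int_\Omega|\nabla u|^2\ \ge\ \lambda_1^{\rm Stokes}(\Omega),
\]
which is the desired inequality. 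Together with the limsup part this proves the $\Gamma$-convergence.

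The main obstacle is the $\Gamma$-liminf step, and within it the point that requires real care is ensuring the limit $u$ is actually admissible \emph{for $\Omega$}, i.e. $u\in H^1_0(\Omega)$ and not merely $u\in H^1_0(D)$ with support conditions: this is precisely where the uniform $\varepsilon_0$-cone hypothesis is used, via the stability/Mosco-convergence theorem for Dirichlet spaces along Hausdorff-complementary convergent sequences. A secondary technical point is checking that the volume constraint $|\Omega_a|=V$ passes to the limit $|\Omega|=V$ (true by the continuity of Lebesgue measure along $\varepsilon_0$-cone converging sequences, again from \cite{zbMATH06838450}), so that $\Omega$ indeed lies in $\mathcal{A}_0$ and $\lambda_1^{\rm Stokes}(\Omega)$ is the correct target value; and that the eigenfunction normalization survives the passage to the strong $L^2$ limit, which follows from the compact embedding $H^1_0(D)\hookrightarrow L^2(D)$.
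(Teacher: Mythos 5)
Your proposal is correct and follows essentially the same route as the paper: the constant recovery sequence combined with the fixed-domain convergence of Proposition~\ref{gamma1} for the $\Gamma$-limsup, and for the $\Gamma$-liminf the extraction of normalized eigenfunctions, uniform $H^1$ bounds, Mosco convergence of $H^1_0(\Omega_a)$ from the uniform $\varepsilon_0$-cone property, the vanishing of the divergence from the $C/a$ bound, and weak lower semicontinuity of the Dirichlet energy. The only cosmetic difference is that you justify boundedness of $\Lambda^a(\Omega_a)$ via a uniform comparison with $\lambda_1^{\rm Stokes}$, whereas the paper simply assumes it without loss of generality; both are fine.
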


\begin{proof} 
\emph{Step 1. $\Gamma$-limsup}. For $\Omega \in \mathcal{A}$ being given we take the constant sequence $\Omega_a=\Omega$ as a recovery sequence. Thanks to Proposition \ref{gamma1} we know that 
$$\lim_{a\to +\infty} \Lambda^a (\Omega)= \lambda_1^{\rm Stokes}(\Omega),$$
which proves the $\Gamma$-limsup property.\\

\emph{Step 2. $\Gamma$-liminf}.   Let $\Omega_a$ converging to $\Omega$ for the complementary Hausdorff distance. Since $\mathcal{A}$ is closed for the complementary Hausdorff convergence (see \cite[Theorem 2.4.10]{zbMATH06838450}), it follows that  $\Omega \in \mathcal{A}$, and it is easily seen that $|\Omega|=V$. 

Let  $u_a$ be a sequence of normalized eigenfunctions, associated to $\Lambda^a(\Omega_a)$. In particular, $u_a \in H^1_0(\Omega_a)$ and 
$$\Lambda^a(\Omega_a)= \int_{\Omega_\lambda}|\nabla u_a|^2 \;dx + a \int_{\Omega_a} ({\rm div}(u_a))^2\;dx.$$
We may assume without loss of generality that $(\Lambda^a(\Omega_a))_{a}$ is a bounded sequence. Therefore,   the sequence $u_a$ is uniformly bounded in $H^1_0(D)$ and converges up to a subsequence (not relabelled) to a function $u \in H^1_0(D)$, weakly in $H^1(D)$ and strongly in $L^2(D)$. In particular $\|u\|_{L^2(D)}=1$. Moreover by the Mosco convergence  of $\Omega_a$ (see \cite{zbMATH06838450}) we know that $u \in H^1_0(\Omega)$.  Finally, the bound on $\Lambda^a(\Omega_a)$ tells us that 
$$\int_{\Omega_a}({\rm div}(u_a))^2 \;dx \leq \frac{C}{a}\to_{a\to +\infty} 0,$$
and we deduce from the weak convergence of $u_a$ to $u$ in $H^1(D)$ that  ${\rm div}(u)=0$ thus $u$ is an admissible competitor for the Rayleigh quotient that defines $\lambda_1^{\rm Stokes}(\Omega)$.

Therefore, the following sequence of inequalities holds:
\begin{eqnarray}
\lambda_1^{\rm Stokes}(\Omega)&\leq &  \int_{\Omega }|\nabla u|^2 \;dx \notag \\
&\leq & \liminf  \int_{D}|\nabla u_a|^2 \;dx  \notag \\
&\leq &  \liminf  \int_{D}|\nabla u_a|^2 \;dx + a \int_{D} ({\rm div}(u_a))^2\;dx \notag \\
&= & \liminf  \Lambda^a(\Omega_a), \notag 
\end{eqnarray}
which finishes the proof of the liminf inequality, and so follows the proof of $\Gamma$-convergence.
\end{proof}

 \section*{Acknowledgments} This work  is partially supported by the ANR project STOIQUES financed by the French Agence Nationale de la Recherche (ANR).    We would like to warmly thank Michael Levitin  for valuable discussions and also David Krejcirik and Davide Buoso
 for interesting feedback on a preliminary version of the paper.

\bibliographystyle{abbrv}
\bibliography{biblio_korn}

\begin{thebibliography}{10}

\bibitem{PabloAntoine}
P.~{\'A}lvarez-Caudevilla and A.~Lemenant.
\newblock Asymptotic analysis for some linear eigenvalue problems via
  gamma-convergence.
\newblock {\em Adv. Differ. Equ.}, 15(7-8):649--688, 2010.

\bibitem{Bossel}
M.-H. Bossel.
\newblock Membranes \'elastiquement li\'ees: extension du th\'eor\`eme de
  {R}ayleigh-{F}aber-{K}rahn et de l'in\'egalit\'e de {C}heeger.
\newblock {\em C. R. Acad. Sci. Paris S\'er. I Math.}, 302(1):47--50, 1986.

\bibitem{briancon}
T.~Brian{\c{c}}on, M.~Hayouni, and M.~Pierre.
\newblock Lipschitz continuity of state functions in some optimal shaping.
\newblock {\em Calc. Var. Partial Differ. Equ.}, 23(1):13--32, 2005.

\bibitem{lamboleybri}
T.~Brian{\c{c}}on and J.~Lamboley.
\newblock Regularity of the optimal shape for the first eigenvalue of the
  {Laplacian} with volume and inclusion constraints.
\newblock {\em Ann. Inst. Henri Poincar{\'e}, Anal. Non Lin{\'e}aire},
  26(4):1149--1163, 2009.

\bibitem{Brock}
F.~Brock.
\newblock An isoperimetric inequality for eigenvalues of the {S}tekloff
  problem.
\newblock {\em ZAMM Z. Angew. Math. Mech.}, 81(1):69--71, 2001.

\bibitem{dorin1}
D.~Bucur.
\newblock Uniform concentration-compactness for {Sobolev} spaces on variable
  domains.
\newblock {\em J. Differ. Equations}, 162(2):427--450, 2000.

\bibitem{dorin3}
D.~Bucur.
\newblock Minimization of the {{\(k\)}}-th eigenvalue of the {Dirichlet}
  {Laplacian}.
\newblock {\em Arch. Ration. Mech. Anal.}, 206(3):1073--1083, 2012.

\bibitem{dorin2}
D.~Bucur.
\newblock Existence results.
\newblock In {\em Shape optimization and spectral theory}, pages 13--28.
  Berlin: De Gruyter, 2017.

\bibitem{mazzoleni}
D.~Bucur, D.~Mazzoleni, A.~Pratelli, and B.~Velichkov.
\newblock Lipschitz regularity of the eigenfunctions on optimal domains.
\newblock {\em Arch. Ration. Mech. Anal.}, 216(1):117--151, 2015.

\bibitem{zbMATH01482128}
D.~Bucur and N.~Varchon.
\newblock Global minimizing domains for the first eigenvalue of an elliptic
  operator with non-constant coefficients.
\newblock {\em Electron. J. Differ. Equ.}, 2000:10, 2000.
\newblock Id/No 36.

\bibitem{Buoso-Lamberti15}
D.~Buoso and P.~D. Lamberti.
\newblock On a classical spectral optimization problem in linear elasticity.
\newblock In {\em New trends in shape optimization}, pages 43--55. Cham:
  Birkh{\"a}user/Springer, 2015.

\bibitem{Cantarella_2000_Bis}
J.~Cantarella, D.~DeTurck, H.~Gluck, and M.~Teytel.
\newblock Isoperimetric problems for the helicity of vector fields and the
  {B}iot--{S}avart and curl operators.
\newblock {\em Journal of Mathematical Physics}, 41(8):5615--5641, Aug. 2000.

\bibitem{Cantarella_2000}
J.~Cantarella, D.~DeTurck, H.~Gluck, and M.~Teytel.
\newblock The spectrum of the curl operator on spherically symmetric domains.
\newblock {\em Physics of Plasmas}, 7(7):2766--2775, July 2000.

\bibitem{CFLV23}
M.~Capoferri, L.~Friedlander, M.~Levitin, and D.~Vassiliev.
\newblock Two-term spectral asymptotics in linear elasticity.
\newblock {\em The Journal of Geometric Analysis}, 33(8):242, 2023.

\bibitem{CDM21}
F.~Caubet, M.~Dambrine, and R.~Mahadevan.
\newblock Shape derivative for some eigenvalue functionals in elasticity
  theory.
\newblock {\em SIAM Journal on Control and Optimization}, 59(2):1218--1245,
  2021.

\bibitem{dalmasoGamma}
G.~Dal~Maso.
\newblock {\em An introduction to {{\(\Gamma\)}}-convergence}, volume~8 of {\em
  Prog. Nonlinear Differ. Equ. Appl.}
\newblock Basel: Birkh{\"a}user, 1993.

\bibitem{Daners}
D.~Daners.
\newblock A {F}aber-{K}rahn inequality for {R}obin problems in any space
  dimension.
\newblock {\em Math. Ann.}, 335(4):767--785, 2006.

\bibitem{david_toro}
G.~David and T.~Toro.
\newblock Regularity of almost minimizers with free boundary.
\newblock {\em Calc. Var. Partial Differ. Equ.}, 54(1):455--524, 2015.

\bibitem{zbMATH00194234}
E.~B. Davies.
\newblock {\em Heat kernels and spectral theory}, volume~92 of {\em Camb.
  Tracts Math.}
\newblock Cambridge etc.: Cambridge University Press, 1989.

\bibitem{depveli}
G.~De~Philippis and B.~Velichkov.
\newblock Existence and regularity of minimizers for some spectral functionals
  with perimeter constraint.
\newblock {\em Appl. Math. Optim.}, 69(2):199--231, 2014.

\bibitem{enciso2022optimal}
A.~Enciso, W.~Gerner, and D.~Peralta-Salas.
\newblock Optimal convex domains for the first curl eigenvalue in dimension
  three.
\newblock {\em Trans. Am. Math. Soc.}, 377(7):4519--4540, 2024.

\bibitem{enciso-peralta}
A.~Enciso and D.~Peralta-Salas.
\newblock Non-existence of axisymmetric optimal domains with smooth boundary
  for the first curl eigenvalue.
\newblock {\em Ann. Sc. Norm. Super. Pisa, Cl. Sci. (5)}, 24(1):311--327, 2023.

\bibitem{Faber1923}
G.~Faber.
\newblock Beweis, dass unter allen homogenen {M}embranen von gleicher
  {F}l{\"a}che und gleicher {S}pannung die kreisf{\"o}rmige den tiefsten
  {G}rundton gibt.
\newblock {\em Sitz. bayer. Akad. Wiss.}, pages 169--172, 1923.

\bibitem{Gerner23}
W.~Gerner.
\newblock Isoperimetric problem for the first curl eigenvalue.
\newblock {\em Journal of Mathematical Analysis and Applications},
  519(2):126808, Mar. 2023.

\bibitem{giaquinta}
M.~Giaquinta and L.~Martinazzi.
\newblock {\em An introduction to the regularity theory for elliptic systems,
  harmonic maps and minimal graphs}, volume~11 of {\em Appunti, Sc. Norm.
  Super. Pisa (N.S.)}.
\newblock Pisa: Edizioni della Normale, 2nd ed. edition, 2012.

\bibitem{henrot06}
A.~Henrot.
\newblock {\em {E}xtremum problems for eigenvalues of elliptic operators}.
\newblock Frontiers in Mathematics. Birkh\"auser Verlag, Basel, 2006.

\bibitem{henrot18}
A.~Henrot, editor.
\newblock {\em Shape optimization and spectral theory}.
\newblock Berlin: De Gruyter, 2017.

\bibitem{henrot-mazari-privat}
A.~Henrot, I.~Mazari, and Y.~Privat.
\newblock {Is the Faber-Krahn inequality true for the Stokes operator?}
\newblock {\em Calc. Var. Partial Differ. Equ.}, 63(9):38, 2024.
\newblock Id/No 228.

\bibitem{zbMATH06838450}
A.~Henrot and M.~Pierre.
\newblock {\em Shape variation and optimization. {A} geometrical analysis},
  volume~28 of {\em EMS Tracts Math.}
\newblock Z{\"u}rich: European Mathematical Society (EMS), 2018.

\bibitem{kawohl-sweers}
B.~Kawohl and G.~Sweers.
\newblock Remarks on eigenvalues and eigenfunctions of a special elliptic
  system.
\newblock {\em Z. Angew. Math. Phys.}, (38):730--740, 1987.

\bibitem{Krahn}
E.~Krahn.
\newblock \"uber {M}inimaleigenschaften der {K}ugel in drei und mehr
  {D}imensionen.
\newblock {\em Acta Comm. Univ. Dorpat.}, A9:1--44, 1926.

\bibitem{KLZ24}
D.~Krejcirik, P.~D. Lamberti, and M.~Zaccaron.
\newblock A note on the failure of the {F}aber-{K}rahn inequality for the
  vector laplacian.
\newblock https://arxiv.org/abs/2409.07206.

\bibitem{landau99}
L.~J. Landau.
\newblock Ratios of {Bessel} functions and roots of {{\(\alpha
  J_v(x)+xJ_v'(x)=0\)}}.
\newblock {\em J. Math. Anal. Appl.}, 240(1):174--204, 1999.

\bibitem{Lewicka-Muller}
M.~Lewicka and S.~M{\"u}ller.
\newblock On the optimal constants in {Korn}'s and geometric rigidity estimates
  in bounded and unbounded domains, under {Neumann} boundary conditions.
\newblock {\em Indiana Univ. Math. J.}, 65(2):377--397, 2016.

\bibitem{li-zhu23}
J.~Li and S.~Zhu.
\newblock Shape optimization of the {Stokes} eigenvalue problem.
\newblock {\em SIAM J. Sci. Comput.}, 45(2):a798--a828, 2023.

\bibitem{lions}
P.-L. Lions.
\newblock The concentration-compactness principle in the calculus of
  variations. {The} locally compact case. {I}.
\newblock {\em Ann. Inst. Henri Poincar{\'e}, Anal. Non Lin{\'e}aire},
  1:109--145, 1984.

\bibitem{zbMATH03656460}
F.~Mignot, F.~Murat, and J.~P. Puel.
\newblock Variation d'un point de retournement par rapport au domaine.
\newblock {\em Commun. Partial Differ. Equations}, 4:1263--1297, 1979.

\bibitem{filippo}
F.~Santambrogio.
\newblock {\em A course in the calculus of variations. {Optimization},
  regularity, and modeling}.
\newblock Universitext. Cham: Springer, 2023.

\bibitem{spigler}
R.~Spigler.
\newblock Sulle radici dell'equazione: {{\(AJ_\nu(x)+BxJ'_\nu(x)=0\)}}.
\newblock {\em Atti Semin. Mat. Fis. Univ. Modena}, 24:399--419, 1976.

\bibitem{Szego}
G.~Szeg{\H{o}}.
\newblock Inequalities for certain eigenvalues of a membrane of given area.
\newblock {\em J. Rational Mech. Anal.}, 3:343--356, 1954.

\bibitem{Weinberger}
H.~F. Weinberger.
\newblock An isoperimetric inequality for the {$N$}-dimensional free membrane
  problem.
\newblock {\em J. Rational Mech. Anal.}, 5:633--636, 1956.

\end{thebibliography}

 \end{document}